\title{Iteratively regularized Newton-type methods for general data misfit functionals and applications to Poisson data}
\author{Thorsten Hohage and Frank Werner \\[0.2cm]\small hohage@math.uni-goettingen.de, +49 (0)551 39 4509\\\small f.werner@math.uni-goettingen.de, +49 (0)551 39 12468\\[0.5cm]
\small Institute for Numerical and Applied Mathematics, University of G\"ottingen\\
\small Lotzestra\ss e 16-18\\
\small 37083 G\"ottingen}
\date{\today}
\newlength\fheight \newlength\fwidth
\DeclareMathOperator*{\argmin}{argmin}
\DeclareMathOperator{\err}{\mathbf{err}}
\newcommand{\KL}[2]{\mathbb{KL}\left(#2;#1\right)}
\newcommand{\EW}{\mathbf{E}}
\newcommand{\prob}{\mathbf{P}}
\newcommand{\Var}{\mathbf{Var}} 
\newcommand{\Xspace}{\mathcal{X}}
\newcommand{\Yspace}{\mathcal{Y}}
\newcommand{\Zspace}{\mathcal{Y}^{\mathrm{obs}}}
\newcommand{\gdelta}{g^{\mathrm{obs}}}
\newcommand{\calS}{\mathcal S}
\newcommand{\calT}{\mathcal T}
\renewcommand{\S}[2]{\calS\left(#2;#1\right)}
\newcommand{\Stil}[2]{\tilde{\calS}\left(#2;#1\right)}
\newcommand{\T}[2]{\calT\left(#2;#1\right)}
\newcommand{\offset}{\sigma}
\newcommand{\manifold}{\mathbb{M}}
\newcommand{\Cerr}{C_{\rm err}}
\newcommand{\Ctc}{C_{\rm tc}}
\newcommand{\Cdec}{C_{\rm dec}}
\newcommand{\Cbreg}{C_{\rm bd}}
\newcommand{\Nset}{\mathbb{N}}
\newcommand{\paren}[1]{\left( #1 \right)}
\newtheoremstyle{ass}{.1cm}{.1cm}{\normalfont}{}{\normalfont\bfseries}{:}{ }{\thmname{#1}\thmnumber{ #2}\thmnote{ (#3)}}
\newtheoremstyle{assa}{.1cm}{.1cm}{\normalfont}{}{\normalfont\bfseries}{:}{ }{\thmname{#1}\thmnumber{ #2A}\thmnote{ (#3)}}
\newtheoremstyle{assb}{.1cm}{.1cm}{\normalfont}{}{\normalfont\bfseries}{:}{ }{\thmname{#1}\thmnumber{ #2B}\thmnote{ (#3)}}
\newtheoremstyle{assp}{.1cm}{.1cm}{\normalfont}{}{\normalfont\bfseries}{:}{ }{\thmname{#1}\thmnote{ (#3)}}
\begin{document}
\newtheorem{thm}{Theorem}[section]
\newtheorem{lem}[thm]{Lemma}
\newtheorem{cor}[thm]{Corollary}
\newtheorem{rem}[thm]{Remark}
\newtheorem{example}[thm]{Example}
\renewcommand{\qedsymbol}{\rule{.2cm}{.2cm}}
\theoremstyle{ass}
\newtheorem{ass}{Assumption}
\theoremstyle{assa}
\newtheorem{assa}[ass]{Assumption}
\theoremstyle{assb}
\newtheorem{assb}[ass]{Assumption}
\theoremstyle{assp}
\newtheorem{assp}[ass]{Assumption $\mathcal P$}
\setlength{\parindent}{0cm}
\maketitle

\begin{abstract}
We study Newton type methods for inverse problems described by nonlinear operator equations $F(u)=g$ 
in Banach spaces where the Newton equations $F'(u_n;u_{n+1}-u_n) = g-F(u_n)$ are regularized variationally 
using a general data misfit functional and a convex regularization term. This generalizes the
well-known iteratively regularized Gauss-Newton method (IRGNM). We prove convergence and convergence rates
as the noise level tends to $0$ both for an a priori stopping rule and for a Lepski{\u\i}-type  
a posteriori stopping rule. 
Our analysis includes previous order optimal convergence rate results
for the IRGNM as special cases. The main focus of this paper 
is on inverse problems with Poisson data where the natural data misfit functional 
is given by the Kullback-Leibler divergence. Two examples of such problems are discussed in detail:
an inverse obstacle scattering problem with amplitude data of the far-field pattern 
and a phase retrieval problem. The performence of the proposed method for these problems
is illustrated in numerical examples. 
\end{abstract}
\pagestyle{plain}

\section{Introduction}\label{sec:intro}
This study has been motivated by applications in photonic imaging, e.g.\ 
po\-si\-tron emission tomography \cite{ksv85}, deconvolution problems in 
astronomy and microscopy \cite{bbdv09}, phase retrieval problems \cite{h89} or 
semi-blind deconvolution problems, i.e.\ deconvolution with partially unknown 
convolution kernel \cite{SBH:12}.  
In these problems, data consist of counts of photons which have interacted with the object
of interest. The inverse problem of recovering the information on the object of interest from such photon counts can be formulated as an operator equation
\begin{equation}\label{eq:opeq} 
F \left(u\right) = g
\end{equation}
if one introduces an operator $F:\mathfrak{B}\subset \Xspace\to\Yspace$ 
mapping a mathematical description $u\in\mathfrak{B}$ of the
object of interest to the photon density $g\in \Yspace\subset L^1(\manifold)$
on the manifold $\manifold$ at which measurements
are taken. In this paper we focus on problems where the operator $F$ is 
nonlinear. 

For fundamental physical reasons, photon count data are described by a Poisson
process with the exact data $g^{\dagger}$ as mean if read-out noise and finite averaging volume of detectors is neglected. 
Ignoring this a priori information often leads to non-competitive reconstruction
methods. 

To avoid technicalities in this introduction, let us consider a discrete 
version where the exact data vector $g^{\dagger}$ belongs to $[0,\infty)^J$,
and $g_j^{\dagger}$ is the expected number of counts of the $j$th detector.
Then the observed count data are described by a vector 
$\gdelta\in \mathbb{N}_0^J$ of $J$ independent
Poisson distributed random variables with mean $g^{\dagger}$. 
A continuous version will be discussed in section \ref{sec:kl}.
Since 
$-\ln \prob[\gdelta\vert g]=
-\ln\left(\prod_j e^{-g_j} g_j^{\gdelta_j}(\gdelta_j!)^{-1}\right)
= \sum_j[g_j - \gdelta_j\ln g_j] + c$ with a constant $c$ independent of $g$
(except for the special cases specified in eq.~\eqref{eq:SforPoisson}),  
the negative log-likelihood data misfit functional  is given by 
\begin{equation}\label{eq:SforPoisson}
\S{g}{\gdelta} := \begin{cases}
\sum\limits_{j=1}^J \left[g_j- \gdelta_j\ln g_j\right],&
g\geq 0 \mbox{ and }\{j:\gdelta_j>0, g_j=0\}= \emptyset,\\
\infty, &\mbox{else,}
\end{cases}
\end{equation}
using the convention $0\ln 0:=0$. 
Setting $\gdelta = g^{\dagger}$ and subtracting the  minimal value   
$\sum_{j=1}^J \left[g^{\dagger}_j- g^{\dagger}_j\ln g^{\dagger}_j\right]$ attained at $g = g^{\dagger}$, we obtain a discrete version of the \emph{Kullback-Leibler divergence} 
\begin{equation}\label{eq:defi_KL}
\KL{g}{g^{\dagger}}:=
\begin{cases}
\sum\limits_{j=1}^J \left[g_j- g^{\dagger}_j -g^{\dagger}_j\ln\left(\frac{g_j}{g^\dagger_j}\right)\right]\,&
g\geq 0,\,\,\{j:g^{\dagger}_j>0, g_j=0\}= \emptyset,\\
\infty, &\mbox{else\,.}
\end{cases}
\end{equation}
Note that both $\calS$ and $\mathbb{KL}$ are convex in their second arguments.

\bigskip
A standard way to solve perturbed nonlinear operator equations 
\eqref{eq:opeq} is the Gau{\ss}-Newton method. If 
$F'$ denotes the Gateaux derivative of $F$,  it is given by given by 
$u_{n+1}:=\argmin_{u \in \mathfrak{B}}
\|F \left(u_n\right)+F'\left(u_n;u-u_n\right)-\gdelta\|^2$. 
As explained above, for data errors with a non-Gaussian distribution 
it is in general not appropriate to use a squared norm as data 
misfit functional. 
Therefore, we will consider general data misfit functionals
$\calS : \Zspace \times \Yspace \to \left(-\infty,\infty\right]$ 
where  $\Zspace$ is a space of (possibly discrete) observations $\gdelta$. 

Since inverse problems are typically ill-posed in the sense that 
$F$ and its derivatives $F'(u_n;\cdot)$ do not have continuous inverses,
regularization has to be used. Therefore, we add a 
proper convex penalty functional ${\mathcal R} : \Xspace \to \left(-\infty, \infty\right]$, which should be chosen to incorporate
\textit{a priori} knowledge about the unknown solution $u^\dagger$. 
This leads to the iteratively regularized Newton-type method 
\begin{subequations}\label{eqs:method}
\begin{equation}\label{eq:NMgen}
u_{n+1} := \argmin\limits_{u \in \mathfrak{B}} \left[\S{F \left(u_n\right)+F'\left(u_n;u-u_n\right)}{\gdelta} + \alpha_n \mathcal R \left(u\right)\right]
\end{equation}
which will be analyzed in this paper. The regularization parameters 
$\alpha_n$ are chosen such that 
\begin{equation}\label{eq:parac}
\alpha_0 \leq 1, \qquad \alpha_n \searrow 0, \qquad 1 \leq \frac{\alpha_n}{\alpha_{n+1}} \leq \Cdec \qquad \text{for all} \qquad n \in \mathbb N
\end{equation}
\end{subequations}
for some constant $\Cdec$, typically $\alpha_n=\alpha_0 \Cdec^{-n}$
with $\Cdec=3/2$. 

If $\Yspace = \mathbb{R}^J$, $F(u) = (F_j(u))_{j=1,...,d}$, 
and $\calS$ is given by \eqref{eq:SforPoisson}, 
we obtain the convex minimization problems
\begin{equation}\label{eq:NewtonKL}
\begin{aligned}
u_{n+1} := &\argmin\limits_{u \in \mathfrak{B}_n}
 \Big[
\sum\limits_{j=1}^J \big[F_j\left(u_n\right)+F_j'\left(u_n;u-u_n\right)- \\
&\qquad \qquad - \gdelta_j 
\ln (F_j\left(u_n\right)+F_j'\left(u_n;u-u_n\right))\big]+ \alpha_n \mathcal R \left(u\right)\Big]
\end{aligned}
\end{equation}
in each Newton step where $\mathfrak{B}_n:=\{u\in\mathfrak{B}~\big|~ \S{F(u)+F'(u_n;u-u_n)}{\gdelta}<\infty\}$. 
In principle, several methods for the solution of \eqref{eq:NewtonKL} are 
available. In particular we mention inverse scale space methods 
\cite{obgxy05,bsb11} for linear operator equations and 
total variation penalties $\mathcal R$. 
EM-type methods cannot readily be used for the solution of the
convex minimization problems \eqref{eq:NewtonKL}  
(or subproblems of the inverse scale space method as in \cite{bsb11}) 
if $F'(u_n;\cdot)$ is not positivity preserving as in our examples. 
A simple algorithm for the solution of subproblems of the type 
\eqref{eq:NewtonKL} is discussed in section \ref{sec:applications}. 
We consider the design of more efficient algorithms for minimizing 
the functionals \eqref{eq:NewtonKL} 
for large scale problems as an important problem for future research. 

\smallskip

The most common choice of the data misfit functional is $\S{g}{\hat g} = \left\Vert g-\hat g\right\Vert_{\Yspace}^2$ with a  
Hilbert space norm $\|\cdot\|_{\Yspace}$. This can be motivated by 
the case of (multi-variate) Gaussian errors. 
 If the penalty term is also given by a Hilbert space norm $\mathcal{R}\left(u\right)=\left\Vert u-u_0\right\Vert_{\Xspace}^2$,  
\eqref{eqs:method} becomes the iteratively regularized Gauss-Newton method (IRGNM)
which is one of the most popular methods for solving nonlinear ill-posed operator equations
\cite{b92,bns97,bk04,kns08}.  If the penalty term $\left\Vert u-u_0\right\Vert_{\Xspace}^2$ 
is replaced by $\left\Vert u - u_n\right\Vert_{\Xspace}^2$ one obtains the Levenberg-Marquardt method, 
which is well-known in optimization and has first been analyzed as regularization method
in \cite{hanke:97a}. 
Recently, a generalization of the IRGNM to Banach spaces has been proposed and analyzed 
by Kaltenbacher \& Hofmann \cite{kh10}.

\bigskip

As an alternative to \eqref{eqs:method} we mention Tikhonov-type or 
variational regularization methods of the form
\begin{equation}\label{eq:nltikh}
\widehat{u}_{\alpha}:=\argmin_{u\in \mathfrak{B}}\left[\S{F(u)}{\gdelta} + \alpha \mathcal{R}\left(u\right) \right]\,.
\end{equation}
Here $\alpha>0$ is a regularization parameter. For nonlinear operators this is in general a non-convex 
optimization problem even if $\S{\cdot}{\gdelta}$ and $\mathcal R$ are convex. Hence, \eqref{eq:nltikh} may have many 
local minima and it cannot be guaranteed that the global minimum can be found numerically.
Let us summarize some recent convergence results on this method: 
Bardsley \cite{b10} shows stability and convergence for linear operators and $\calS =\mathbb{KL}$. Benning \& Burger \cite{bb11} prove rates of convergence for linear operators under the special source condition $F^*\omega \in \partial \mathcal{R}(u^{\dagger})$. Generalizations to nonlinear operators
and general variational source conditions were published simultaneously by
Bot \& Hofmann \cite{bh10}, Flemming \cite{f10}, and Grasmair 
\cite{grasmair:10}.

Given some rule to choose the stopping index $n_*$ our main results
(Theorems \ref{thm:cr} and \ref{thm:cradd}) establish rates 
of convergence of the method 
\eqref{eqs:method}, i.e.\ uniform estimates
of the error of the final iterate in terms of some data noise level
$\err$
\begin{equation}\label{eq:final_conv_rate}
\left\Vert u_{n_*} - u^\dagger\right\Vert \leq C\varphi(\err)
\end{equation}
for some increasing, continuous function $\varphi:[0,\infty)\to[0,\infty)$
satisfying $\varphi(0)=0$. 
For the classical deterministic error model $\|\gdelta-g\|\leq \delta$ and $\S{g}{\gdelta}=\|g-\gdelta\|^r$ with some $r\geq 1$ we have $\err = \delta^r$. 
In this case we recover most of the known convergence results on the 
IRGNM for weak source conditions. 
Our main results imply error estimates for Poisson data provided 
a concentration inequality holds true. In this case $\err = \frac{1}{\sqrt{t}}$ where $t$ can be interpreted as an exposure time proportional to  the 
expected total number of photons, and an estimate of the form 
\eqref{eq:final_conv_rate} holds true with the right hand side replaced by an expected error. 

As opposed to a Hilbert or Banach space setting our data misfit 
functional $\calS$ does not necessarily fulfill a triangle inequality. Therefore, it is necessary to use more
general formulations of the noise level and the tangential cone condition, which controls the degree of
nonlinearity of the operator $F$. Both coincide with the usual assumptions if $\calS$ is given by a norm.  
Our analysis uses variational methods rather than methods based on spectral theory, which have recently been studied in the context
of inverse problems by a number of authors (see, e.g., \cite{bo04,rs06,s08,hkps07,kh10}).

The plan of this paper is as follows: In the following section we formulate our first main convergence theorem
(Theorem \ref{thm:cr}) and discuss its assumptions. The proof will be given 
in section \ref{sec:convergence}. In the following section \ref{sec:lepskii} we discuss the case of additive variational inequalities and state a convergence rates result for a Lepski{\u\i}-type stopping rule (Theorem \ref{thm:cradd}). 
In section \ref{sec:spec} we compare our result to previous results on the iteratively regularized Gauss-Newton method. Section \ref{sec:kl} is devoted to the special case of Poisson data, which has been our main motivation.
We conclude our paper with numerical results for an inverse obstacle scattering problem
and a phase retrieval problem in optics in section \ref{sec:applications}.

\section{Assumptions and convergence theorem with a priori stopping rule}\label{sec:assumptions}

Throughout the paper we assume the following mapping and differentiability properties of the forward operator $F$:
\begin{ass}[Assumptions on $F$ and  $\mathcal R$]\label{ass:F}
Let $\Xspace$ and $\Yspace$ be Banach spaces and let $\mathfrak{B}\subset \Xspace$ a convex subset. \\[0.1cm]
Assume that the forward operator $F:\mathfrak{B}\to \Yspace$ and the penalty functional 
$\mathcal R: \Xspace \to \left(-\infty, \infty\right]$ have the following properties:
\begin{enumerate}
\item $F$ is injective.
\item $F:\mathfrak{B}\to \Yspace$ is continuous, the first variations
$F'(u;v-u):=\lim_{t\searrow 0} \frac{1}{t}(F(u+t(v-u))-F(u))$ exist for
all $u,v\in\mathfrak{B}$, and $h\mapsto F'(u;h)$ can be extended to a 
bounded linear operator $F'[u]\in  L(\Xspace,\Yspace)$ 
for all $u\in\mathfrak{B}$.
\item  $\mathcal{R}$ is proper and convex.
\end{enumerate}
\end{ass}
At interior points  $u \in \mathfrak B$ the second assumption amounts to 
Gateaux differentiability of $F$. 

\bigskip

To motivate our assumptions on the data misfit functional, let us consider the case that 
$\gdelta = F(u^{\dagger})+\xi$, and $\xi$ is Gaussian white noise on the
Hilbert space $\mathcal{Y}$, i.e.\ $\langle\xi,g\rangle\sim N(0,\|g\|^2)$ and
$\EW \langle \xi,g\rangle \,\langle\xi,\tilde{g}\rangle = 
\langle g, \tilde{g}\rangle$
for all $g,\tilde{g}\in\mathcal{Y}$. If $\Yspace=\mathbb{R}^J$, 
then the negative log-likelihood functional is given by 
$\S{g}{\gdelta} = \|g-\gdelta\|_{2}^2$. However, 
in an infinite dimensional Hilbert space $\Yspace$ we have $\|\gdelta\|_{\Yspace}=\infty$ almost surely, 
and $\S{\cdot}{\gdelta}\equiv \infty$ is obviously not a useful data misfit term. 
Therefore, one formally subtracts $\|\gdelta\|_{\Yspace}^2$ (which is independent of $g$) to obtain 
$\S{g}{\gdelta} := \left\Vert g\right\Vert_{\Yspace}^2 - 2 \left<\gdelta,g\right>_{\Yspace}$.
For exact data $g^{\dagger}$ we can of course use the data misfit functional $\T{g}{g^\dagger} = \left\Vert g-g^\dagger\right\Vert_{\Yspace}^2$. As opposed to $\calS$,  the functional $\calT$ is nonnegative and does indeed describe 
the size of the error in the data space $\Yspace$. 
It will play an important role in our analysis. 

It may seem cumbersome to work with two different types data misfit functionals $\calS$ and $\calT$, and a straightforward idea to fix the free additive constant in $\calS$ is to introduce $\Stil{g}{\gdelta}:= \S{g}{\gdelta}-\tilde{\mathfrak{s}}$ with 
$\tilde{\mathfrak{s}}:=\inf_g \S{g}{\gdelta}$. 
Then we obtain indeed that $\Stil{g}{g^{\dagger}}=\T{g}{g^{\dagger}}$. 
However, the expected error $\EW \big|\S{g}{\gdelta}-\mathfrak{s}-\T{g}{g^{\dagger}}\big|^2$ 
is not minimized for $\mathfrak{s} = \tilde{\mathfrak{s}}$,
but for 
$\mathfrak{s}= \EW \S{g}{\gdelta}-\T{g}{g^{\dagger}} = -\|g^{\dagger}\|^2$. 
Note that $\mathfrak{s}$ depends on the unknown $g^{\dagger}$, but this
does not matter since the value of $\mathfrak{s}$ does not affect the
numerical algorithms.  
For this choice of $\mathfrak{s}$ the error has the
convenient representation  
$\S{g}{\gdelta}+ \|g^{\dagger}\|^2-\T{g}{g^{\dagger}} = 
-2\langle \xi,g\rangle_{\Yspace}$.
Bounds on $\sup_{g\in\tilde{\mathcal{Y}}}\left|\langle \xi,g\rangle_{\Yspace}\right|$ with high probabilities 
for certain subsets $\tilde{\mathcal{Y}}\subset \mathcal{Y}$ (concentration 
inequalities) have been studied intensively in probability theory 
 (see e.g.~\cite{massart:07}). Such results can be used in case of Gaussian errors to show that
the following deterministic error assumption holds true with high probability
and uniform bounds on $\err(g)$ for $g\in\tilde{\mathcal{Y}}$. 

\begin{ass}[data errors, properties of $\calS$ and $\calT$]\label{ass:SR}
Let $u^\dagger \in \mathfrak B \subset \Xspace$ be the exact solution and denote by 
$g^\dagger := F\left(u^\dagger\right) \in \Yspace$ the exact data. Let $\Zspace$ be a set containing all possible observations and $\gdelta\in \Zspace$ 
the observed data. Assume that:
\begin{enumerate}
\item The fidelity term $\calT: F\left(\mathfrak{B}\right) \times \Yspace \to [0,\infty]$ with respect to exact data fulfills 
$\T{g^\dagger}{g^\dagger}=0$. 
\item $\calT$ and the fidelity term $\calS: \Zspace \times \Yspace \to (-\infty,\infty]$ with respect to noisy data are connected as follows: There exists a constant $\Cerr\geq 1$ and functionals $\err : \Yspace \to \left[0, \infty\right]$ and $\mathfrak{s}:F \left(\mathfrak B\right) \to (-\infty,\infty)$ such that 
\begin{subequations}\label{eq:err}
\begin{eqnarray}
\S{g}{\gdelta} - \mathfrak{s}(g^{\dagger})&\leq& \Cerr \T{g}{g^\dagger} + \Cerr\err \left(g\right)\\
\T{g}{g^\dagger} &\leq& 
\Cerr \paren{\S{g}{\gdelta} - \mathfrak{s}(g^{\dagger})}
+ \Cerr \err \left(g\right)
\end{eqnarray}
\end{subequations}
for all $g \in \Yspace$.
\end{enumerate}
\end{ass}
\begin{example}\label{ex:errors}
\begin{enumerate}
	\item \emph{Additive deterministic errors in Banach spaces.}
Assume that $\Zspace = \Yspace$, 
\[
\|\gdelta-g^{\dagger}\|\leq \delta,\qquad\mbox{and}\qquad 
\S{g_1}{g_2} = \T{g_1}{g_2}= \left\Vert g_1-g_2\right\Vert_{\Yspace}^r
\]
with $r\in\left[1,\infty\right)$. 
Then it follows from the simple inequalities $\left(a+b\right)^r \leq 2^{r-1}\left(a^r+b^r\right)$ and $\left|a-b\right|^r+b^r\geq 2^{1-r}a^r$
that \eqref{eq:err} holds true with $\err\equiv \left\Vert\gdelta-g^{\dagger}\right\Vert_{\Yspace}^r$, $\mathfrak{s} \equiv 0$ and $\Cerr = 2^{r-1}$.  
\item For randomly perturbed data 
a general recipe for the choice of $\mathcal{S}, \mathcal{T}$ and $\mathfrak{s}$
is to define $\mathcal{S}$ as the log-likelihood functional, 
$\mathfrak{s}(g^{\dagger}):= \EW_{g^{\dagger}} \S{g^{\dagger}}{\gdelta}$ and
$\T{g}{g^{\dagger}}:=\EW_{g^{\dagger}}\S{g}{\gdelta}-\mathfrak{s}(g^{\dagger})$. 
Then we always have $\T{g^{\dagger}}{g^{\dagger}}=0$, but part 2.
of Assumption \ref{ass:SR} has to be verified case by case. 
\item \emph{Poisson data.} For discrete Poisson data we have already seen
in the introduction that the general recipe of the previous point yields
$\calS$ given by \eqref{eq:SforPoisson}, $\calT = \mathbb{KL}$
and $\mathfrak{s}(g^{\dagger})=\sum_{j=1}^J \left[g^{\dagger}_j- g^{\dagger}_j\ln\left(g^{\dagger}_j\right)\right]$. It is 
easy to see that $\KL{g}{g^{\dagger}}\geq 0$ for all $g^{\dagger}$ and $g$.
Then \eqref{eq:err} holds true with $\Cerr = 1$ and
\[
\err(g) = \begin{cases}
\Big|\sum\limits_{j=1}^J \ln\left(g_j\right) \left(\gdelta_j-g^{\dagger}_j\right)\Big|,& g\geq 0, \{j:g_j=0,
g^{\dagger}_j+\gdelta_j>0\} = \emptyset\\
\infty,&\mbox{else\,.}
\end{cases}
\]
Obviously, it will be necessary to show that 
$\err \left(g\right)$ is finite and even small in some sense
for all $g$ for which the inequalities \eqref{eq:err} 
are applied (see section \ref{sec:kl}). 
\end{enumerate}
\end{example}

To simplify our notation we will assume in the following analysis 
that $\mathfrak{s}\equiv 0$ or equivalently replace $\S{g}{\gdelta}$
by $\S{g}{\gdelta}- \mathfrak{s}(g^{\dagger})$. 
As already mentioned in the motivation of Assumption \ref{ass:SR}, it is not relevant that $\mathfrak{s}(g^{\dagger})$ is unknown
since the value of this additive constant does not influence the iterates
$u_n$ in \eqref{eq:NMgen}.

Typically $\mathcal{S}$ and $\mathcal{T}$ will be convex in their second
arguments, but we do not need this property in our analysis. However,
without convexity it is not clear if the numerical solution of \eqref{eq:NMgen}
is easier than the numerical solution of \eqref{eq:nltikh}. 

\bigskip

\begin{ass}[Existence]\label{ass:ex}
For any $n \in \mathbb N$ the problem \eqref{eq:NMgen} has a solution.
\end{ass}
\begin{rem}
By standard arguments the following properties are sufficient to ensure existence of a solution to \eqref{eq:NMgen} for convex $\S{\cdot}{\gdelta}$ (see \cite{hkps07,p08,f10}):\\[0.1cm]
There are possibly weaker topologies $\tau_{\Xspace}$, $\tau_{\Yspace}$ on $\Xspace, \Yspace$ respectively such that
\begin{enumerate}
\item $\mathfrak B$ is sequentially closed w.r.t. $\tau_{\Xspace}$,
\item $F'\left(u;\cdot\right)$ is sequentially continuous w.r.t. $\tau_{\Xspace}$ and $\tau_{\Yspace}$ for all $u\in \mathfrak B$, 
\item the penalty functional $\mathcal R: \Xspace \to \left(-\infty, \infty\right]$ is sequentially lower semi-continuous with respect to $\tau_{\Xspace}$,
\item the sets $M_{\mathcal R}\left(c\right) := \left\{u \in \Xspace ~\big|~\mathcal R \left(u\right) \leq c\right\}$ are sequentially pre-compact with respect to $\tau_{\Xspace}$ for all $c \in \mathbb R$ and
\item for each $\gdelta$ the data misfit term $\S{\cdot}{\gdelta} : \Yspace \to \left(-\infty,\infty\right]$ is sequentially lower semi-continuous w.r.t. $\tau_{\Yspace}$.
\end{enumerate}
Note that for our analysis we do not require that the solution to \eqref{eq:NMgen} is unique or 
depends continuously on the data $\gdelta$ even though these properties are desirable
for other reasons. 
Obviously, uniqueness is given if $\calS$ is convex and $\mathcal R$ 
is strictly convex, and 
there are reasonable assumptions on $\calS$ which guarantee continuous dependence, cf.\ \cite{p08}.
\end{rem}

\bigskip

All known convergence rate results for nonlinear ill-posed problems under weak source conditions
assume some condition restricting the degree of nonlinearity of the operator $F$. 
Here we use a generalization of the tangential cone condition which was
introduced in \cite{HNS:95} and is frequently used for the analysis of regularization
methods for nonlinear inverse problems. It must be said, however, that for many
problems it is very difficult to show that this condition is satisfied
(or not satisfied). Since $\calS$ does not necessarily 
fulfill a triangle inequality we have to use a generalized formulation
of the tangential cone condition, which follows from the standard formulation
if $\calS$ is given by the power of a norm (cf.\ Lemma \ref{lem:tan_cone}).
\begin{ass}[Generalized tangential cone condition]\label{ass:nl}$\;$
\begin{subequations}\label{eqs:tc}
\begin{enumerate}
\item[(A)]
There exist constants $\eta$ (later assumed to be sufficiently small) and $\Ctc \geq 1$ such that
for all $\gdelta \in \Zspace$
\begin{align}
&\frac{1}{\Ctc} \S{F\left(v\right)}{\gdelta} - \eta \S{F\left(u\right)}{\gdelta} \nonumber\\[0.1cm]
\label{eq:tca}
\leq& \S{F \left(u\right)  + F'\left(u;v-u\right)}{\gdelta}\\[0.1cm]
\leq &\Ctc \S{F\left(v\right)}{\gdelta} + \eta \S{F\left(u\right)}{\gdelta}
\qquad \mbox{for all }u,v\in\mathfrak{B}.\nonumber
\end{align}
\item[(B)] There exist constants $\eta$ (later assumed to be sufficiently small) and $\Ctc \geq 1$ such that
\begin{align}
&\frac{1}{\Ctc} \T{F\left(v\right)}{g^{\dagger}} - \eta \T{F\left(u\right)}{g^{\dagger}} \nonumber\\[0.1cm]
\label{eq:tcb}
\leq& \T{F \left(u\right)  + F'\left(u;v-u\right)}{g^{\dagger}}\\[0.1cm]
\leq &\Ctc \T{F\left(v\right)}{g^{\dagger}}+ \eta \T{F\left(u\right)}{g^{\dagger}}
\qquad \mbox{for all }u,v\in\mathfrak{B}.\nonumber
\end{align}
\end{enumerate}
\end{subequations}
\end{ass}
This condition ensures that the nonlinearity of $F$ fits together with the data misfit functionals $\calS$ or $\calT$.
Obviously, it is fulfilled with $\eta = 0$ and $\Ctc = 1$ if $F$ is linear.

\bigskip
It is well-known that for ill-posed problems rates of convergence can only be obtained under an
additional ''smoothness condition'' on the solution 
(see \cite[Prop. 3.11]{ehn96}). 
In  a Hilbert space setting 
such conditions are usually formulated as source conditions in the form 
\begin{equation}\label{eq:sc}
u^{\dagger}-u_0 = \varphi\left(F'\left[u^\dagger\right]^*F'\left[u^\dagger\right]\right) \omega
\end{equation}
for some $\omega\in \mathcal{X}$ where $\varphi:[0,\infty)\to [0,\infty)$ is a so-called \emph{index function},
i.e.\ $\varphi$ is continuous and monotonically increasing with $\varphi(0)=0$. Such general source
conditions were systematically studied in \cite{hegland:95, MP:03}. 
The most common choices of $\varphi$
are discussed in section \ref{sec:spec}. 

To formulate similar source conditions in Banach spaces, we first have to introduce 
Bregman distances, which will also be used to measure the error of our approximate solutions
(see \cite{bo04}):
Let $u^* \in \partial \mathcal  R \left(u^\dagger\right)$ be a subgradient (e.g.\ $u^*=u^{\dagger}-u_0$
if $\mathcal{R}(u) = \frac{1}{2}\|u-u_0\|^2$ with a Hilbert norm $\|\cdot\|$). 
Then the Bregman distance of $\mathcal R$ between $u$ and $u^\dagger$ is given by 
\[
\mathcal D^{u^*}_{\mathcal R} \left(u, u^\dagger\right) := \mathcal R \left(u\right) - \mathcal R \left(u^\dagger\right) - \left<u^*, u-u^\dagger\right>.
\]
If $\Xspace$ is a Hilbert space and $\mathcal{R}(u) = \frac{1}{2}\|u-u_0\|^2$, 
we have $\mathcal D^{u^*}_{\mathcal R} \left(u, u^\dagger\right) 
= \frac{1}{2}\|u-u^{\dagger}\|^2$. Moreover, if $\Xspace$ is a $q$-convex Banach space ($1 < q \leq 2$) and $\mathcal R \left(u\right) = \left\Vert u \right\Vert^q$, then there exists a constant $\Cbreg >0$ such that
\begin{equation}\label{eq:lower_bound_breg}
\left\Vert u-u^\dagger\right\Vert^q \leq \Cbreg \mathcal D^{u^*}_{\mathcal R} \left(u, u^\dagger\right)
\end{equation}
for all $u \in \Xspace$ (see e.g. \cite{bkmss08}). In those cases, convergence rates w.r.t. the Bregman distance also imply rates w.r.t. the Banach space norm.

Now we can formulate the following variational formulation of the source condition \eqref{eq:sc},
which is a slight variation of the one proposed in \cite{kh10}:
\begin{assa}[Multiplicative variational source condition]\label{ass:sc}
There exists $u^*\in\partial \mathcal R \left(u^\dagger\right) \subset \Xspace'$, $\beta \geq 0$ and a concave index function $\varphi : \left(0,\infty\right) \to \left(0,\infty\right)$ such that
\begin{equation}\label{eq:scgen}
\left<u^*, u^\dagger-u\right> \leq \beta\mathcal D_{\mathcal R}^{u^*} \left(u, u^\dagger\right)^{\frac12} \varphi\left(\frac{\T{F\left(u\right)}{g^\dagger}}{\mathcal D_{\mathcal R}^{u^*} \left(u, u^\dagger\right)}\right)\qquad 
\mbox{for all }u \in \mathfrak B.
\end{equation}
Moreover, we assume that
\begin{equation}\label{eq:condvarphi}
t\mapsto \frac{\varphi\left(t\right)}{\sqrt{t}} \qquad\text{is monotonically decreasing}.
\end{equation}
\end{assa}
As noted in \cite{kh10} using Jensen's inequality, 
a Hilbert space source condition \eqref{eq:sc} for which
$\left(\varphi^2\right)^{-1}$ is convex
implies the variational inequality 
\begin{equation}\label{eq:schspace}
\left|\left<u^*, u-u^\dagger\right>\right| \leq \left\Vert \omega\right\Vert \left\Vert u-u^\dagger\right\Vert \varphi\left(\frac{\left\Vert F'\left[u^\dagger\right]\left(u-u^\dagger\right)\right\Vert^2}{\left\Vert u-u^\dagger\right\Vert^2}\right).
\end{equation}
The tangential cone condition now shows that an inequality of 
type \eqref{eq:scgen} is valid and hence, in a Hilbert space setup Assumption~\ref{ass:sc} is weaker than \eqref{eq:sc} at least for linear operators. As opposed to \cite{kh10} we have omitted
absolute values on the left hand side of \eqref{eq:scgen} since they are not
needed in the proofs, and this form may allow for better index functions $\varphi$
if $u^{\dagger}$ is on the boundary of $\mathfrak{B}$.

In many recent publications \cite{s08,bh10,hy10,f10} variational source conditions 
in additive rather than multiplicative form have been used. Such conditions will 
be discussed in section \ref{sec:lepskii}.

Since we use a source condition with a general index function $\varphi$, we need to restrict the nonlinearity of $F$ with the help of a tangential cone condition. Nevertheless, we want to mention that for $\varphi \left(t\right) = t^{1/2}$ in \eqref{eq:scgen} our convergence analysis also works under a generalized Lipschitz assumption, but this lies beyond the aims of this paper. The cases $\varphi \left(t\right) = t^\nu$ with $\nu > \frac12$ where similar results are expected are not covered by Assumption~\ref{ass:sc}, since for the motivation in the Hilbert space setup we needed to assume that $\left(\varphi^2\right)^{-1}$ is convex, which is not the case for $\nu > \frac12$. 

\smallskip

In our convergence analysis we will use the following two functions,
which are both index functions as well as their inverses:
\begin{align}\label{eqs:defiThetaf}
\begin{aligned}
\Theta \left(t\right) &:= t\varphi^2\left(t\right), \\
\vartheta\left(t\right) &:= \sqrt{\Theta\left(t\right)} = \sqrt{t}\varphi \left(t\right)
\end{aligned}
\end{align}
\medskip

We are now in a position to formulate our convergence result with a priori stopping rule:

\begin{thm}\label{thm:cr}
Let Assumption~\ref{ass:F}, \ref{ass:SR}, \ref{ass:ex}, \ref{ass:nl}A or \ref{ass:nl}B and \ref{ass:sc}A hold true, and  
suppose that 
$\eta$, $\mathcal D_{\mathcal R}^{u^*} \left(u_0, u^\dagger\right)$ and $\T{F\left(u_0\right)}{g^\dagger}$ are sufficiently small. 
Then the iterates $u_n$ defined by \eqref{eqs:method} with exact data $\gdelta = g^{\dagger}$ fulfill
\begin{subequations}\label{eq:cr_ex_data}
\begin{align}
\mathcal D^{u^*}_{\mathcal R} \left(u_n, u^\dagger\right) &= \mathcal O \left(\varphi^2 \left(\alpha_n\right)\right),\\
\T{F \left(u_n\right)}{g^\dagger} &= \mathcal O \left(\Theta \left(\alpha_n\right)\right)
\end{align}
\end{subequations}
as $n \to \infty$. For noisy data define 
\begin{subequations}\label{eq:defi_err}
\begin{align}\label{eq:defi_errnA}
\err_{n} := \frac{1}{\Cerr}\err\left(F\left(u_{n+1}\right)\right) + 2\eta \Ctc \err \left(F\left(u_n\right)\right)+\Ctc\Cerr\err\left(g^\dagger\right)
\end{align}
in case of Assumption~\ref{ass:nl}A or
\begin{equation}\label{eq:defi_errnB}
\begin{array}{rcl} \err_{n} &:= &\err \left(F\left(u_n\right)+ F'\left(u_n;u_{n+1}-u_n\right)\right) \\[0.1cm]
&&+ \Cerr\err \left(F\left(u_n\right)+ F'\left(u_n;u^\dagger-u_n\right)\right)\end{array}
\end{equation}
\end{subequations}
under Assumption~\ref{ass:nl}B, and choose the stopping index $n_*$ by
\begin{equation}\label{eq:stopr}
n_*   := \min\left\{n \in \mathbb N ~\big|~ \Theta\left(\alpha_n\right) \leq \tau \err_n\right\}
\end{equation}
with a sufficiently large parameter $\tau\geq 1$. Then \eqref{eq:cr_ex_data} holds for $n \leq n_*$ and the following convergence rates are valid:
\begin{subequations}\label{eqs:conv_in_XY}
\begin{align}
\label{eq:conv_in_X}
\mathcal D^{u^*}_{\mathcal R} \left(u_{n_*}, u^\dagger\right) 
&= \mathcal O \left(\varphi^2 \left(\Theta^{-1} \left(\err_{n_*}\right)\right)\right),\\[0.1cm]
\label{eq:conv_in_Y}
\T{F \left(u_{n_*}\right)}{g^\dagger} &= \mathcal O \left(\err_{n_*}\right).
\end{align}
\end{subequations}
\end{thm}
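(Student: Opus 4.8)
The plan is to carry out an induction on $n$, proving simultaneously the two bounds $\mathcal D^{u^*}_{\mathcal R}(u_n,u^\dagger)\le C_1\varphi^2(\alpha_n)$ and $\T{F(u_n)}{g^\dagger}\le C_2\Theta(\alpha_n)$ (for $n\le n_*$ in the noisy case), with constants $C_1,C_2$ to be fixed during the argument. The engine of the induction is the minimizing property \eqref{eq:NMgen} of $u_{n+1}$: comparing the value of the Tikhonov functional at $u_{n+1}$ with its value at the exact solution $u^\dagger$ gives
\[
\S{F(u_n)+F'(u_n;u_{n+1}-u_n)}{\gdelta}+\alpha_n\mathcal R(u_{n+1})
\le \S{F(u_n)+F'(u_n;u^\dagger-u_n)}{\gdelta}+\alpha_n\mathcal R(u^\dagger).
\]
First I would rewrite $\alpha_n\mathcal R(u_{n+1})-\alpha_n\mathcal R(u^\dagger)$ as $\alpha_n\mathcal D^{u^*}_{\mathcal R}(u_{n+1},u^\dagger)+\alpha_n\langle u^*,u_{n+1}-u^\dagger\rangle$, and bound the inner product from below using the multiplicative variational source condition \ref{ass:sc}A, i.e.\ $\langle u^*,u_{n+1}-u^\dagger\rangle\ge-\beta\,\mathcal D^{u^*}_{\mathcal R}(u_{n+1},u^\dagger)^{1/2}\varphi(\T{F(u_{n+1})}{g^\dagger}/\mathcal D^{u^*}_{\mathcal R}(u_{n+1},u^\dagger))$. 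On the data-misfit side I would use the tangential cone condition (version A or B) together with Assumption~\ref{ass:SR} to pass back and forth between $\calS$ evaluated at the linearization and $\calT$ evaluated at $F(u_{n+1})$, $F(u^\dagger)$, and $F(u_n)$, at the cost of the error terms collected in $\err_n$ and of factors $\Ctc$, $\Cerr$, and of $\eta\,\T{F(u_n)}{g^\dagger}$; the last term is controlled by the induction hypothesis and smallness of $\eta$. This yields a recursive inequality of the schematic form
\[
\T{F(u_{n+1})}{g^\dagger}+\alpha_n\mathcal D^{u^*}_{\mathcal R}(u_{n+1},u^\dagger)
\le \text{const}\cdot\Theta(\alpha_n)+\text{const}\cdot\err_n+\alpha_n\beta\,\mathcal D^{u^*}_{\mathcal R}(u_{n+1},u^\dagger)^{1/2}\varphi(\cdots),
\]
using $\T{F(u_n)+F'(u_n;u^\dagger-u_n)}{g^\dagger}\lesssim\eta\,\T{F(u_n)}{g^\dagger}\lesssim\eta\Theta(\alpha_n)\le\Theta(\alpha_n)$ on the comparison side (here the linearization error in replacing $F(u^\dagger)$ by $F(u_n)+F'(u_n;u^\dagger-u_n)$ is absorbed by the tangential cone condition).

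The next step is to absorb the source-condition term. Writing $D:=\mathcal D^{u^*}_{\mathcal R}(u_{n+1},u^\dagger)$ and $T:=\T{F(u_{n+1})}{g^\dagger}$, the mixed term $\alpha_n\beta D^{1/2}\varphi(T/D)$ must be dominated by $\tfrac12\alpha_n D$ plus a multiple of $T$ plus a multiple of $\Theta(\alpha_n)$. This is where the structural hypotheses \eqref{eq:condvarphi} (that $\varphi(t)/\sqrt t$ is decreasing) and the concavity of $\varphi$ enter: concavity gives $\varphi(T/D)\le$ an affine bound, and the monotonicity of $\varphi(t)/\sqrt t$ lets one trade $D^{1/2}\varphi(T/D)$ against $\sqrt{T}\,\varphi(\text{something})$ and ultimately against $\vartheta(\alpha_n)=\sqrt{\Theta(\alpha_n)}$ via a Young-type inequality $ab\le\varepsilon a^2+\tfrac1{4\varepsilon}b^2$ and the defining relation $\Theta=t\varphi^2(t)$, $\vartheta=\sqrt\Theta$. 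One also uses $\varphi^2(\alpha_n)\Theta(\alpha_n)^{-1}=\alpha_n^{-1}$ to convert between the $D$-scale and the $T$-scale. After absorbing, one obtains $T\le C_2'\Theta(\alpha_n)+C_2'\err_n$ and $D\le C_1'\varphi^2(\alpha_n)+C_1'\alpha_n^{-1}\err_n$; for $n<n_*$ the stopping rule \eqref{eq:stopr} guarantees $\err_n<\Theta(\alpha_n)/\tau$, so with $\tau$ large the $\err_n$ contributions are dominated and one recovers exactly $T\le C_2\Theta(\alpha_n)$, $D\le C_1\varphi^2(\alpha_n)$, closing the induction — provided $C_1,C_2$ and the smallness thresholds on $\eta$, $\mathcal D^{u^*}_{\mathcal R}(u_0,u^\dagger)$, $\T{F(u_0)}{g^\dagger}$ are chosen consistently (this bookkeeping of constants is the routine but delicate part). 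For the exact-data case $\err\equiv0$, the same recursion with $\alpha_n/\alpha_{n+1}\le\Cdec$ from \eqref{eq:parac} gives the bounds for all $n$ and hence $\mathcal D^{u^*}_{\mathcal R}(u_n,u^\dagger)=\mathcal O(\varphi^2(\alpha_n))$, $\T{F(u_n)}{g^\dagger}=\mathcal O(\Theta(\alpha_n))$.

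Finally, for the convergence rates at $n_*$: by minimality of $n_*$ we have $\Theta(\alpha_{n_*-1})>\tau\err_{n_*-1}$, and using \eqref{eq:parac} to compare $\alpha_{n_*}$ with $\alpha_{n_*-1}$ (hence $\Theta(\alpha_{n_*})\gtrsim\Theta(\alpha_{n_*-1})$ up to a $\Cdec$-dependent factor, since $\Theta$ is an index function) together with $\Theta(\alpha_{n_*})\le\tau\err_{n_*}$, one sandwiches $\Theta(\alpha_{n_*})\asymp\err_{n_*}$, i.e.\ $\alpha_{n_*}\asymp\Theta^{-1}(\err_{n_*})$. Plugging this into the bounds just proved at $n=n_*$ yields $\T{F(u_{n_*})}{g^\dagger}=\mathcal O(\err_{n_*})$ and $\mathcal D^{u^*}_{\mathcal R}(u_{n_*},u^\dagger)=\mathcal O(\varphi^2(\Theta^{-1}(\err_{n_*})))$, which is \eqref{eqs:conv_in_XY}. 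One subtlety to watch is that $\err_n$ as defined in \eqref{eq:defi_errnA}/\eqref{eq:defi_errnB} involves $F(u_{n+1})$ and the linearizations at step $n$, so the inequalities must be organized so that the error quantities appearing are exactly those the stopping rule controls; matching these up correctly, and handling the transition from the $\calS$-world (noisy) to the $\calT$-world (exact) cleanly through Assumption~\ref{ass:SR}, is the main obstacle. I expect the absorption of the source term via \eqref{eq:condvarphi} and the constant-chasing in the induction to be the technical heart, while the passage from the recursion to the stated $\mathcal O$-rates is comparatively short.
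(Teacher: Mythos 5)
Your outline is correct and, at the structural level, is the same as the paper's proof: comparing the minimizer $u_{n+1}$ of \eqref{eq:NMgen} with $u^\dagger$, inserting the multiplicative source condition \eqref{eq:scgen}, and passing between $\calS$ and $\calT$ via Assumption 2 and the tangential cone condition gives precisely the paper's recursive estimate \eqref{eq:p6}/\eqref{eq:p12}, which is then closed by induction using the stopping rule \eqref{eq:stopr}, and the rates at $n_*$ follow from $\Theta(\alpha_{n_*})\leq\tau\err_{n_*}$ together with \eqref{eq:varphi2Theta}. The one genuine divergence is your treatment of the mixed term $\alpha_n\beta d_{n+1}\varphi\left(s_{n+1}/d_{n+1}^2\right)$ in the induction step. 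The paper distinguishes whether this term is dominated by $C_{\eta,\tau}\Theta(\alpha_n)$ or dominates it, and in the latter case manipulates \eqref{eq:p8} with $\vartheta^{-1}$ and $\Phi(t)=\sqrt{t}/\varphi(t)$ to extract $s_{n+1}\leq 4\beta^2\Theta(\Ctc\Cerr\alpha_n)$ and then the bound on $d_{n+1}$. You instead absorb the mixed term into $\tfrac12\alpha_n d_{n+1}^2+\epsilon s_{n+1}+C\Theta(\alpha_n)$; this does work, but it requires its own small case distinction that your sketch leaves implicit: for $s_{n+1}/d_{n+1}^2\geq\alpha_n$ the monotonicity \eqref{eq:condvarphi} gives $d_{n+1}\varphi\left(s_{n+1}/d_{n+1}^2\right)\leq\sqrt{s_{n+1}}\,\varphi(\alpha_n)/\sqrt{\alpha_n}$, so Young's inequality yields $\epsilon s_{n+1}+\tfrac{\beta^2}{4\epsilon}\Theta(\alpha_n)$ with $\epsilon<\tfrac{1}{\Ctc\Cerr}$, while for $s_{n+1}/d_{n+1}^2<\alpha_n$ one uses $\varphi\left(s_{n+1}/d_{n+1}^2\right)\leq\varphi(\alpha_n)$ and Young against $\alpha_n d_{n+1}^2$; the ``affine bound from concavity'' you invoke is not what carries this step (that device belongs to the additive condition of Section 4). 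With that fix the induction closes exactly as you describe, the $\Cdec$-shifts via \eqref{eq:condvarphi2} and \eqref{eq:Theta} giving the same constant structure as the paper's $C_1,C_2$; your route is slightly more streamlined, the paper's avoids choosing $\epsilon$ and keeps all constants explicit. One small inaccuracy at the end: the ``sandwich'' $\Theta(\alpha_{n_*})\asymp\err_{n_*}$ is not quite right (minimality of $n_*$ only bounds $\Theta(\alpha_{n_*-1})$ from below by $\tau\err_{n_*-1}$, not by $\tau\err_{n_*}$), but it is also unnecessary: the stated rates only need the one-sided bound $\alpha_{n_*}\leq\Theta^{-1}(\tau\err_{n_*})$ plus \eqref{eq:varphi2Theta}, which is how the paper concludes.
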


\section{Proof of Theorem \ref{thm:cr}}\label{sec:convergence}
We will split the proof into to two main parts. For brevity we will denote
\begin{align}\label{eq:abbrev}
d_n &:=\mathcal D^{u^*}_{\mathcal R} \left(u_n, u^\dagger\right)^{\frac12}, \\[0.1cm]
s_n&:= \T{F\left(u_n\right)}{g^\dagger}.
\end{align}
Let us now start with the following
\begin{lem}\label{lem:2}
Let the assumptions of Theorem \ref{thm:cr} hold true. Then we have a \textbf{recursive error estimate} of the form
\begin{subequations}\label{eq:ree}
\begin{equation}\label{eq:p6}
\alpha_n d_{n+1}^2 + \frac{1}{\Ctc\Cerr} s_{n+1} \leq \eta \left(\Cerr + \frac{1}{\Cerr}\right) s_n + \alpha_n \beta d_{n+1} \varphi \left(\frac{s_{n+1}}{d_{n+1}^2}\right) +\err_n
\end{equation}
in the case of \ref{ass:nl}B and
\begin{equation}\label{eq:p12}
\alpha_n d_{n+1}^2 + \frac{1}{\Ctc\Cerr}s_{n+1} \leq 2\eta \Cerr s_n + \alpha_n \beta d_{n+1} \varphi \left(\frac{s_{n+1}}{d_{n+1}^2}\right) + \err_n
\end{equation}
\end{subequations}
in the case of \ref{ass:nl}A for all $n \in \mathbb N$.
\end{lem}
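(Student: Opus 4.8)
The plan is to derive the recursive estimate by playing off the minimizing property of $u_{n+1}$ in \eqref{eq:NMgen} against the true solution $u^\dagger$, and then to convert the resulting data-misfit terms from $\calS$ to $\calT$ (and back) using the two error inequalities in Assumption~\ref{ass:SR}, together with the tangential cone condition from Assumption~\ref{ass:nl}A or B and the variational source condition \eqref{eq:scgen}. Concretely, I would start from
\[
\S{F(u_n)+F'[u_n](u_{n+1}-u_n)}{\gdelta} + \alpha_n \mathcal R(u_{n+1}) \le \S{F(u_n)+F'[u_n](u^\dagger-u_n)}{\gdelta} + \alpha_n \mathcal R(u^\dagger),
\]
which holds because $u^\dagger\in\mathfrak B$ is a competitor. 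Rearranging gives
\[
\alpha_n\bigl(\mathcal R(u_{n+1})-\mathcal R(u^\dagger)\bigr) \le \S{F(u_n)+F'[u_n](u^\dagger-u_n)}{\gdelta} - \S{F(u_n)+F'[u_n](u_{n+1}-u_n)}{\gdelta}.
\]
On the left I would insert $\langle u^*,u_{n+1}-u^\dagger\rangle$ to form the Bregman distance $d_{n+1}^2$, producing the term $\alpha_n d_{n+1}^2 + \alpha_n\langle u^*,u^\dagger - u_{n+1}\rangle$; the inner product is then bounded via the source condition \eqref{eq:scgen}, which yields exactly the term $\alpha_n\beta d_{n+1}\varphi\bigl(s_{n+1}/d_{n+1}^2\bigr)$ once one identifies $\T{F(u_{n+1})}{g^\dagger}=s_{n+1}$ — here one must be slightly careful because the source condition produces $\T{F(u_{n+1})}{g^\dagger}$ in the argument of $\varphi$, and if the tangential cone condition has introduced $\T{F(u_n)+F'[u_n](u_{n+1}-u_n)}{g^\dagger}$ instead, one uses the lower bound in \eqref{eq:tcb} (resp.\ \eqref{eq:tca}) to pass between them, absorbing the $\eta\,s_n$ remainder.

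For the right-hand side I would handle the two cases separately. Under Assumption~\ref{ass:nl}B, I would first convert both $\calS$-terms to $\calT$-terms using \eqref{eq:err} (paying the price $\Cerr\err(\cdot)$ each time, which feeds into the definition \eqref{eq:defi_errnB} of $\err_n$ together with the $\err(g^\dagger)$ that appears when relating data at $\gdelta$ to data at $g^\dagger$), then use the upper bound in \eqref{eq:tcb} on the $u^\dagger$-term and the lower bound in \eqref{eq:tcb} on the $u_{n+1}$-term. Since $\T{F(u^\dagger)}{g^\dagger}=\T{g^\dagger}{g^\dagger}=0$ by Assumption~\ref{ass:SR}.1, the $u^\dagger$-term contributes only $\eta\,\T{F(u_n)}{g^\dagger}=\eta s_n$, while the $u_{n+1}$-term contributes $\tfrac1{\Ctc}\T{F(u_{n+1})}{g^\dagger}-\eta s_n = \tfrac1{\Ctc}s_{n+1}-\eta s_n$ with the correct sign to move to the left-hand side. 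Collecting the $\eta s_n$ contributions from both sides gives the coefficient $\eta(\Cerr+\Cerr^{-1})$ and the factor $\tfrac1{\Ctc\Cerr}$ in front of $s_{n+1}$ (the extra $\Cerr$ coming from the $\calS\to\calT$ conversion of the $u_{n+1}$-term). Under Assumption~\ref{ass:nl}A the bookkeeping is the same but one applies the tangential cone condition directly at the level of $\calS$ first and converts to $\calT$ afterwards, which is why the remainder carries $2\eta\Cerr s_n$ and $\err_n$ has the form \eqref{eq:defi_errnA} with its $\err(F(u_{n+1}))$, $\err(F(u_n))$ and $\err(g^\dagger)$ contributions.

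The main obstacle I anticipate is the careful accounting of which functional ($\calS$ at $\gdelta$, $\calS$ at $g^\dagger$, $\calT$ at $g^\dagger$, or the linearized version $\T{F(u_n)+F'[u_n](\cdot)}{g^\dagger}$) appears at each intermediate step, and ensuring that every application of \eqref{eq:err}, \eqref{eq:tca}/\eqref{eq:tcb} and \eqref{eq:scgen} is made in the admissible direction so that the error remainders accumulate exactly into the $\err_n$ of \eqref{eq:defi_errnA} or \eqref{eq:defi_errnB} and nothing with the wrong sign is left over. A secondary point requiring care is that the argument of $\varphi$ in the source-condition term is $s_{n+1}/d_{n+1}^2$ whereas the tangential cone condition naturally produces the linearized misfit; reconciling these (and dealing with the degenerate case $d_{n+1}=0$, where \eqref{eq:scgen} forces $\langle u^*,u^\dagger-u_{n+1}\rangle\le 0$) is the delicate part. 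Once the recursive inequality \eqref{eq:ree} is in place the monotonicity property \eqref{eq:condvarphi} of $t\mapsto\varphi(t)/\sqrt t$ will be what allows one, in the subsequent lemma, to absorb the $\varphi$-term and close the induction, but that is beyond the present statement.
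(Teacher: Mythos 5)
Your proposal is correct and follows essentially the same route as the paper's proof: minimality of $u_{n+1}$ against the competitor $u^\dagger$, the Bregman-distance split combined with the source condition \eqref{eq:scgen} applied at $u=u_{n+1}$ (which yields the argument $s_{n+1}/d_{n+1}^2$ of $\varphi$ directly, so the tangential-cone reconciliation you worry about never arises), and then \eqref{eq:err} together with \eqref{eq:tcb} in case B (converting $\calS$ to $\calT$ first) or \eqref{eq:tca} in case A (applied at the level of $\calS$ and converting afterwards). The only slip is cosmetic: in case B no $\err(g^\dagger)$ term appears, since $\T{g^\dagger}{g^\dagger}=0$ is used after the tangential cone bound and \eqref{eq:defi_errnB} contains only the two linearized error terms, whereas $\err(g^\dagger)$ enters only in case A via \eqref{eq:defi_errnA}.
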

\begin{proof}
Due to \eqref{eq:scgen} we have
\begin{align}
\mathcal R \left(u_{n+1}\right) - \mathcal R \left(u^\dagger\right) &= \mathcal D^{u^*}_{\mathcal R} \left(u_{n+1},u^\dagger\right) - \left< u^*, u^\dagger - u_{n+1}\right> \nonumber\\[0.1cm]
& \geq d_{n+1}^2  -\beta d_{n+1}\varphi\left(\frac{s_{n+1}}{d_{n+1}^2}\right).\label{eq:p1}
\end{align}
From the minimality condition \eqref{eq:NMgen} with $u=u^{\dagger}$
we obtain 
\begin{align}
&\alpha_n \left(\mathcal R\left(u_{n+1}\right) - \mathcal R \left(u^\dagger\right) \right) + \S{F\left(u_n\right) + F'\left(u_n;u_{n+1}- u_n\right)}{\gdelta}\nonumber\\[0.1cm]
\leq &\S{F\left(u_n\right) +F'\left(u_n;u^\dagger-u_n\right)}{\gdelta}\, ,\label{eq:p2}
\end{align}
and putting \eqref{eq:p1} and \eqref{eq:p2} together we find that
\begin{align}
&\alpha_n d_{n+1}^2 + \S{F\left(u_n\right) + F'\left(u_n;u_{n+1}- u_n\right)}{\gdelta} \nonumber\\[0.1cm]
\leq& \S{F\left(u_n\right)+ F'\left(u_n;u^\dagger-u_n\right)}{\gdelta}+  \alpha_n \beta d_{n+1}\varphi\left(\frac{s_{n+1}}{d_{n+1}^2}\right). \label{eq:p11}
\end{align}
\begin{itemize}
\item In the case of \ref{ass:nl}B we use \eqref{eq:err}, which yields
\begin{align*}
&\alpha_n d_{n+1}^2 + \frac{1}{\Cerr}\T{F\left(u_n\right) + F'\left(u_n;u_{n+1}- u_n\right)}{g^\dagger} \\[0.1cm]
\leq& \Cerr\T{F\left(u_n\right)+ F'\left(u_n;u^\dagger-u_n\right)}{g^\dagger}+ \alpha_n\beta d_{n+1}\varphi\left(\frac{s_{n+1}}{d_{n+1}^2}\right)+\err_{n}
\end{align*}
and \eqref{eq:tcb} with $v = u^\dagger$, $u = u_n$ leads to
\begin{align*}
&\alpha_n d_{n+1}^2 + \frac{1}{\Cerr}\T{F\left(u_n\right) + F'\left(u_n;u_{n+1}- u_n\right)}{g^\dagger}\\[0.1cm]
\leq& \eta \Cerr s_n + \alpha_n\beta d_{n+1}\varphi\left(\frac{s_{n+1}}{d_{n+1}^2}\right)+\err_{n}.
\end{align*}
By \eqref{eq:tcb} with $v = u_{n+1}$, $u = u_n$ we obtain \eqref{eq:p6}. 
\item In the case of \ref{ass:nl}A we are able to apply \eqref{eq:tca} with $v = u^\dagger$, $u = u_n$ and \eqref{eq:tca} with $v = u_{n+1}$ and $u = u_n$ to \eqref{eq:p11} to conclude
\begin{align*}
&\alpha_n d_{n+1}^2 + \frac{1}{\Ctc}\S{F\left(u_{n+1}\right)}{\gdelta} \\[0.1cm]
\leq& 2 \eta \S{F\left(u_n\right)}{\gdelta}+ \Ctc \S{F\left(u^\dagger\right)}{\gdelta} +  \alpha_n \beta d_{n+1}\varphi\left(\frac{s_{n+1}}{d_{n+1}^2}\right).
\end{align*}
Due to \eqref{eq:err} and Assumption~\ref{ass:SR}.2 this yields \eqref{eq:p12}.
\end{itemize}
\end{proof}

Before we deduce the convergence rates from the recursive error estimates \eqref{eq:ree} respectively, 
we note some inequalities for the index functions defined in \eqref{eqs:defiThetaf} and their inverses:

\begin{rem}
\begin{enumerate}
\item
We have
\begin{align}
\label{eq:varphif}
\varphi \left(\vartheta^{-1}\left(Ct\right)\right) &\leq \max\left\{\sqrt{C},1\right\} \varphi \left(\vartheta^{-1}\left(t\right)\right)\\
\label{eq:varphi2Theta}
\varphi^2 \left(\Theta^{-1}\left(Ct\right)\right) &\leq \max\left\{\sqrt{C},1\right\} \varphi ^2\left(\Theta^{-1}\left(t\right)\right)
\end{align}
for all $t \geq 0$ and $C>0$ if defined, where each inequality follows from
two applications of the monotonicity assumption \eqref{eq:condvarphi}
(see \cite[Remark 2]{kh10}). 
\item
Since $\varphi$ is concave, we have
\begin{equation}\label{eq:condvarphi2}
\varphi \left(\lambda t\right)  \leq \lambda \varphi \left(t\right) \qquad \text{ for all }t\text{ sufficiently small and }\lambda\geq 1
\end{equation}
\item
\eqref{eq:condvarphi2} implies the following inequality 
for all $t$ sufficiently small and $\lambda \geq 1$:
\begin{equation}\label{eq:Theta}
\Theta \left(\lambda t\right) \leq \lambda^3 \Theta \left(t\right)
\end{equation}
\end{enumerate}
\end{rem}

The following induction proof follows along the lines of 
a similar argument in the proof of \cite[Theorem 1]{kh10}:
\begin{lem}\label{lem:1}
Let the assumptions of Theorem \ref{thm:cr} hold. Then an estimate of the kind \eqref{eq:p6} implies 
\begin{align}
d_n &\leq C_1 \varphi \left(\alpha_n\right), \label{eq:ind1} \\[0.1cm]
s_n & \leq C_2 \Theta \left(\alpha_n\right) \label{eq:ind2}
\end{align}
for all $n \leq n_*$ in case of noisy data and for all $n \in \mathbb N$ in case of exact data where (due to $\eta$ sufficiently small)
\begin{align*}
C_2 &= \max\left\{4 \beta^2 \left(\Ctc\Cerr\Cdec\right)^3, \frac{2\Ctc\Cerr\Cdec^3}{\tau \left(1-2\Cdec^3 \Ctc\Cerr\eta \left(\Cerr + \frac{1}{\Cerr}\right)\right)}\right\}, \\[0.1cm]
C_1 &= \max\left\{\sqrt{2\beta}\sqrt[4]{C_2}, \sqrt{2\left(\eta C_2 \left(\Cerr + 1/\Cerr\right) + 1/\tau\right)}\Cdec\right\}.
\end{align*}
Since \eqref{eq:p12} is of the same form as \eqref{eq:p6} (only the constants differ), \eqref{eq:ind1} and \eqref{eq:ind2} are (with slightly changed constants) also valid under \eqref{eq:p12}.
\end{lem}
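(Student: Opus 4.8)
The plan is to prove the two bounds \eqref{eq:ind1} and \eqref{eq:ind2} simultaneously by induction on $n$, starting from the recursive estimate \eqref{eq:p6}. For the base case $n=0$ one uses that $d_0^2 = \mathcal D^{u^*}_{\mathcal R}(u_0,u^\dagger)$ and $s_0 = \T{F(u_0)}{g^\dagger}$ are assumed sufficiently small, together with the normalization $\alpha_0 \leq 1$ and monotonicity of $\varphi$, to check that $d_0 \leq C_1\varphi(\alpha_0)$ and $s_0 \leq C_2\Theta(\alpha_0)$ hold for the stated constants (shrinking the smallness thresholds if necessary). For the induction step, assume \eqref{eq:ind1}, \eqref{eq:ind2} hold at index $n$ (with $n+1 \leq n_*$, or unconditionally in the exact data case where $\err_n = 0$) and insert them into \eqref{eq:p6}.

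First I would bound the right-hand side of \eqref{eq:p6}. The term $\eta(\Cerr + 1/\Cerr)s_n$ is controlled by $\eta(\Cerr+1/\Cerr)C_2\Theta(\alpha_n)$ using the induction hypothesis. The term $\err_n$ is handled via the stopping rule: for $n+1 \leq n_*$, minimality of $n_*$ gives $\Theta(\alpha_n) > \tau\err_n$, so $\err_n < \Theta(\alpha_n)/\tau$ — this is the mechanism that converts the noise term into something comparable to $\Theta(\alpha_n)$. The Young-type term $\alpha_n\beta d_{n+1}\varphi(s_{n+1}/d_{n+1}^2)$ is the delicate one: I would split the analysis into two cases according to whether $s_{n+1}/d_{n+1}^2 \leq \alpha_n$ or $> \alpha_n$. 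In the first case, using that $t\mapsto \varphi(t)/\sqrt t$ is decreasing \eqref{eq:condvarphi} one gets $\varphi(s_{n+1}/d_{n+1}^2) \leq \varphi(\alpha_n)\sqrt{s_{n+1}/(d_{n+1}^2\alpha_n)}$, so the term is absorbed into a fraction of $\alpha_n d_{n+1}^2 + \frac{1}{\Ctc\Cerr}s_{n+1}$ on the left plus a multiple of $\alpha_n\varphi^2(\alpha_n) = \Theta(\alpha_n)$; in the second case one uses concavity to bound $\varphi(s_{n+1}/d_{n+1}^2) \leq \varphi(\alpha_n)\cdot s_{n+1}/(d_{n+1}^2\alpha_n)$ and argues similarly. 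Either way one arrives, after absorbing, at $\alpha_n d_{n+1}^2 + \frac{c}{\Ctc\Cerr}s_{n+1} \lesssim \Theta(\alpha_n)$ for an explicit constant.

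Next I would extract the two conclusions at index $n+1$. Dropping the $d_{n+1}^2$ term yields $s_{n+1} \leq (\text{const})\Theta(\alpha_n) \leq (\text{const})\Cdec^3\Theta(\alpha_{n+1})$, where the last step uses $\alpha_n \leq \Cdec\alpha_{n+1}$ together with \eqref{eq:Theta}; choosing $C_2$ as in the statement makes this exactly $s_{n+1} \leq C_2\Theta(\alpha_{n+1})$. Dropping the $s_{n+1}$ term and dividing by $\alpha_n$ gives $d_{n+1}^2 \leq (\text{const})\varphi^2(\alpha_n)$, and again $\alpha_n \leq \Cdec\alpha_{n+1}$ with \eqref{eq:condvarphi2} converts $\varphi^2(\alpha_n) \leq \Cdec^2\varphi^2(\alpha_{n+1})$, so $d_{n+1} \leq C_1\varphi(\alpha_{n+1})$ for the stated $C_1$. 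The two definitions of $C_1, C_2$ are precisely what is needed to make the induction close, which is why they appear in the fixed-point form given. Finally, the remark that \eqref{eq:p12} has the same structure as \eqref{eq:p6} — only $\eta(\Cerr+1/\Cerr)$ is replaced by $2\eta\Cerr$ — means the identical argument applies verbatim with that substitution, giving \eqref{eq:ind1} and \eqref{eq:ind2} with correspondingly adjusted constants.

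The main obstacle is the handling of the nonlinear term $\alpha_n\beta d_{n+1}\varphi(s_{n+1}/d_{n+1}^2)$: it couples $d_{n+1}$ and $s_{n+1}$ in a way that cannot be separated by a naive Young inequality without losing the sharp power, so the case distinction on the size of $s_{n+1}/d_{n+1}^2$ relative to $\alpha_n$, combined with the monotonicity of $t\mapsto\varphi(t)/\sqrt t$, is essential; getting the absorption constants to be compatible with the smallness of $\eta$ and with the self-referential definitions of $C_1$ and $C_2$ is the delicate bookkeeping. A secondary subtlety is that in the noisy-data case the induction is only valid for $n+1 \leq n_*$, so one must be careful that every application of the stopping-rule bound $\err_n < \Theta(\alpha_n)/\tau$ is made only for such $n$, while the exact-data case ($\err \equiv 0$, $n_* = \infty$) runs without this restriction.
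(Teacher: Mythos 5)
Your overall strategy (induction with base case from smallness of $d_0,s_0$, the stopping rule giving $\err_n<\Theta(\alpha_n)/\tau$ for $n\leq n_*-1$, a case distinction to handle the coupling term, then conversion from $\alpha_n$ to $\alpha_{n+1}$ via $\Cdec$) is the right skeleton, and it differs from the paper mainly in how the coupling term $\alpha_n\beta d_{n+1}\varphi(s_{n+1}/d_{n+1}^2)$ is tamed: the paper splits according to whether this term exceeds $C_{\eta,\tau}\Theta(\alpha_n)$, and in the dominant case multiplies the inequality by $\sqrt{s_{n+1}}/d_{n+1}^2$ and decouples $s_{n+1}$ and $d_{n+1}$ through the monotone functions $\Phi(t)=\sqrt{t}/\varphi(t)$ and $\vartheta^{-1}$ (no Young inequality), which is exactly what produces the stated constants $C_1,C_2$. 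However, your version of the ``delicate step'' contains concrete errors. In your first case $t:=s_{n+1}/d_{n+1}^2\leq\alpha_n$, the monotonicity \eqref{eq:condvarphi} gives $\varphi(t)\geq\varphi(\alpha_n)\sqrt{t/\alpha_n}$, i.e.\ the \emph{reverse} of the inequality you wrote; the bound you need there is simply $\varphi(t)\leq\varphi(\alpha_n)$, after which Young's inequality $\beta d_{n+1}\varphi(\alpha_n)\leq\tfrac12 d_{n+1}^2+\tfrac{\beta^2}{2}\varphi^2(\alpha_n)$ does the absorption. In your second case $t>\alpha_n$, the concavity bound $\varphi(t)\leq\varphi(\alpha_n)\,t/\alpha_n$ is true but useless: it turns the coupling term into $\beta\varphi(\alpha_n)\,s_{n+1}/d_{n+1}$, which cannot be absorbed into $\alpha_n d_{n+1}^2+\tfrac{1}{\Ctc\Cerr}s_{n+1}$ without already knowing a lower bound on $d_{n+1}$ of the size one is trying to prove. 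The inequality that works there is precisely the one you misplaced, $\varphi(t)\leq\varphi(\alpha_n)\sqrt{t/\alpha_n}$ (valid for $t\geq\alpha_n$ by \eqref{eq:condvarphi}), which gives $\beta\sqrt{\alpha_n}\varphi(\alpha_n)\sqrt{s_{n+1}}$ and can be absorbed by Young into a fraction of $\tfrac{1}{\Ctc\Cerr}s_{n+1}$ plus a multiple of $\Theta(\alpha_n)$. So the two bounds must be swapped between the cases; as written, the step fails.

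Two further points. First, even after this repair, the Young-based absorption yields admissible constants that are \emph{not} the $C_1,C_2$ displayed in the lemma (those encode the paper's $\vartheta^{-1}/\Phi$ argument), so your remark that ``choosing $C_2$ as in the statement makes this exactly $s_{n+1}\leq C_2\Theta(\alpha_{n+1})$'' is not accurate; you would have to redo the self-consistency bookkeeping (the $\eta$-smallness fixed-point condition, since $C_2$ enters $C_{\eta,\tau}$) for your own constants, which is harmless for the use made of the lemma but does not literally prove the stated formulas. Second, a small technical point the paper handles explicitly: before dividing by $d_{n+1}$ or $d_{n+1}^2$ one must dispose of the case $d_{n+1}=0$ separately.
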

\begin{proof}
For $n = 0$ \eqref{eq:ind1} and \eqref{eq:ind2} are guaranteed by the assumption that $d_0$ and $s_0$ are small enough. For the induction step we observe that \eqref{eq:p6} together with \eqref{eq:stopr} and the induction hypothesis for $n \leq n_* -1$ implies
\[
\alpha_n d_{n+1}^2 + \frac{1}{\Ctc\Cerr} s_{n+1} \leq C_{\eta, \tau} \Theta \left(\alpha_n\right) + \alpha_n \beta d_{n+1} \varphi \left(\frac{s_{n+1}}{d_{n+1}^2}\right)
\]
where $C_{\eta, \tau} = \eta C_2 \left(\Cerr + 1/\Cerr\right) + 1/\tau$. Now we distinguish between two cases:\\[0.1cm]
\textbf{Case 1:} $\alpha_n \beta d_{n+1} \varphi \left(\frac{s_{n+1}}{d_{n+1}^2}\right) \leq C_{\eta, \tau} \Theta \left(\alpha_n\right)$.\\
In that case we find
\[
\alpha_n d_{n+1}^2 + \frac{1}{\Ctc\Cerr} s_{n+1} \leq 2 C_{\eta, \tau} \Theta \left(\alpha_n\right)
\]
which by $\Theta\left(t\right) / t = \varphi^2\left(t\right)$, \eqref{eq:condvarphi2} and \eqref{eq:Theta} implies
\begin{align*}
d_{n+1} &\leq \sqrt{2C_{\eta, \tau}}\varphi \left(\alpha_n\right) = \sqrt{2C_{\eta, \tau}}\varphi \left(\frac{\alpha_n}{\alpha_{n+1}} \alpha_{n+1}\right) \leq \sqrt{2C_{\eta, \tau}}\Cdec\varphi\left(\alpha_{n+1}\right),\\[0.1cm]
s_{n+1} &\leq 2\Ctc\Cerr C_{\eta, \tau}\Theta \left(\alpha_n\right)\leq 2\Ctc\Cerr C_{\eta, \tau}\Cdec^3\Theta \left(\alpha_{n+1}\right).
\end{align*}
The assertions now follow by $\sqrt{2C_{\eta, \tau}}\Cdec \leq C_1$ and $2\Ctc\Cerr C_{\eta, \tau}\Cdec^3 \leq C_2$ which is ensured by the definition of $C_2$.

\textbf{Case 2:} $\alpha_n \beta d_{n+1} \varphi \left(\frac{s_{n+1}}{d_{n+1}^2}\right) > C_{\eta, \tau} \Theta \left(\alpha_n\right)$.\\
In that case we find
\[
\alpha_n d_{n+1}^2 + \frac{1}{\Ctc\Cerr} s_{n+1} \leq 2 \alpha_n \beta d_{n+1} \varphi \left(\frac{s_{n+1}}{d_{n+1}^2}\right).
\]
If $d_{n+1} = 0$, then this implies $s_{n+1} = 0$ and hence the assertion is trivial. By multiplying with $\sqrt{s_{n+1}}$ and dividing by $d_{n+1}^2$ we have
\begin{equation}\label{eq:p8}
\alpha_n \sqrt{s_{n+1}} + \frac{1}{\Ctc\Cerr} \frac{s_{n+1}}{d_{n+1}^2} \sqrt{s_{n+1}} \leq 2 \beta \alpha_n \vartheta\left(\frac{s_{n+1}}{d_{n+1}^2}\right).
\end{equation}
Considering only the first term on the left hand side of \eqref{eq:p8} this is
\begin{equation}\label{eq:p9}
\vartheta^{-1}\left(\frac{\sqrt{s_{n+1}}}{2\beta}\right) \leq \frac{s_{n+1}}{d_{n+1}^2}
\end{equation}
and by considering only the second term on the left hand side of \eqref{eq:p8}
\begin{equation}\label{eq:p10}
\Phi \left(\frac{s_{n+1}}{d_{n+1}^2}\right) \sqrt{s_{n+1}} \leq 2 \beta \Ctc\Cerr\alpha_n
\end{equation}
where $\Phi \left(t\right) = \sqrt{t} / \varphi \left(t\right) = t / \vartheta\left(t\right)$. Plugging \eqref{eq:p9} into \eqref{eq:p10} using the monotonicity of $\Phi$ by \eqref{eq:condvarphi} we find
\[
\Phi \left(\vartheta^{-1}\left(\frac{\sqrt{s_{n+1}}}{2 \beta}\right)\right) \sqrt{s_{n+1}}\leq 2\beta \Ctc\Cerr \alpha_n.
\]
Since $\Phi \left(\vartheta^{-1}\left(t\right)\right) = \vartheta^{-1}\left(t\right) / t$ this shows
\[
\vartheta^{-1}\left(\frac{\sqrt{s_{n+1}}}{2\beta}\right)\leq \Ctc\Cerr \alpha_n.
\]
Hence,
\[
s_{n+1} \leq 4\beta^2 \Theta \left(\Ctc\Cerr \alpha_n\right)
\]
which by \eqref{eq:Theta} and $4 \beta^2 \left(\Cdec \Ctc\Cerr\right)^3 \leq C_2$ implies $s_{n+1} \leq C_2 \Theta \left(\alpha_{n+1}\right)$.\\
Now from $\vartheta\left(t\right) = \sqrt{t} \varphi\left(t\right)$ we find $b^2\left(\varphi \left(\vartheta^{-1}\left(\frac{\sqrt{a}}{b}\right)\right)\right)^2 = a/\vartheta^{-1}\left(\frac{\sqrt{a}}{b}\right)$ and hence by \eqref{eq:p9}
\begin{align*}
d_{n+1}^2 &\leq 4\beta^2 \left(\varphi \left(\vartheta^{-1}\left(\frac{\sqrt{s_{n+1}}}{2\beta}\right)\right)\right)^2 \\[0.1cm]
&\leq 4\beta^2 \left(\varphi \left(\vartheta^{-1}\left(\frac{\sqrt{C_2}}{2\beta}\vartheta\left(\alpha_{n+1}\right)\right)\right)\right)^2 \\[0.1cm]
&\leq 2\beta\sqrt{C_2}\varphi \left(\alpha_{n+1}\right)^2 \\[0.1cm]
&\leq C_1^2 \varphi \left(\alpha_{n+1}\right)^2
\end{align*}
where we used \eqref{eq:varphif}, $C_2 \geq 4 \beta^2$ due to $\Cdec\Ctc\Cerr\geq 1$ and $\sqrt{2\beta}\sqrt[4]{C_2}\leq C_1$.\\[0.1cm]
Therefore, we have proven that \eqref{eq:ind1} and \eqref{eq:ind2} hold for all $n \leq n_*$ (or in case of exact data for all $n \in \mathbb N$).
\end{proof}
With these two lemmas at hand we are able to complete the Proof of Theorem \ref{thm:cr}: Inserting \eqref{eq:stopr} into \eqref{eq:ind1} and \eqref{eq:ind2} we find using \eqref{eq:varphi2Theta}
\[
\mathcal D^{u^*}_{\mathcal R} \left(u_{n_*},u^\dagger\right) \leq C_1\varphi^2\left(\alpha_{n_*}\right)  = \mathcal O \left(\varphi^2\left(\Theta^{-1} \left(\err_{n_*}\right)\right)\right)
\]
and 
\[
\T{F\left(u_{n_*}\right)}{g^\dagger} \leq C_2\Theta \left(\alpha_{n_*}\right)=\mathcal O \left(\err_{n_*}\right).
\]

\section{A Lepski{\u\i}-type stopping rule and additive source conditions}\label{sec:lepskii}
In this section we will present a convergence rates result under the following variational source condition in additive form:
\setcounter{ass}{4}
\begin{assb}\label{ass:additive_sc}
There exists $u^*\in\partial \mathcal{R}(u^{\dagger})\subset \Xspace'$,
parameters $\beta_1\in\left[0,1/2\right)$, $\beta_2>0$ (later assumed to be sufficiently small), 
and a strictly concave, differentiable index function 
$\varphi$ satisfying $\varphi'\left(t\right) \nearrow \infty$ as $t \searrow 0$ such that
\begin{equation}\label{eq:vieadd}
\left<u^*, u^\dagger - u\right> \leq \beta_1\mathcal D_{\mathcal R}^{u^*} 
\left(u, u^\dagger\right) + \beta_2\varphi\left(\T{F\left(u\right)}{g^\dagger}\right)\qquad 
\mbox{for all }u \in \mathfrak B\,.
\end{equation}
\end{assb}
A special case of condition \eqref{eq:vieadd}, motivated by the \textit{benchmark condition} $u^* = F\left[u^\dagger\right]^* \omega$ was first introduced 
in \cite{hkps07} to prove convergence rates of Tikhonov-type regularization in Banach spaces (see also \cite{s08}). Flemming \cite{f10} uses them to prove convergence rates for nonlinear Tikhonov regularization \eqref{eq:nltikh} with general 
$\calS$ and $\mathcal R$. Bot \& Hofmann \cite{bh10} prove convergence rates for general $\varphi$ and introduce the use of Young's inequality which we will apply 
in the following. Finally, Hofmann \& Yamamoto \cite{hy10} prove equivalence in the Hilbert space case for $\varphi\left(t\right) = \sqrt{t}$ in \eqref{eq:sc} and \eqref{eq:vieadd} (with different $\varphi$, cf. \cite[Prop. 4.4]{hy10}) and almost equivalence for $\varphi\left(t\right) = t^\nu$ with $\nu < \frac12$ in \eqref{eq:sc} (again with different $\varphi$ in \eqref{eq:vieadd}, cf. \cite[Prop. 6.6 and Prop. 6.8]{hy10}) under a suitable nonlinearity condition.

Latest research results show that a classic Hilbert space source conditions \eqref{eq:sc}, which have natural interpretations in a number of important examples, relates to \eqref{eq:vieadd} in a way that one obtains order optimal rates (see \cite{f11diss}). Nevertheless, this can be seen much easier for multiplicative variational source conditions (see \eqref{eq:schspace}). 

The additive structure of the variational inequality will facilitate our proof and the result will give us the possibility to apply a Lepski{\u\i}-type stopping rule. We remark that for $\mathfrak{s}\neq 0$ in Assumption \ref{ass:SR}
it is not clear how to
formulate an implementable discrepancy principle.

\bigskip

Given $\varphi$ in \eqref{eq:vieadd}, we construct the following further
index functions as in \cite{bh10}, which will be used   
in our convergence theorem:
\begin{subequations}
\begin{align}
\label{eq:defi_psi}
\psi \left(t\right) &= \begin{cases} \frac{1}{\varphi' \left(\varphi^{-1}\left(t\right)\right)}&\text{if }t > 0,\\[0.1cm]
0 & \text{if }t = 0, \end{cases} = \begin{cases}\left(\varphi^{-1}\right)' \left(t\right)&\text{if }t > 0,\\[0.1cm]
0 & \text{if }t = 0, \end{cases} \displaybreak[1]\\[0.1cm]
\label{eq:defi_varPsi}
\varPsi\left(t\right) &= \int\limits_0^t \psi^{-1}\left(s\right) \,\mathrm d s, \qquad t \geq 0, \displaybreak[1]\\[0.1cm]
\label{eq:defi_Lambda}
\Lambda&= \inf\left\{g ~\big|~\sqrt{g}\text{ concave index function, } g\left(t\right)\geq \frac{\varPsi \left(t\right)}{t}\text{ for } t \geq 0\right\}.
\end{align}
\end{subequations}
The definition \eqref{eq:defi_Lambda} ensures that $\sqrt{\Lambda}$ is concave, which by \eqref{eq:parac} implies 
\begin{equation}\label{eq:decayLambda}
\left(\Lambda\left(\alpha_n\right)\right)^{\frac1q} \leq \Cdec^{\frac{2}{q}} \left(\Lambda\left(\alpha_{n-1}\right)\right)^{\frac1q}
\end{equation}
for all $q \geq 1$ and $n \in \mathbb N$. Since for linear problems $\sqrt{\varPsi\left(\alpha_n\right)/\alpha_n}$ is  
a bound on the approximation error (see \cite{bh10}) and since for
Tikhonov regularization the approximation error decays at most of the order
$O(\alpha_n)$, we expect that $t\mapsto \sqrt{\varPsi (t)/t}$ is ''asymptotically
concave'' in the sense that 
$\lim_{t\searrow 0}\Lambda(t)t/\varPsi(t) = 1$,
so we don't loose anything by replacing $\Psi(t)/t$ by $\Lambda(t)$.  
Indeed, it is easy to see that this is the case for logarithmic and 
H\"older type source conditions with $\nu \leq 1$, and in the latter 
case $t\mapsto \sqrt{\varPsi (t)/t}$ itself is concave everywhere.

\begin{lem}\label{lem:err_decomp}
Let Assumption~\ref{ass:F}, \ref{ass:SR}, \ref{ass:ex},  \ref{ass:nl}A or \ref{ass:nl}B and \ref{ass:additive_sc}B hold true and assume that there exists a uniform upper bound $\err_n\leq \err$ for the error terms $\err_n$ in Theorem \ref{thm:cr}. Then, with the notation \eqref{eq:abbrev}, the error of the iterates $u_n$ defined by \eqref{eqs:method} for $n \geq 1$ can be bounded by the
sum of an \emph{approximation error} bound $\Phi_{\rm app}(n)$, 
a \emph{propagated data noise error} bound $\Phi_{\rm noi}(n)$ and a \emph{nonlinearity error} bound $\Phi_{\rm nl}(n)$,
\begin{equation}\label{eq:err_dec}
d_n^2 \leq \Phi_{\rm nl}\left(n\right) +\Phi_{\rm app}\left(n\right)+ \Phi_{\rm noi}\left(n\right)
\end{equation}
where
\begin{align*}
\Phi_{\rm nl}\left(n\right) &:= 2\eta C_{\rm NL}\frac{s_{n-1}}{\alpha_{n-1}}, \\
\Phi_{\rm app} \left(n\right) &:= 2\beta_2 \Lambda \left(\alpha_{n-1}\right), \\
\Phi_{\rm noi} \left(n\right) &:= 2\frac{\err}{\alpha_{n-1}}.
\end{align*}
and $C_{\rm NL} := \max\left\{2 \Cerr, \Cerr + 1/\Cerr\right\}$. Moreover, if $\eta$ and $\beta_2$ are sufficiently small, the estimate 
\begin{equation}\label{eq:nlest2}
\Phi_{\rm nl}\left(n\right) \leq \gamma_{\rm nl} \left(\Phi_{\rm noi}\left(n\right) + \Phi_{\rm app} \left(n\right)\right)
\end{equation}
holds true with
\begin{align*}
\gamma_{\rm nl} &:=\max\left\{\frac{\Cdec^2 \bar \gamma}{1-\Cdec^2 \bar \gamma}, \frac{\Phi_{\rm nl}\left(1\right)}{\Phi_{\rm app}\left(1\right) + 
\Phi_{\rm noi}\left(1\right)}\right\}, \qquad
\bar \gamma := \frac{\eta \Cdec C_{\rm NL}}{\frac{1}{\Ctc\Cerr} -\beta_2}\,.
\end{align*}
\end{lem}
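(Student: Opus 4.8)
The plan is to rerun the first half of the proof of Lemma~\ref{lem:2} with the additive source condition~\eqref{eq:vieadd} in place of the multiplicative one~\eqref{eq:scgen}, and then to get rid of the term $\varphi(s_{n+1})$ by a Young-type inequality. Inserting $u=u_{n+1}$ into~\eqref{eq:vieadd} replaces~\eqref{eq:p1} by $\mathcal R(u_{n+1})-\mathcal R(u^\dagger)\geq(1-\beta_1)d_{n+1}^2-\beta_2\varphi(s_{n+1})$. Since the extra factor $1-\beta_1$ and the term $\alpha_n\beta_2\varphi(s_{n+1})$ merely ride along on the outer terms and never interact with the manipulations of $\calS$, $\calT$ via~\eqref{eq:err} and the tangential cone conditions~\eqref{eq:tca}/\eqref{eq:tcb}, the argument behind~\eqref{eq:p6} resp.~\eqref{eq:p12} goes through verbatim and yields, under~\ref{ass:nl}A or~\ref{ass:nl}B,
\[
\alpha_n(1-\beta_1)\,d_{n+1}^2+\frac{1}{\Ctc\Cerr}\,s_{n+1}\ \leq\ \eta\,C_{\rm NL}\,s_n+\alpha_n\beta_2\,\varphi(s_{n+1})+\err_n ,
\]
the nonlinearity coefficient being $\eta(\Cerr+1/\Cerr)$ under~\ref{ass:nl}B and $2\eta\Cerr$ under~\ref{ass:nl}A, hence bounded by $\eta C_{\rm NL}$ in both cases.

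Next I would dispose of $\varphi(s_{n+1})$. The functions $\varphi^{-1}$ and $\varPsi$ from~\eqref{eq:defi_varPsi} are a pair of convex conjugates, because $(\varphi^{-1})'=\psi$ (which is an index function since $\varphi'(t)\nearrow\infty$ forces $\psi(0)=0$) and $\varPsi'=\psi^{-1}$; Young's inequality therefore gives $\varphi(s)\,y\leq\varphi^{-1}(\varphi(s))+\varPsi(y)=s+\varPsi(y)$ for all $s,y\geq0$. Taking $y=\alpha_n$ and using $\varPsi(\alpha_n)/\alpha_n\leq\Lambda(\alpha_n)$ from~\eqref{eq:defi_Lambda} gives $\alpha_n\beta_2\varphi(s_{n+1})\leq\beta_2 s_{n+1}+\beta_2\alpha_n\Lambda(\alpha_n)$. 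Substituting and using $\beta_2<1/(\Ctc\Cerr)$ turns the recursive estimate into
\[
\alpha_n(1-\beta_1)\,d_{n+1}^2+\bigl(\tfrac{1}{\Ctc\Cerr}-\beta_2\bigr)s_{n+1}\ \leq\ \eta C_{\rm NL}\,s_n+\beta_2\alpha_n\Lambda(\alpha_n)+\err .
\]
Dropping the nonnegative $s_{n+1}$-term, dividing by $\alpha_n(1-\beta_1)$, and using $(1-\beta_1)^{-1}\leq2$, $\err_n\leq\err$ together with the relabelling $n\mapsto n-1$ gives precisely~\eqref{eq:err_dec} with the asserted $\Phi_{\rm nl},\Phi_{\rm app},\Phi_{\rm noi}$; dropping instead the Bregman term leaves the auxiliary recursion $s_{n+1}\leq\kappa\bigl(\eta C_{\rm NL}s_n+\beta_2\alpha_n\Lambda(\alpha_n)+\err\bigr)$ with $\kappa:=(1/(\Ctc\Cerr)-\beta_2)^{-1}$.

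The self-improvement~\eqref{eq:nlest2} I would prove by induction on $n\geq1$. The base case $n=1$ is exactly the second entry in the maximum defining $\gamma_{\rm nl}$. For the step ($n\geq2$), apply the auxiliary recursion at index $n-2$, divide by $\alpha_{n-1}$, use $\alpha_{n-2}/\alpha_{n-1}\leq\Cdec$ from~\eqref{eq:parac} on each of the three resulting terms, and multiply by $2\eta C_{\rm NL}$; this produces $\Phi_{\rm nl}(n)\leq\bar\gamma\bigl(\Phi_{\rm nl}(n-1)+\Phi_{\rm app}(n-1)+\Phi_{\rm noi}(n-1)\bigr)$ with $\bar\gamma=\kappa\Cdec\,\eta C_{\rm NL}$, which is the constant in the statement. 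Since $\sqrt\Lambda$ is a concave index function one has $\Lambda(\alpha_{n-2})\leq\Cdec^2\Lambda(\alpha_{n-1})$, while $\alpha_{n-2}\geq\alpha_{n-1}$ gives $\Phi_{\rm noi}(n-1)\leq\Phi_{\rm noi}(n)$; feeding in the induction hypothesis for $\Phi_{\rm nl}(n-1)$ then yields $\Phi_{\rm nl}(n)\leq\bar\gamma\Cdec^2(1+\gamma_{\rm nl})\bigl(\Phi_{\rm app}(n)+\Phi_{\rm noi}(n)\bigr)$, and the induction closes because $\gamma_{\rm nl}\geq\Cdec^2\bar\gamma/(1-\Cdec^2\bar\gamma)$ is equivalent to $\bar\gamma\Cdec^2(1+\gamma_{\rm nl})\leq\gamma_{\rm nl}$ (the requirement $\Cdec^2\bar\gamma<1$ being what the smallness of $\eta$ provides).

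The derivation of the recursive estimate is mechanical — a line-by-line copy of the proof of Lemma~\ref{lem:2} — and the Young step is standard once $\varphi^{-1}$ and $\varPsi$ are recognised as conjugates. The main obstacle is the bookkeeping in the last step: one must track the $\Cdec$-powers entering through the parameter rule~\eqref{eq:parac} and through the replacement of $\varPsi(t)/t$ by its concave majorant $\Lambda$, and verify that the resulting fixed-point inequality for $\gamma_{\rm nl}$ is consistent, which is exactly the smallness condition on $\eta$ together with $\beta_2<1/(\Ctc\Cerr)$.
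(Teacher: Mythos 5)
Your proposal is correct and follows essentially the same route as the paper: derive the recursive estimate as in Lemma \ref{lem:2} but with the additive condition \eqref{eq:vieadd} replacing \eqref{eq:p1}, absorb $\alpha_n\beta_2\varphi(s_{n+1})$ via Young's inequality for the conjugate pair $\varphi^{-1}$, $\varPsi$, divide by $\alpha_n(1-\beta_1)$ to get the decomposition, and then close the induction for \eqref{eq:nlest2} using the $s$-recursion, $\alpha_{n-1}/\alpha_n\le\Cdec$, the concavity of $\sqrt{\Lambda}$ and the fixed-point inequality $\Cdec^2\bar\gamma(1+\gamma_{\rm nl})\le\gamma_{\rm nl}$. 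The constants and bookkeeping match the paper's proof, so no gaps to report.
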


\begin{proof}
Similar to the proof of Lemma \ref{lem:2} the assumptions imply the iterative estimate
\[
\alpha_n \left(1-\beta_1\right)d_{n+1}^2 + \frac{1}{\Ctc\Cerr} s_{n+1}\leq \eta \left(\Cerr+\frac{1}{\Cerr}\right) s_n + \alpha_n\beta_2 \varphi\left(s_{n+1}\right)+\err
\]
for all $n \in \mathbb N$ in case of of \ref{ass:nl}B and
\[
\alpha_n \left(1- \beta_1\right) d_{n+1}^2 + \frac{1}{\Ctc\Cerr} s_{n+1} \leq 2 \eta \Cerr s_n + \alpha_n \beta_2 \varphi \left(s_{n+1}\right) + \err
\]
for all $n\in \mathbb N$ in case of \ref{ass:nl}A. Now Young's inequality $ab \leq \int_0^a \psi\left(t\right) \,\mathrm dt + \int_0^b\psi^{-1}\left(s\right) \,\mathrm d s$ (cf. \cite[Thm. 156]{hlp67})  with the index function $\psi$ 
defined in \eqref{eq:defi_psi} applied to the second-last term yields 
\[
\alpha_n\beta_2 \varphi\left(s_{n+1}\right) \leq \beta_2 s_{n+1} + \beta_2 \varPsi \left(\alpha_n\right)\,.
\]
This shows that 
\begin{equation}\label{eq:reeadd}
\alpha_n \left(1-\beta_1\right)d_{n+1}^2 + \left(\frac{1}{\Ctc\Cerr} -\beta_2\right) s_{n+1} \leq \eta C_{\rm NL} s_n  + \beta_2 \varPsi\left(\alpha_n\right) +\err
\end{equation}
for all $n \in \mathbb N$ both in case \ref{ass:nl}A and in case \ref{ass:nl}B. Together with $1/(1-\beta_1) \leq 2$ and $\frac{\varPsi \left(t\right)}{t} \leq \Lambda \left(t\right)$ this yields 
\[
d_{n+1}^2 \leq 2\eta C_{\rm NL} \frac{s_n}{\alpha_n}  + 2\beta_2 \Lambda\left(\alpha_n\right) +2\frac{\err}{\alpha_n}.
\]
for all $n \geq 0$ which is by definition \eqref{eq:err_dec}. 

From \eqref{eq:reeadd} we conclude that 
\[
s_{n+1} \leq \frac{\eta C_{\rm NL}}{\frac{1}{\Ctc\Cerr} - \beta_2} s_n + \frac{\beta_2}{\frac{1}{\Ctc\Cerr} - \beta_2}\varPsi\left(\alpha_n\right) + \frac{\err}{\frac{1}{\Ctc\Cerr} - \beta_2}.
\]
Now multiplying by $2\eta C_{\rm NL}/\alpha_{n+1}$ we find
\[
\Phi_{\rm nl}\left(n+2\right)\leq  \bar \gamma\Phi_{\rm nl}\left(n+1\right) + \bar \gamma\Phi_{\rm app}\left(n+1\right) + \bar \gamma\Phi_{\rm noi}\left(n+1\right)
\]
for all $n \in \mathbb N$.
Now we prove \eqref{eq:nlest2} by induction:
For $n = 1$ the assertion is true by the definition of $\gamma_{\rm nl}$. 
Now let \eqref{eq:nlest2} hold for some $n$. Then by the inequality above, 
the induction hypothesis, \eqref{eq:decayLambda}, 
and the monotonicity of $\Phi_{\rm noi}$ we find that 
\begin{align*}
\Phi_{\rm nl}\left(n+1\right)&\leq  \bar \gamma\Phi_{\rm nl}\left(n\right) + \bar \gamma\Phi_{\rm app}\left(n\right) + \bar \gamma\Phi_{\rm noi}\left(n\right) \\[0.1cm]
& \leq \bar \gamma \left(1+\gamma_{\rm nl}\right) \left(\Phi_{\rm app}\left(n\right) + \Phi_{\rm noi}\left(n\right)\right) \\[0.1cm]
& \leq \Cdec^2 \bar \gamma \left(1+\gamma_{\rm nl}\right) \left(\Phi_{\rm app}\left(n+1\right) + \Phi_{\rm noi}\left(n+1\right)\right) \, .
\end{align*}
The definition of $\gamma_{\rm nl}$ implies $\Cdec^2 \bar \gamma \left(1+\gamma_{\rm nl}\right) \leq \gamma_{\rm nl}$ and hence the assertion is shown.
\end{proof}

Lemma \ref{lem:err_decomp} allows us to apply the Lepski{\u\i} balancing
principle as developed in \cite{MP:03,bh05,mathe:06,bhm09} as a posteriori 
stopping rule. Since the balancing principle requires a metric on $\Xspace$ 
we assume that \eqref{eq:lower_bound_breg} holds true. As already mentioned, this is for example the case if $\Xspace$ is a $q$-convex Banach space and $\mathcal{R}(u)  = \|u\|^q$. 

Together with \eqref{eq:lower_bound_breg} and taking the $q$-th root it follows from Lemma \ref{lem:err_decomp} that
\[
\|u_{n}-u^{\dagger}\| \leq \Cbreg^{\frac1q}\left(\Phi_{\rm nl}\left(n\right)^{\frac1q} +
\Phi_{\rm app}\left(n\right)^{\frac1q}+\Phi_{\rm noi}\left(n\right)^{\frac1q}\right).
\]
Whereas $\Phi_{\rm app}$ and $\Phi_{\rm nl}$ are typically unknown, 
it is important to note that the error component $\Phi_{\rm noi}$ is known
if an error bound $\err$ is available. 
Therefore, the following Lepski{\u\i} balancing principle can be implemented:
\begin{subequations}\label{eqs:lepskij}
\begin{align}
\label{eq:defi_Nmax}
N_ {\rm max} &:= \min \left\{n \in \mathbb N ~\big|~ \Cbreg^{\frac1q}\Phi_{\rm noi}\left(n\right)^{\frac1q} \geq 1\right\}\\
\label{eq:lepskij}
n_{\rm bal} &:= \min \left\{n \in  \{1, \dots, N_{\rm max}\} ~\big|~ 
\forall m \geq n\; \left\Vert u_n - u_m\right\Vert \leq c \Phi_{\rm noi}^{\frac1q}\left(m\right) 
\right\} 
\end{align}
\end{subequations}
Moreover, it is important to note that $\Phi_{\rm noi}$ is increasing and $\Phi_{\rm app}$ is
decreasing. Therefore, the general theory developed in the references above 
can be applied, and we obtain the following convergence result:

\begin{thm}[{Convergence rates under Assumption~\ref{ass:additive_sc}B}]\label{thm:cradd} 
 Let the assumptions of Lemma \ref{lem:err_decomp} hold true and assume that 
 $\mathcal D_{\mathcal R}^{u^*} \left(u_0,u^\dagger\right)$ and 
$\S{F\left(u_0\right)}{g^\dagger}$ are sufficiently small. 
\begin{enumerate}
\item \emph{exact data:}\\
Then the iterates $(u_n)$ defined by \eqref{eqs:method} with exact data $\gdelta = g^{\dagger}$ fulfill
\begin{equation}\label{eq:rateAdditiveEx}
\mathcal D^{u^*}_{\mathcal R} \left(u_n,u^\dagger\right) = \mathcal O \left(\Lambda \left(\alpha_n\right)\right), \qquad n \to \infty.
\end{equation}
\item \emph{a priori stopping rule:}\\
For noisy data and the stopping rule
\[
n_* := \min\left\{n \in \mathbb N ~\big|~ \varPsi\left(\alpha_n\right) \leq \err \right\}
\]
with $\varPsi$ defined in \eqref{eq:defi_varPsi} we obtain the convergence rate
\begin{equation}\label{eq:conv_apriori}
\mathcal D^{u^*}_{\mathcal R} \left(u_{n_*},u^\dagger\right) = \mathcal O \left(\Lambda \left(\varPsi^{-1} \left(\err\right)\right)\right)\,,
 \qquad \err \to 0.
\end{equation}
\item \emph{Lepski{\u\i}-type stopping rule:}\\
Assume that \eqref{eq:lower_bound_breg} holds true.
Then the Lepski{\u\i} balancing principle  \eqref{eq:lepskij} with 
$c =\Cbreg^{\frac1q}4 \left(1+ \gamma_{\rm nl}\right)$ leads to the convergence rate
\[
\left\Vert u_{n_{\rm bal}}- u^\dagger\right\Vert^q = \mathcal O \left(\Lambda \left(\varPsi^{-1} \left(\err\right)\right)\right), \qquad \err \to 0.
\]
\end{enumerate}
\end{thm}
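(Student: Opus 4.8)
The plan is to derive all three parts of Theorem~\ref{thm:cradd} from the error decomposition in Lemma~\ref{lem:err_decomp}, treating the exact-data case, the a priori rule, and the Lepski\u\i\ rule in turn. For the \textbf{exact data case}, I would set $\err=0$, so that $\Phi_{\rm noi}\equiv 0$, and combine $d_n^2\leq \Phi_{\rm nl}(n)+\Phi_{\rm app}(n)$ with the bound $\Phi_{\rm nl}(n)\leq\gamma_{\rm nl}\Phi_{\rm app}(n)$ from \eqref{eq:nlest2}. Since $\Phi_{\rm app}(n)=2\beta_2\Lambda(\alpha_{n-1})$ and $\Lambda(\alpha_{n-1})\leq\Cdec^2\Lambda(\alpha_n)$ by the decay property \eqref{eq:decayLambda} (with $q=1$), this immediately gives $\mathcal D^{u^*}_{\mathcal R}(u_n,u^\dagger)=d_n^2=\mathcal O(\Lambda(\alpha_n))$, which is \eqref{eq:rateAdditiveEx}.

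For the \textbf{a priori stopping rule}, I would again use $d_{n_*}^2\leq (1+\gamma_{\rm nl})(\Phi_{\rm app}(n_*)+\Phi_{\rm noi}(n_*))$. The stopping index $n_*=\min\{n:\varPsi(\alpha_n)\leq\err\}$ is chosen precisely so that $\varPsi(\alpha_{n_*})\leq\err$ while $\varPsi(\alpha_{n_*-1})>\err$. The first inequality together with $\varPsi(t)/t\leq\Lambda(t)$ controls $\Phi_{\rm app}(n_*)=2\beta_2\Lambda(\alpha_{n_*-1})$: one bounds $\Lambda(\alpha_{n_*-1})\leq\Cdec^2\Lambda(\alpha_{n_*})$ and then, using that $\varPsi(\alpha_{n_*})\leq\err$ gives $\alpha_{n_*}\leq\varPsi^{-1}(\err)$ and hence $\Lambda(\alpha_{n_*})\leq\Lambda(\varPsi^{-1}(\err))$ by monotonicity. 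For $\Phi_{\rm noi}(n_*)=2\err/\alpha_{n_*-1}$, the failure of the stopping criterion at $n_*-1$ yields $\err<\varPsi(\alpha_{n_*-1})$, so $\err/\alpha_{n_*-1}<\varPsi(\alpha_{n_*-1})/\alpha_{n_*-1}\leq\Lambda(\alpha_{n_*-1})\leq\Cdec^2\Lambda(\alpha_{n_*})\leq\Cdec^2\Lambda(\varPsi^{-1}(\err))$. Combining the two bounds gives \eqref{eq:conv_apriori}.

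For the \textbf{Lepski\u\i-type stopping rule}, I would invoke the abstract balancing-principle machinery from \cite{MP:03,bh05,mathe:06,bhm09}. The inputs are: the decomposition $\|u_n-u^\dagger\|\leq\Cbreg^{1/q}(\Phi_{\rm nl}(n)^{1/q}+\Phi_{\rm app}(n)^{1/q}+\Phi_{\rm noi}(n)^{1/q})$, absorbed via \eqref{eq:nlest2} into $\|u_n-u^\dagger\|\leq\Cbreg^{1/q}(1+\gamma_{\rm nl})^{1/q}(\Phi_{\rm app}(n)^{1/q}+\Phi_{\rm noi}(n)^{1/q})$ (using $(a+b)^{1/q}\leq a^{1/q}+b^{1/q}$); the fact that $\Phi_{\rm noi}^{1/q}$ is known and increasing while $\Phi_{\rm app}^{1/q}$ is unknown and decreasing; and the quasi-optimality of the balancing index $n_{\rm bal}$ which, for the stated choice $c=\Cbreg^{1/q}4(1+\gamma_{\rm nl})$, yields $\|u_{n_{\rm bal}}-u^\dagger\|\leq C\min_n(\Phi_{\rm app}(n)^{1/q}+\Phi_{\rm noi}(n)^{1/q})$. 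The minimum is estimated by plugging in the a priori index $n_*$ from part~2, and this turns the minimum into $\mathcal O(\Lambda(\varPsi^{-1}(\err))^{1/q})$; raising to the $q$-th power gives the claim. The cutoff $N_{\rm max}$ in \eqref{eq:defi_Nmax} ensures the balancing search is over a finite set and that the error bound stays $O(1)$.

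The main obstacle is not any single hard estimate but rather the careful interface with the abstract Lepski\u\i\ theory: one must verify that the present situation really fits the hypotheses of the cited balancing-principle results — in particular that the ``oracle'' function is $\Phi_{\rm app}^{1/q}+\Phi_{\rm noi}^{1/q}$ with the correct monotonicity, that the triangle inequality used in \eqref{eq:lepskij} is the genuine $\Xspace$-norm (which is why \eqref{eq:lower_bound_breg} is assumed), and that the constant $c$ is chosen large enough relative to the decomposition constant $\Cbreg^{1/q}(1+\gamma_{\rm nl})^{1/q}$ for the balancing argument to go through. The bookkeeping around the $\Cdec^2$ factors from \eqref{eq:decayLambda} when passing between consecutive indices is routine but must be done consistently in all three parts.
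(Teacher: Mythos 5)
Your proposal follows essentially the same route as the paper: parts 1 and 2 use exactly the decomposition of Lemma~\ref{lem:err_decomp} together with \eqref{eq:nlest2}, \eqref{eq:decayLambda} and $\varPsi(t)/t\le\Lambda(t)$ in the same chain of inequalities, and part 3 invokes the abstract balancing result (the paper uses \cite[Cor.\ 1]{mathe:06} after absorbing $\Phi_{\rm nl}$) and bounds the oracle minimum by plugging in the a priori index $n_*$. The only details the paper carries out that you leave implicit are the checks that $n_*\le N_{\rm max}$ (via $\Phi_{\rm noi}(n_*)<2\err/\varPsi^{-1}(\err)\le 2\Lambda(\varPsi^{-1}(\err))$, which is small for small $\err$) and that $\Phi_{\rm app}(N_{\rm max})\le 1$, i.e.\ the admissibility conditions of the balancing lemma, which you correctly flag as interface conditions to verify.
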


\begin{proof}
By \eqref{eq:err_dec} and \eqref{eq:nlest2} we find $d_n^2 \leq \left(1+\gamma_{\rm nl}\right) \left(\Phi_{\rm app}\left(n\right) + \Phi_{\rm noi} \left(n\right)\right)$ which implies part 1 and
\[
d_{n_*}^2 \leq  \left(1+\gamma_{\rm nl}\right) \left(2 \beta_2 \Lambda \left(\alpha_{n_*-1}\right) + 2 \frac{\err}{\alpha_{n_*-1}}\right).
\]
Using the definition of $n_*$ and \eqref{eq:decayLambda} we have
\[
\frac{\err}{\alpha_{n_*-1}} 
\leq \frac{\varPsi\left(\alpha_{n_*-1}\right)}{\alpha_{n_*-1}}
 \leq \Lambda\left(\alpha_{n_*-1}\right) \leq \Cdec^2\Lambda\left(\alpha_{n_*}\right)\,.
\]
Using the definition of $n_*$ again we obtain $\alpha_{n_*}\leq \varPsi^{-1}\left(\err\right)$.
Putting these estimates together yields \eqref{eq:conv_apriori}.

To prove part 3 assume that $\err$ is sufficiently small in the following. We use again $d_n^2 \leq \left(1+\gamma_{\rm nl}\right) \left(\Phi_{\rm app}\left(n\right) + \Phi_{\rm noi} \left(n\right)\right)$, which yields by \eqref{eq:lower_bound_breg} the estimate
\[
\left\Vert u_n - u^\dagger\right\Vert \leq \Cbreg^{\frac1q}\left(1+\gamma_{\rm nl}\right)^{\frac1q}\left(\Phi_{\rm app}\left(n\right)^{\frac1q} + \Phi_{\rm noi} \left(n\right)^{\frac1q}\right)
\]
for all $n \in \left\{1, ..., N_{\rm max}\right\}$. Define $\psi\left(j\right) := 2 \Cbreg^{\frac1q}\left(1+\gamma_{\rm nl}\right)^{\frac1q} \Phi_{\rm noi} \left(N_{\rm max} + 1 - j\right)$ and $\phi\left(j\right) := 2 \Cbreg^{\frac1q}\left(1+\gamma_{\rm nl}\right)^{\frac1q} \Phi_{\rm app} \left(N_{\rm max} + 1 - j\right)$ and note that $\phi\left(1\right) \leq \psi\left(1\right)$ if and only if $\Phi_{\rm app} \left(N_{\rm max}\right) \leq 1$. This is the case if $N_{\rm max}$ is sufficiently large which holds true for sufficiently small $\err$ as assumed. Thus by \eqref{eq:decayLambda} we can apply \cite[Cor. 1]{mathe:06} to gain
\[
\left\Vert u_{n_{\rm bal}} - u^\dagger\right\Vert \leq 6 \left(1+\gamma_{\rm nl}\right)^{\frac1q}\Cdec^{\frac2q}\Cbreg^{\frac1q} \min\limits_{n \leq N_{\rm max}} \left(\Phi_{\rm app}\left(n\right)^{\frac1q} + \Phi_{\rm noi}\left(n\right)^{\frac1q}\right).
\]
If we can show that $n_* \in \left\{1, ..., N_{\rm max}\right\}$ we obtain the assertion as in part 2. Since by definition $\alpha_{n_*-1} > \varPsi^{-1} \left(\err\right)$, we have
\[
\Phi_{\rm noi} \left(n_*\right) = 2 \frac{\err}{\alpha_{n_*-1}} < 2 \frac{\err}{\varPsi^{-1} \left(\err\right)} \leq 2 \Lambda \left(\varPsi^{-1} \left(\err\right)\right)
\]
and hence $n_* \leq N_{\rm max}$ if $\err$ is sufficiently small.
\end{proof}

\section{Relation to previous results}\label{sec:spec}
The most commonly used source conditions are H\"older-type and logarithmic source conditions, which correspond to 
\begin{subequations}
\begin{eqnarray}
\label{eq:hoelder_sc}
\varphi_{\nu} \left(t\right) &:=& t^{\nu},\qquad \nu \in \left(0,1/2\right],\\
\label{eq:logarithmic_sc}
\bar\varphi_p\left(t\right) &:=& \begin{cases}  
\left(-\ln \left(t\right)\right)^{-p}& \text{if } 0 < t \leq \exp \left(-p-1\right),\\
0 & \text{if }t = 0, 
\end{cases}
\qquad p>0,
\end{eqnarray}
\end{subequations}
respectively. For a number of inverse problems such source conditions
have been shown to be equivalent to natural smoothness
assumptions on the solution in terms of Sobolev space regularity
(see \cite{ehn96,h00}). 
We have restricted the range of H\"older indices to 
$\nu \in \left(0,1/2\right]$ since for $\nu>1/2$ 
the monotonicity assumption \eqref{eq:condvarphi} is violated. By computing the second derivative, one
can easily see that the functions $\bar\varphi_p$ are concave on the
interval $[0,\exp(-p-1)]$, and condition \eqref{eq:condvarphi} is trivial. 
If necessary, the functions $\bar\varphi_p$ can be extended to concave
functions on $[0,\infty)$ by suitable affine linear function on 
$(\exp(-p-1),\infty)$. 

We note the explicit form of the abstract error estimates \eqref{eqs:conv_in_XY} 
for these classes of source conditions as a corollary:
\begin{cor}[H\"older and logarithmic source conditions]
Suppose the assumptions of Theorem \ref{thm:cr} hold true.
\begin{enumerate}
\item If $\varphi$ in \eqref{eq:scgen} is of the form 
\eqref{eq:hoelder_sc} and 
$n_*  := \min\left\{n \in \mathbb N ~\big|~ \alpha_n \leq \tau \err_n^{\frac{1}{1+2\nu}}\right\}$ with $\tau\geq 1$ sufficiently large, 
then 
\begin{subequations}
\begin{align}
\label{eq:rate_Hoelder}
\mathcal D^{u^*}_{\mathcal R} \left(u_{n_*},u^\dagger\right) 
&= \mathcal O \left(\err_{n_*}^{\frac{2\nu}{1+2\nu}}\right).
\end{align}
\end{subequations}
\item If $\varphi=\bar\varphi_p$, 
$\bar n_*   := \min\left\{n \in \mathbb N ~\big|~ \alpha_n^2 \leq \tau \err_n\right\}$ 
and $\tau\geq 1$ sufficiently large, then 
\begin{subequations}
\begin{align}
\label{eq:rate_logarithmic}
\mathcal D^{u^*}_{\mathcal R} \left(u_{\bar n_*},u^\dagger\right) 
&= \mathcal O \left(\bar\varphi_{2p}\left(\err_{\bar n_*}\right)\right).
\end{align}
\end{subequations}
\end{enumerate}
\end{cor}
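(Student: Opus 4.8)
The statement is a direct specialization of the abstract error estimate \eqref{eq:conv_in_X} of Theorem~\ref{thm:cr}, namely $\mathcal D^{u^*}_{\mathcal R}(u_{n_*},u^\dagger) = \mathcal O(\varphi^2(\Theta^{-1}(\err_{n_*})))$, to the two concrete index functions. The plan is to compute $\Theta(t)=t\varphi^2(t)$ and its inverse $\Theta^{-1}$ explicitly, to check that the stopping rules written in the corollary agree with, or are comparable to, the rule \eqref{eq:stopr} of Theorem~\ref{thm:cr}, and then to simplify $\varphi^2\circ\Theta^{-1}$. That $\varphi_\nu$ with $\nu\in(0,\frac12]$ and $\bar\varphi_p$ satisfy all hypotheses of Theorem~\ref{thm:cr} (concavity of $\varphi$ and monotonicity of $t\mapsto\varphi(t)/\sqrt t$) has already been checked in the text around \eqref{eq:hoelder_sc}, so only these elementary computations remain.

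For $\varphi=\varphi_\nu$ one has $\Theta(t)=t^{1+2\nu}$, hence $\Theta^{-1}(s)=s^{1/(1+2\nu)}$, and the stopping rule $\alpha_n\le\tau\err_n^{1/(1+2\nu)}$ is equivalent to $\Theta(\alpha_n)=\alpha_n^{1+2\nu}\le\tau^{1+2\nu}\err_n$, i.e.\ it is exactly \eqref{eq:stopr} with $\tau$ replaced by $\tau^{1+2\nu}$ (which is $\ge 1$ and as large as needed whenever $\tau$ is). Then \eqref{eq:conv_in_X} yields $\mathcal D^{u^*}_{\mathcal R}(u_{n_*},u^\dagger)=\mathcal O(\varphi_\nu^2(\err_{n_*}^{1/(1+2\nu)}))=\mathcal O(\err_{n_*}^{2\nu/(1+2\nu)})$, which is \eqref{eq:rate_Hoelder}.

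For $\varphi=\bar\varphi_p$ we have $\Theta(t)=t(-\ln t)^{-2p}$ for small $t$, a strictly increasing index function with increasing inverse (cf.\ \eqref{eqs:defiThetaf}). Here the rule $\alpha_n^2\le\tau\err_n$ is not literally \eqref{eq:stopr}, but since $\Theta(t)/t^2=(-\ln t)^{-2p}/t\to\infty$ as $t\searrow 0$ we have $\alpha_n^2\le\Theta(\alpha_n)$ for $\alpha_n$ small, so the stopping index $\bar n_*$ of the corollary satisfies $\bar n_*\le n_*$, where $n_*$ is the index of \eqref{eq:stopr} for the same $\tau$. Consequently the bound $\mathcal D^{u^*}_{\mathcal R}(u_n,u^\dagger)=\mathcal O(\varphi^2(\alpha_n))$ of Theorem~\ref{thm:cr}, valid for all $n\le n_*$, holds at $n=\bar n_*$, and it remains to estimate $\varphi^2(\alpha_{\bar n_*})$ by $\bar\varphi_{2p}(\err_{\bar n_*})$. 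From $\alpha_{\bar n_*}^2\le\tau\err_{\bar n_*}$ one gets $-\ln\alpha_{\bar n_*}\ge-\frac12\ln(\tau\err_{\bar n_*})\ge\frac14(-\ln\err_{\bar n_*})$ once $\err_{\bar n_*}$ is small, whence $\varphi^2(\alpha_{\bar n_*})=(-\ln\alpha_{\bar n_*})^{-2p}\le 4^{2p}(-\ln\err_{\bar n_*})^{-2p}=4^{2p}\bar\varphi_{2p}(\err_{\bar n_*})$, i.e.\ \eqref{eq:rate_logarithmic}. Equivalently one may verify $\varphi^2(\Theta^{-1}(s))=\mathcal O(\bar\varphi_{2p}(s))$ directly from $\ln\Theta(t)=\ln t-2p\ln(-\ln t)\sim\ln t$ as $t\searrow 0$.

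The only point that is not pure bookkeeping is the logarithmic case: one must keep track of the ordering of the two stopping indices and, in the asymptotics, use that the cross term $2p\ln(-\ln t)$ is negligible against $\ln t$ as $t\searrow 0$, so that passing through $\Theta^{-1}$ costs only a constant factor in the exponent of the logarithm. Everything else reduces to elementary algebra of powers and logarithms together with a single application of \eqref{eq:conv_in_X}.
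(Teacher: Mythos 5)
Your proposal is correct and follows essentially the same route as the paper: in the H\"older case you identify the corollary's stopping rule with \eqref{eq:stopr} (up to replacing $\tau$ by $\tau^{1+2\nu}$) and apply \eqref{eq:conv_in_X}, and in the logarithmic case you show $\bar n_*\le n_*$ via $\alpha_n^2\le\Theta(\alpha_n)$ for small $\alpha_n$, invoke the pre-stopping estimate \eqref{eq:cr_ex_data} at $n=\bar n_*$, and absorb the square root of $\tau\err_{\bar n_*}$ into a constant inside the logarithm --- exactly the paper's argument, only with explicit constants instead of $\mathcal O$-notation.
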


\begin{proof}
In the case of H\"older source conditions we already remarked that 
the conditions in Assumption~\ref{ass:sc}A are satisfied $\nu\in (0,1/2]$,
and we have $\Theta\left(t\right) = t^{1+2\nu}$, 
$\Theta^{-1}(\xi) = \xi^{1/(1+2\nu)}$. 

In the case of logarithmic source conditions we have 
$\Theta\left(t\right) = t \cdot \bar\varphi_{2p}\left(t\right).$
The function $\Theta^{-1}$ does not have an algebraic representation, but 
its asymptotic behavior at $0$ can be computed:
$
\Theta^{-1}\left(t\right) = \frac{t}{\bar\varphi_{2p}\left(t\right)}\left(1+o \left(1\right)\right)$ as $t \searrow 0.$
This implies that $\bar\varphi_{p}\left(\Theta^{-1}\left(t\right)\right) 
= \bar\varphi_p \left(t\right) \left(1 + o\left(1\right)\right)$ as $t\searrow 0$.
Note that the proposed stopping rule $\bar n_*$, which can be implemented 
without knowledge of the smoothness index $p$,  deviates from the stopping rule
\[
n_*:=\min\left\{n \in \mathbb N ~\big|~ \alpha_n \bar\varphi_{2p}(\alpha_n) 
\leq \tau \err_n\right\}
\]
proposed in Theorem \ref{thm:cr}. Asymptotically we
have $n_*>\bar n_*$, and hence  \eqref{eq:cr_ex_data} holds for $n=\bar n_*$. 
Therefore, we still get the optimal rates since 
\[
\mathcal D^{u^*}_{\mathcal R} \left(u_{\bar n_*}, u^\dagger\right) 
= \mathcal O \left(\bar\varphi_{2p} \left(\alpha_{\bar n_*}\right)\right)
= \mathcal O \left(\bar\varphi_{2p} \left(\sqrt{\tau\err_{\bar n_*}}\right)\right)
= \mathcal O \left(\bar\varphi_{2p} \left(\err_{\bar n_*}\right)\right)\,.
\]
\end{proof}

Recall from section \ref{sec:assumptions} that we can choose 
\[
\err \equiv \delta^r\quad \mbox{if}\quad 
\|\gdelta-g^{\dagger}\|_{\Yspace}\leq \delta
\quad \mbox{and}\quad \S{g_1}{g_2}=\|g_1-g_2\|_{\Yspace}^r, ~\calT =\calS
\]
with $r\in [1,\infty)$. In particular, 
if $\Xspace$ and $\Yspace$ are Hilbert spaces, $r=2$ and $\mathcal R = \left\Vert u-u_0\right\Vert^2$ for some $u_0 \in \Xspace$, then \eqref{eq:rate_Hoelder} 
and \eqref{eq:rate_logarithmic} translate into the rates
\begin{align*}
\|u_{n_*}-u\| &= \mathcal O  \left(\delta^{\frac{2\nu}{1+2\nu}}\right),\\
\|u_{n_*}-u\| &= \mathcal O  \left((-\ln \delta)^{-p}\right),
\end{align*}
respectively, for $\delta\to 0$ (see, e.g., \cite{kns08}), 
which are known to be optimal for linear inverse problems. 

It remains to discuss the relation of Assumption~\ref{ass:nl} to the standard tangential cone condition:
\begin{lem}[tangential cone condition]\label{lem:tan_cone}
Let $\S{g_1}{g_2}=\T{g_1}{g_2}=\|g_1-g_2\|_{\Yspace}^r$. If $F$ fulfills the tangential cone condition
\begin{equation}\label{eq:tan_cone}
\left\Vert F\left(u\right) +F'\left(u;v-u\right)- F\left(v\right) \right\Vert_{\Yspace} \leq \bar \eta \left\Vert F\left(u\right) - F\left(v\right) \right\Vert_{\Yspace} \qquad \text{for all }u,v \in \mathfrak B
\end{equation}
with $\bar \eta\geq 0$ sufficiently small, then Assumptions \ref{ass:nl}A and
\ref{ass:nl}B are satisfied.
\end{lem}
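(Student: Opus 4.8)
The plan is to reduce the assertion to elementary Banach-space norm estimates. Since here $\calS=\calT$ is one and the same functional and, moreover, the double inequalities \eqref{eq:tca} and \eqref{eq:tcb} have identical structure — \eqref{eq:tca} being required for every $\gdelta\in\Zspace$ and \eqref{eq:tcb} only for the particular element $g^\dagger\in\Yspace$ — it suffices to fix an arbitrary $g\in\Yspace$ and $u,v\in\mathfrak{B}$ and to sandwich $\|F(u)+F'(u;v-u)-g\|_{\Yspace}^r$ between constant multiples of $\|F(v)-g\|_{\Yspace}^r$ and $\|F(u)-g\|_{\Yspace}^r$. Both Assumption~\ref{ass:nl}A and Assumption~\ref{ass:nl}B then follow simultaneously.

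For the upper bound I would start from the decomposition $F(u)+F'(u;v-u)-g=\bigl(F(v)-g\bigr)+\bigl(F(u)+F'(u;v-u)-F(v)\bigr)$ and combine the triangle inequality, the tangential cone condition \eqref{eq:tan_cone}, and $\|F(u)-F(v)\|_{\Yspace}\le\|F(u)-g\|_{\Yspace}+\|F(v)-g\|_{\Yspace}$ to get
\[
\bigl\|F(u)+F'(u;v-u)-g\bigr\|_{\Yspace}\le (1+\bar\eta)\,\|F(v)-g\|_{\Yspace}+\bar\eta\,\|F(u)-g\|_{\Yspace}\,.
\]
Running the same computation with the roles reversed, i.e.\ starting from $F(v)-g=\bigl(F(u)+F'(u;v-u)-g\bigr)-\bigl(F(u)+F'(u;v-u)-F(v)\bigr)$ and dividing by $1-\bar\eta$ (admissible since $\bar\eta$ is small), gives
\[
\|F(v)-g\|_{\Yspace}\le \frac{1}{1-\bar\eta}\bigl\|F(u)+F'(u;v-u)-g\bigr\|_{\Yspace}+\frac{\bar\eta}{1-\bar\eta}\,\|F(u)-g\|_{\Yspace}\,.
\]

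It then only remains to raise both inequalities to the power $r\ge1$. Applying $(a+b)^r\le 2^{r-1}(a^r+b^r)$ (used already in Example~\ref{ex:errors}) to the first inequality produces the upper estimate in \eqref{eq:tca}/\eqref{eq:tcb} with $\Ctc=2^{r-1}(1+\bar\eta)^r\ge1$ and $\eta=2^{r-1}\bar\eta^r$; applying it to the second inequality and rearranging produces the lower estimate with $\Ctc=2^{r-1}(1-\bar\eta)^{-r}\ge1$ and $\eta=\bar\eta^r$. Keeping the larger $\Ctc$ and the larger $\eta$ from the two computations — for small $\bar\eta$ this is $\Ctc=2^{r-1}(1-\bar\eta)^{-r}$, $\eta=2^{r-1}\bar\eta^r$ — yields constants that work in both directions at once, and $\eta\to0$ as $\bar\eta\to0$, which is precisely the smallness demanded in Assumption~\ref{ass:nl}.

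I do not expect any real obstacle: the lemma is a short, direct consequence of the triangle inequality. The only bookkeeping is the passage to the $r$-th power — where it matters that the $\bar\eta$-factor rides on the term multiplying $\|F(u)-g\|_{\Yspace}$, so that after exponentiation the coefficient of $\S{F(u)}{g}$ is of order $\bar\eta^r$ and hence tends to $0$ — together with the routine verification that $\Ctc\ge1$. For $r=1$ the two displayed estimates are themselves the claim, with $\Ctc=(1-\bar\eta)^{-1}$ and $\eta=\bar\eta$.
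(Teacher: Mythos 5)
Your proposal is correct and follows essentially the same route as the paper's proof: the decomposition via the triangle inequality combined with the tangential cone condition \eqref{eq:tan_cone} and the elementary power inequality $(a+b)^r\le 2^{r-1}(a^r+b^r)$, verified for arbitrary $g\in\Yspace$ so that \ref{ass:nl}A and \ref{ass:nl}B follow at once. The only (harmless) difference is bookkeeping: you combine the norm estimates first and exponentiate once, dividing by $1-\bar\eta$ for the lower bound, whereas the paper exponentiates immediately and uses $|a-b|^r\ge 2^{1-r}a^r-b^r$; both yield $\eta=\mathcal{O}(\bar\eta^r)$ and an admissible $\Ctc\ge 1$, your constants being marginally sharper.
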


\begin{proof}
Using the inequality $\left(a+b\right)^r \leq 2^{r-1}\left(a^r + b^r\right)$, $a,b \geq0$ we find that
\begin{align*}
&\left\Vert F\left(u\right) + F'\left(u;v-u\right) - g\right\Vert_{\Yspace}^r \\[0.1cm]
\leq& \left(\left\Vert F\left(u\right) + F'\left(u;v-u\right) -F\left(v\right)\right\Vert_{\Yspace} +\left\Vert F\left(v\right) - g\right\Vert_{\Yspace}\right)^r \\[0.1cm]
\leq & 2^{r-1}\bar \eta ^r \left\Vert F\left(u\right) - F\left(v\right) \right\Vert_{\Yspace}^r+2^{r-1}\left\Vert F\left(v\right) - g\right\Vert_{\Yspace}^r\\[0.1cm]
\leq &2^{2r-2} \bar \eta ^r \left\Vert F\left(u\right) - g\right\Vert_{\Yspace}^r + \left(2^{r-1} + \bar \eta^r 2^{2r-2}\right) \left\Vert F\left(v\right) - g\right\Vert_{\Yspace}^r.
\end{align*}
Moreover, with $\left|a-b\right|^r \geq 2^{1-r} a^r - b^r$, $a,b \geq 0$ we get
\begin{align*}
&\left\Vert F\left(u\right) + F'\left(u;v-u\right) - g\right\Vert_{\Yspace}^r \\[0.1cm]
\geq& \left|\left\Vert F\left(v\right) - g\right\Vert_{\Yspace}-\left\Vert F\left(u\right) + F'\left(u;v-u\right) -F\left(v\right)\right\Vert_{\Yspace}\right|^r \\[0.1cm]
\geq & 2^{1-r}\left\Vert F\left(v\right) - g \right\Vert_{\Yspace}^r- \bar \eta ^r \left\Vert F\left(u\right) - F\left(v\right)\right\Vert_{\Yspace}^r\\[0.1cm]
\geq &2^{1-r}\left\Vert F\left(v\right) - g) \right\Vert_{\Yspace}^r- 2^{r-1} \bar \eta ^r \left\Vert F\left(u\right) - g\right\Vert_{\Yspace}^r - 2^{r-1}\bar \eta^r \left\Vert F\left(v\right)-g\right\Vert_{\Yspace}^r\\[0.1cm]
=& \left(2^{1-r} - 2^{r-1} \bar \eta ^r \right)\left\Vert F\left(v\right) - g) \right\Vert_{\Yspace}^r - 2^{r-1}\bar \eta^r \left\Vert F\left(u\right) - g\right\Vert_{\Yspace}^r
\end{align*}
for all $g \in \Yspace$. Hence, \eqref{eqs:tc} holds true with $\eta = 2^{2r-2}\bar \eta^r$ and 
\[
\Ctc = \max \left\{\frac{1}{2^{1-r} - 2^{r-1} \bar \eta ^r},2^{r-1} + \bar \eta^r 2^{2r-2} \right\} \geq 1
\]
if $\bar \eta$ is sufficiently small. 
\end{proof}

\section{Convergence analysis for Poisson data}\label{sec:kl}
In this section we discuss the application of our results to inverse
problems with Poisson data. We first describe a natural continuous setting 
involving Poisson processes (see e.g.\ \cite{ab06}). The relation to
the finite dimensional setting discussed in the introduction is
described at the end of this section. 

Recall that a Poisson process with intensity $g\in L^1(\manifold)$ on some submanifold $\manifold\subset\mathbb{R}^d$ 
can be described as a random finite set of points 
$\{x_1,\dots,x_N\}\subset \manifold$ 
written as random measure $G=\sum_{n=1}^N \delta_{x_n}$ such that the following conditions are satisfied: 
\begin{enumerate}
\item
For all measurable subsets $\manifold'\subset\manifold$ the number 
$G(\manifold')=\#\{n:x_n\in\manifold'\}$ is Poisson distributed with mean
$\int_{\manifold'}g\,\mathrm dx$. 
\item
For disjoint measurable subsets $\manifold_1',\dots,\manifold_m'\subset \manifold$
the random variables $G(\manifold'_1),\dots, G(\manifold'_m)$ are stochastically 
independent. 
\end{enumerate}
Actually, the first condition can be replaced by the weaker assumption that
$\EW G(\manifold')= \int_{\manifold'}g\,\mathrm dx$. 
In photonic imaging $g$ will describe the photon density on the measurement
manifold $\manifold$, and $x_1,\dots,x_N$ with denote the positions of 
the detected photons. 
For a Poisson process $G$ with intensity $g$ and a 
measurable function $\psi:\manifold\to \mathbb{R}$ the following equalities
hold true whenever the integrals on the right hand sides exist
(see \cite{kingman:93}):
\begin{align}\label{eqs:integrals_Poisson}
&\EW \int\limits_{\manifold} \psi \,\mathrm dG 
= \int\limits_{\manifold} \psi g^{\dagger}\, \mathrm dx \,,\qquad \qquad 
\Var \int\limits_{\manifold} \psi \,\mathrm dG 
= \int\limits_{\manifold} \psi^2 g^{\dagger}\, \mathrm dx
\end{align}
We also introduce an exposure time $t>0$. Our convergence results 
will describe reconstruction errors in the limit $t\to\infty$. 
Assume the data $\tilde{G}_t$ are drawn from a Poisson 
process with intensity $tg^{\dagger}$ and define $G_t:=\frac{1}{t}\tilde{G}_t$.
The negative log-likelihood functional is given by  
\begin{equation}\label{eq:defi_calS}
\S{g}{G_t}= \begin{cases}\int\limits_{\manifold} g \,\mathrm dx 
- \int\limits_{\manifold} \ln g\,\mathrm dG_t
= \int\limits_{\manifold} g \,\mathrm dx - \frac{1}{t} \sum_{n=1}^N \ln g(x_n)\,,
&g\geq 0\\
\infty\,,&\mbox{else.}
\end{cases}
\end{equation}
We set $\ln 0 :=-\infty$, so $\S{g}{G_t}=\infty$ if  $g(x_n)=0$ for some 
$n=1,\dots,N$. 
Using \eqref{eqs:integrals_Poisson} we obtain the following formulas 
for the mean and variance of $\S{g}{G_t}$ if the integrals on the
right hand side exist:
\begin{align}\label{eqs:EW_VAR_KL}
\EW \S{g}{G_t} &= \int\limits_{\manifold}\left[g-g^{\dagger}\ln g\right]
\,\mathrm dx\,,
\qquad 
\Var\;  \S{g}{G_t} = \frac{1}{t}\int\limits_{\manifold} (\ln g)^2g^{\dagger}\,\mathrm dx\,.
\end{align}
The term $\mathfrak{s}(g^{\dagger})= \EW \S{g^\dagger}{G_t} 
= \int_{\manifold} [g^{\dagger}-g^{\dagger}\ln g^{\dagger}]\,\mathrm dx$ 
with $0\ln 0:=0$ 
is finite if $g^{\dagger}\in L^1(\manifold)\cap L^{\infty}(\manifold)$ and
$g^{\dagger}\geq 0$ as assumed below (see e.g.\ \cite[Lemma 2.2]{t04}).
Abbreviating the set $\{x\in\manifold:g^{\dagger}(x)>0\}$ by 
 $\{g^{\dagger}>0\}$ we set
\begin{equation}\label{eq:KL_cont}
\T{g}{g^{\dagger}} := \KL{g}{g^{\dagger}}:= 
\begin{cases}
\int\limits_{\{g^{\dagger}>0\}} 
\left[g-g^{\dagger}-g^{\dagger}\ln\frac{g}{g^{\dagger}}\right]\,\mathrm dx\,,&
g\geq 0\\
\infty\,,&\mbox{else.}
\end{cases}
\end{equation}
It can be shown that the integral is well-defined, possibly taking the value
$+\infty$, i.e.\ the negative part of  $-g^{\dagger}\ln (g^{\dagger}/g)$ is 
integrable if $g,g^{\dagger}\in L^1(\manifold)$ 
and $g,g^{\dagger}\geq 0$ (see e.g.\ \cite[Lemma 2.2]{t04}). 
We find that Assumption 
\ref{ass:SR} holds true with $\Cerr=1$ and
\begin{equation}\label{eq:err_Poisson}
\err(g):= \begin{cases}
\left|\int_{\manifold} \ln(g) 
\left(\mathrm dG_t-g^{\dagger} \,\mathrm dx\right)\right|\,,&g\geq 0\\
0\,,& \mbox{else.}
\end{cases}
\end{equation}
This motivates the following assumption:
\begin{assp}\label{ass:P}
With the notation of Assumption \ref{ass:F} assume that
\begin{enumerate}
	\item $\manifold$ is a compact submanifold of $\mathbb{R}^d$, 
	$\Yspace:=L^1(\manifold)\cap C(\manifold)$  	with norm 
	$\|g\|_{\mathcal{Y}}:=\|g\|_{L^1}+\|g\|_{\infty}$
	and 
\[
F(u)\geq 0 \qquad \mbox{for all }u\in\mathfrak{B}.
\]
\item For a subset $\tilde{\Yspace}\subset \Yspace$ specified
later there exist constants $\rho_0,t_0> 0$ and a strictly monotonically decreasing function 
$\zeta: (\rho_0,\infty)\to [0,1]$ fulfilling $\lim_{\rho\to \infty} \zeta(\rho) =0$
such that 
\begin{equation}\label{eq:conc}
\prob\left(\sup_{g\in \tilde{\Yspace}} \left|\int\limits_{\manifold} 
\ln(g) \left(\mathrm dG_t- g^{\dagger}\,\mathrm dx\right)\right|
\geq \frac{\rho}{\sqrt{t}}\right) \leq \zeta(\rho)
\end{equation}
for all $\rho>\rho_0$ and all $t>t_0$. 
\end{enumerate}
\end{assp}
 
It remains to discuss the concentration inequality \eqref{eq:conc}. 
A general result of this type, which can be seen as an analog to Talagrand's
inequalities for empirical processes, has been shown by 
Reynaud-Bouret \cite[Corollary 2]{rb03}.  She proved that for a 
Poisson process $G$ with intensity $\overline{g}\in L^1(\manifold)$ and a countable
family of functions $\{f_n\}_{n\in\Nset}$ with values in $[-b,b]$ the
random variable $Z:=\sup_{n\in\Nset} \left|\int f_n\,
(\mathrm dG-\overline{g}\,\mathrm dx\,)\right|$
satisfy the concentration inequality
\begin{equation}\label{eq:conc_Bouret}
\prob\paren{Z\geq (1+\epsilon)\EW(Z) + \sqrt{12v_0\rho}+\kappa(\epsilon)b\rho}
\leq \exp(-\rho)
\end{equation}
for all $\rho,\epsilon>0$ with $v_0:= \sup_{n\in\Nset} \int f_n^2 \overline{g}
\,\mathrm dx$
and $\kappa(\epsilon)= 5/4 + 32/\epsilon$. We can apply this result
with $G = tG_t$ and $\overline{g} = tg^{\dagger}$ if $\tilde{\Yspace}$
is separable and $\|\ln(g)\|_{\infty}\leq b$ for all 
$g\in \tilde{\Yspace}$. Under additional regularity assumptions
(e.g.\ $\manifold$ Lipschitz domain and $\sup\{\|\ln(g)\|_{H^s}:
g\in\tilde{\Yspace} \}<\infty$ with $s>\dim(\manifold)/2$) 
it can be shown that $\EW(Z)\leq C/\sqrt{t}$ 
(see \cite[sec.\ 4.1]{werner:12}). This yields a concentration inequality
of the form \eqref{eq:conc} with $\zeta(\rho):= \exp(-c\rho)$ for some
$c>0$.

An essential restriction of Reynaud-Bouret's concentration inequality
in our context is the assumption $\|\ln(g)\|_{\infty}\leq b$
for all $g\in\tilde{\Yspace}$. This does not allow for zeros
of $F(u)$ even on sets of measure $0$ if $F(u)$ is continuous, which
is a very restrictive assumption. 
Therefore, we introduce the following shifted version of the Kullback-Leibler divergence \eqref{eq:defi_KL} involving an offset parameter $\offset\geq 0$ and a side-constraint $g\geq -\frac{\offset}{2}$: 
\begin{align}\label{eq:kle_def}
\T{g}{g^{\dagger}} := \begin{cases} \KL{g+\offset}{g^{\dagger}+\offset} & \text{if } g \geq -\frac{\offset}{2}\\[0.1cm]\infty & \text{otherwise.}\end{cases}
\end{align}
Note that \eqref{eq:KL_cont} and \eqref{eq:kle_def} coincide for $\offset=0$. 
Correspondingly, we choose 
\begin{align}\label{eq:Se}
\S{g}{G_t}:= \begin{cases} 
\int_{\manifold} \left[g -\offset\ln(g+\offset)\right]\,\mathrm dx -
\int_{\manifold} \ln(g+\offset) \mathrm dG_t &
 \text{if } 
g \geq-\frac{\offset}{2} ,\\[0.1cm] 
\infty & \text{else}\end{cases}
\end{align}
as data misfit functional in \eqref{eq:NMgen}. 
Setting $\mathfrak{s}(g^{\dagger}):= \int_{\manifold} 
[g^{\dagger} - (g^{\dagger}+\offset)\ln(g^{\dagger}+\offset)]\,\mathrm dx$,
Assumption \ref{ass:SR} is satisfied with 
\begin{align}\label{eq:err_e}
\err\paren{g} 
:= \begin{cases} 
\int_{\manifold} \ln\left(g+\offset\right) 
\paren{\mathrm dG_t-g^{\dagger}\,\mathrm dx}, 
&g\geq - \frac{\offset}{2}, \\[0.1cm] 
0 & \mbox{else.}
\end{cases}
\end{align}

\begin{rem}[Assumptions \ref{ass:sc}A and \ref{ass:additive_sc}B (source conditions)] Using the inequality
\[
\|g_1-g_2\|_{L^2}^2 \leq \left(\frac{4}{3}\|g_1\|_{L^{\infty}} + \frac{2}{3}\|g_2\|_{L^{\infty}}\right)
\KL{g_1}{g_2}
\]
(see \cite[Lemma 2.2 (a)]{bl91}), Assumption~\ref{ass:sc}A/B with $\T{g_2}{g_1}= \|g_1-g_2\|_{L^2}^2$ imply 
 Assumption~\ref{ass:sc}A/B with $\T{g_2}{g_1}= \KL{g_2}{g_1}$ if $F(\mathfrak{B})$ is bounded in $L^{\infty}(\manifold)$. 
However, Assumptions  \ref{ass:sc}A/B with $\T{g_2}{g_1}= \KL{g_2}{g_1}$ may be fulfilled with a better
 index function $\varphi$ if $F(u^{\dagger})$ is close to $0$ in parts of the domain. 
 \end{rem}

Before we state our convergence result, we introduce the smallest concave
function larger than the rate function in Theorem \ref{thm:cradd}:
\begin{equation}\label{eq:defi_barphi}
\hat \varphi:= \inf\left\{\tilde{\varphi} ~\big|~\tilde{\varphi}
\text{ concave index function, } \tilde{\varphi}\left(s\right)\geq \Lambda \left(\varPsi^{-1} \left(s\right)\right)\text{ for } s \geq 0\right\}.
\end{equation}
From the case of H\"older-type source conditions we expect that
$\hat \varphi$ will typically coincide with $\Lambda\circ\varPsi^{-1}$
at least in a neighborhood of $0$ (see e.g. \cite[Prop. 4.3]{hy10}).
\begin{cor}
Let the Assumptions \ref{ass:F}, \ref{ass:ex} and \ref{ass:additive_sc}B hold true. 
Moreover, assume that one of the following conditions is satisfied:
\begin{itemize}
\item
Assumptions \ref{ass:nl}A and $\mathcal{P}$ hold true with $\mathcal{S}$
and $\mathcal{T}$ given by \eqref{eq:defi_calS} and 
\eqref{eq:KL_cont} and 
$\tilde{\Yspace} = F(\mathfrak{B})$. 
\item Assumptions \ref{ass:nl}B and $\mathcal{P}$ hold true with
 $\mathcal{T}$ and $\mathcal{S}$ given by 
\eqref{eq:kle_def} and \eqref{eq:Se} and 
\begin{align*}
\tilde{\Yspace}:=&\{F(u)+\offset: u\in\mathfrak{B}\}\\
& \cup\,\left\{F(u)+F'(u;v-u)+\offset: u,v\in \mathfrak{B}, F(u)+F'(u;v-u)\geq -\frac{\offset}{2}\right\}.
\end{align*}
\end{itemize}
Suppose that $\beta_2$ is sufficiently small, $\mathfrak B$ is bounded and $\mathcal R$ is chosen such that \eqref{eq:lower_bound_breg} holds true, and Lepski{\u\i}'s balancing principle \eqref{eqs:lepskij} is applied 
with $c =\Cbreg^{\frac1q} 4 \left(1+\gamma_{\rm nl}\right)$ and $\err= \frac{\tau\zeta^{-1} \left(1/\sqrt{t}\right)}{\sqrt{t}}$ with a 
sufficiently large parameter $\tau$ (a lower will be given in the proof). 
Then we obtain the following convergence rate in expectation:
\begin{align}\label{eq:KL_rate_aposteriori}
\EW \left\Vert u_{n_{\rm bal}}- u^\dagger\right\Vert^q \leq \mathcal O \left(\hat \varphi\left(\frac{\zeta^{-1}  (1/\sqrt{t})}{\sqrt{t}}\right)\right)\,,\qquad t\to \infty.
\end{align}
\end{cor}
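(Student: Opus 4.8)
The plan is to combine Theorem~\ref{thm:cradd}, part~3, with the concentration inequality from Assumption~$\mathcal{P}$. The key point is that Theorem~\ref{thm:cradd} gives a \emph{deterministic} bound of the form $\|u_{n_{\rm bal}}-u^\dagger\|^q = \mathcal{O}(\Lambda(\varPsi^{-1}(\err)))$ whenever a uniform bound $\err_n\leq\err$ on the error terms holds. So the first step is to show that, with high probability, all the relevant error terms $\err(g)$ appearing in $\err_n$ (see \eqref{eq:defi_errnA} or \eqref{eq:defi_errnB}) are bounded by a common quantity of order $t^{-1/2}$ times a deviation parameter $\rho$. Concretely, $\err_n$ is a fixed linear combination (with coefficients depending only on $\Cerr,\Ctc,\eta$) of terms of the form $\err(g)$ with $g$ ranging over $F(u_n)$, $F(u_{n+1})$, $g^\dagger$, or the linearizations $F(u_n)+F'(u_n;u_{n+1}-u_n)$ and $F(u_n)+F'(u_n;u^\dagger-u_n)$ (in case B, shifted by $\offset$). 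In either case all these arguments lie in the set $\tilde{\Yspace}$ specified in the hypothesis, so by \eqref{eq:conc} the event
\[
A_\rho := \left\{\sup_{g\in\tilde{\Yspace}}\left|\int_{\manifold}\ln(g+\offset)\,(\mathrm dG_t - g^\dagger\,\mathrm dx)\right| \leq \frac{\rho}{\sqrt t}\right\}
\]
(with $\offset=0$ in the first bullet) has probability at least $1-\zeta(\rho)$ for $\rho>\rho_0$, $t>t_0$, and on $A_\rho$ we get $\err_n\leq C\rho/\sqrt t$ uniformly in $n$ with an explicit constant $C=C(\Cerr,\Ctc,\eta)$.

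Next, I would fix the choice $\err := \tau\,\zeta^{-1}(1/\sqrt t)/\sqrt t$ (valid once $t$ is large enough that $\zeta^{-1}(1/\sqrt t)>\rho_0$, using that $\zeta$ is strictly decreasing to $0$ so $\zeta^{-1}$ is well defined) and take $\rho=\zeta^{-1}(1/\sqrt t)$ in the above, so that $\prob(A_\rho)\geq 1-1/\sqrt t$ and, on $A_\rho$, $\err_n\leq \err$ provided $\tau\geq C$. This is exactly the hypothesis needed to invoke Theorem~\ref{thm:cradd}.3: on $A_\rho$, since $\mathfrak B$ is bounded, $\mathcal R$ satisfies \eqref{eq:lower_bound_breg}, and Lepski{\u\i}'s principle is run with $c=\Cbreg^{1/q}4(1+\gamma_{\rm nl})$, we obtain
\[
\|u_{n_{\rm bal}}-u^\dagger\|^q \leq C'\,\Lambda(\varPsi^{-1}(\err)) \leq C''\,\hat\varphi(\err) \leq C'''\,\hat\varphi\!\left(\frac{\zeta^{-1}(1/\sqrt t)}{\sqrt t}\right),
\]
where the second inequality uses $\Lambda\circ\varPsi^{-1}\leq\hat\varphi$ by \eqref{eq:defi_barphi} and the third uses monotonicity of $\hat\varphi$ together with $\err = \tau\,\zeta^{-1}(1/\sqrt t)/\sqrt t$ and the fact that a concave index function satisfies $\hat\varphi(\tau s)\leq\tau\hat\varphi(s)$ for $\tau\geq1$ and $s$ small (cf.\ \eqref{eq:condvarphi2}), absorbing $\tau$ into the constant.

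Finally, to pass from this high-probability bound to the bound in expectation \eqref{eq:KL_rate_aposteriori}, I would split the expectation over $A_\rho$ and its complement. On $A_\rho$ the bound above applies. On the complement, which has probability at most $\zeta(\rho)=1/\sqrt t$, I bound $\|u_{n_{\rm bal}}-u^\dagger\|^q$ crudely: since $\mathfrak B$ is bounded, $\|u_{n_{\rm bal}}-u^\dagger\|\leq\operatorname{diam}(\mathfrak B)$, so the contribution of $A_\rho^c$ to the expectation is at most $\operatorname{diam}(\mathfrak B)^q/\sqrt t$. The remaining task is to check that this $1/\sqrt t$ term is dominated by $\hat\varphi(\zeta^{-1}(1/\sqrt t)/\sqrt t)$; since $\hat\varphi$ is an index function and $\zeta^{-1}(1/\sqrt t)\to\infty$, the argument $\zeta^{-1}(1/\sqrt t)/\sqrt t$ decays more slowly than $1/\sqrt t$ (at least for the typical $\zeta(\rho)=e^{-c\rho}$, where $\zeta^{-1}(1/\sqrt t)=\tfrac{1}{2c}\ln t$), so $1/\sqrt t = o(\hat\varphi(\zeta^{-1}(1/\sqrt t)/\sqrt t))$ and is absorbed.

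\emph{Main obstacle.} The genuinely delicate point is the very first step: verifying that \emph{every} error argument occurring in $\err_n$ — in particular the Newton update $F(u_n)+F'(u_n;u_{n+1}-u_n)$ and the linearization at the exact solution $F(u_n)+F'(u_n;u^\dagger-u_n)$, suitably shifted by $\offset$ in case B — actually lies in the set $\tilde{\Yspace}$ over which the concentration inequality \eqref{eq:conc} is assumed to hold, and that the whole argument is uniform in $n$ (so that a single event $A_\rho$ controls all iterates simultaneously). This is precisely why $\tilde{\Yspace}$ in the second bullet is defined to include the linearized iterates with the side constraint $F(u)+F'(u;v-u)\geq -\offset/2$; one must check the iterates satisfy this constraint, which holds because $u_{n+1}\in\mathfrak B_n$ forces $\S{\cdot}{G_t}<\infty$ and hence the argument of the logarithm is $\geq -\offset/2$ by definition of \eqref{eq:Se}, while for $u^\dagger$ the constraint is used only where \eqref{eq:err} is actually applied — i.e.\ it enters through the convention $\err(g)=0$ on the complementary set in \eqref{eq:err_e}, which is harmless. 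Once this bookkeeping is in place, the rest is a routine combination of the deterministic theorem with a standard high-probability-to-expectation argument.
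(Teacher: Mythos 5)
Your proposal is correct and follows essentially the same route as the paper: use Assumption $\mathcal{P}$ with $\rho=\zeta^{-1}(1/\sqrt{t})$ to get a uniform-in-$n$ bound $\err_n\leq\err$ on a high-probability event, apply Theorem~\ref{thm:cradd}.3 there, split the expectation using the boundedness of $\mathfrak B$, and absorb the $1/\sqrt{t}$ remainder via concavity of $\hat\varphi$. The only cosmetic difference is the treatment of the linearization at $u^\dagger$ in case B: the paper derives the side constraint $F(u_n)+F'(u_n;u^\dagger-u_n)\geq-\offset/2$ directly from Assumption~\ref{ass:nl}B (finiteness of the shifted Kullback--Leibler term), whereas you argue via the convention $\err(g)=0$ in \eqref{eq:err_e}; both suffice, since Theorem~\ref{thm:cradd} only requires the uniform bound on $\err_n$.
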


\begin{proof}
In the case of Assumption~\ref{ass:nl}A and $\offset=0$, we find that
Assumption \ref{ass:SR} holds true with $\err$ defined by 
\eqref{eq:err_Poisson}. Assumption $\mathcal{P}$ implies 
that the terms $\err_n$ defined by 
\eqref{eq:defi_errnA} in Theorem \ref{thm:cr} satisfy
\begin{align}\label{eq:pr_cor_kl}
\prob \left[\sup\limits_{n\in\Nset_0}\err_n \leq \frac{\tau\rho}{\sqrt{t}}\right]
\geq 1-\zeta(\rho)
\end{align}
for all $\rho>\rho_0$ and $t>t_0$ with 
$\tau:=1+2\eta C_{\rm tc}+ C_{\rm tc}$ due to $C_{\rm err} = 1$. 
To show the analogous
estimate in the case of Assumption~\ref{ass:nl}B, recall 
that Assumption \ref{ass:SR} holds true with $\err$ defined 
by \eqref{eq:err_e}.
From the variational characterization of $u_{n+1}$ it follows that
\begin{equation}\label{eq:errFlin1}
F\left(u_n\right) + F'\left(u_n;u_{n+1} - u_n\right) \geq -\frac{\offset}{2}
\end{equation}
Moreover, from Assumption~\ref{ass:nl}B we conclude that 
\begin{equation}\label{eq:errFlin2}
F\left(u_n\right) + F'\left(u_n;u^\dagger - u_n\right) \geq -\frac{\offset}{2}  
\end{equation}
This yields the inequality \eqref{eq:pr_cor_kl} with $\tau:=2$ 
also for $\err_n$ defined by \eqref{eq:defi_errnB} using 
Assumption $\mathcal{P}$. 

By virtue of \eqref{eq:pr_cor_kl} the sets 
$E_\rho := \left\{\sup_{n\in\Nset_0}\err_n \leq \frac{\tau\rho}{\sqrt{t}}\right\}$ have probability 
$\geq 1-\zeta\left(\rho\right)$ if $\rho>\rho_0$. Recall
that $\zeta$ is monotonically decreasing and define
$\rho \left(t\right) := \zeta^{-1} \left(1/\sqrt{t}\right)$ where we assume $t$ to be sufficiently large. 
We have
\begin{equation}\label{eq:proof_kl_prob}
\begin{aligned}
\EW \left\Vert u_{n_{\rm bal}} - u^\dagger\right\Vert^q \leq &2^q \left(\max\limits_{E_{\rho \left(t\right)}} \left\Vert u_{n_{\rm bal}} - u^\dagger\right\Vert^q \cdot \prob \left(E_{\rho\left(t\right)}\right) \right.\\[0.1cm]
&+ \left.\sup\limits_{u,v \in \mathfrak B} \left\Vert u-v\right\Vert ^q \prob\left(E_{\rho\left(t\right)}^C\right)\right).
\end{aligned}
\end{equation}
Now we can apply Theorem \ref{thm:cradd} to obtain the error bound
\[
\max\limits_{E_{\rho\left(t\right)}}\left\Vert u_{n_{\rm bal}} - u^\dagger\right\Vert^q 
\leq C_1 \hat\varphi\left(\err\right)
\leq C_1\tau \hat\varphi\left(\frac{\zeta^{-1}(1/\sqrt{t})}{\sqrt{t}}\right)
\]
with some constant $C_1 > 0$ for all sufficiently large $t$. In the last inequality we have used
the concavity of $\hat\varphi$. Plugging this into 
\eqref{eq:proof_kl_prob} yields
\[
\EW \left\Vert u_{n_{\rm bal}} - u^\dagger\right\Vert^q \leq 2^q 
\left(C_1\tau  \hat \varphi \left(\frac{\zeta^{-1}(1/\sqrt{t})}{\sqrt{t}}\right) + \frac{1}{\sqrt{t}}\sup\limits_{u,v \in \mathfrak B} \left\Vert u-v\right\Vert ^q \right).
\]
Since $\hat \varphi$ is concave, there exists $C_2 > 0$ such that $s \leq C_2 \hat \varphi\left(s\right)$ for all sufficiently small $s > 0$. 
Moreover, $\frac{1}{\sqrt{t}}$ in the second term is bounded by $\frac{1}{\rho_0}\frac{\zeta^{-1}(1/\sqrt{t})}{\sqrt{t}}$, and thus we obtain the assertion \eqref{eq:KL_rate_aposteriori}. 
\end{proof}

If $\zeta \left(\rho\right) = \exp\left(-c \rho\right)$ for some $c > 0$ 
as discussed above, then our convergence rates result 
\eqref{eq:KL_rate_aposteriori} means that we have to pay a logarithmic 
factor for adaptation to unknown smoothness by the Lepski{\u\i} principle.
It is known (see \cite{tsybakov:00}) that in some cases such a logarithmic
factor is inevitable. 

The most important issue is the verification of Assumption $\mathcal{P}$. 
In case of Assumption \ref{ass:nl}A this follows from the results
discussed above only under the restrictive assumption that
$F(u)$ is uniformly bounded away from $0$ for all $u\in\mathfrak{B}$. 
On the other hand for the case of Assumption \ref{ass:nl}B we 
find that Assumption $\mathcal{P}$ is satisfied under the mild condition 
\[
\sup_{u,v\in\mathfrak{B}}\|F(u)+F'(u,v-u)\|_{H^s}<\infty\,.
\]

{\bf Binning.}
Let us discuss the relation between the discrete data model discussed
in the introduction and the continuous model above. 
Consider a decomposition of the measurement manifold $\manifold$ into
$J$ measurable disjoint subdomains (bins) of positive 
measure $|\manifold_j|>0$:
\[
\manifold = \bigcup_{j=1}^J \manifold_j
\]
In practice each $\manifold_j$ may correspond to a detector counting the
number of photons in $\manifold_j$, so the measured data are 
\[
\underline{g}^{\rm obs}_j= tG_t(\manifold_j) = \#\{n\,\vert\, x_n\in\manifold_j\}\,,
\qquad j=1,\dots,J\,.
\]
Consider the linear operator $S_J:L^1(\manifold)\to \mathbb{R}^J$, 
$(S_Jg)_j:=\int_{\manifold_j}g\,\mathrm dx$ and the mapping 
$S_J^*\underline{g} := \sum_{j=1}^J |\manifold_j|^{-1}\underline{g}_j
{\bf 1}_{\manifold_j}$, which is adjoint
to $S_J$ with respect to the $L^2(\manifold)$ inner product and the 
inner product $\langle\underline{g},\underline{h}\rangle 
:= \sum_{j=1}^J|\manifold_j|^{-1}\underline{g}_j\underline{h}_j$. 
$P_J:=S_J^*S_J$ is the $L^2$-orthogonal projection onto the subspace
of functions, which are constant on each $\manifold_j$. 
$S_J$ can naturally be extended to measures such that 
$(S_J(G_t))_j = G_t(\manifold_j) =\frac{1}{t}\#\{n:x_n\in \manifold_j\}$. 
For distinction we denote the right hand sides of 
eqs.\ \eqref{eq:SforPoisson} and \eqref{eq:defi_KL} by $\underline{\mathcal{S}}_J$ and $\underline{\mathbb{KL}}_J$, 
and define $\mathcal{S}_{\infty}$ and $\mathbb{KL}_{\infty}$
by \eqref{eq:defi_calS} and \eqref{eq:KL_cont}. Then
\[
\underline{\mathcal{S}}_J\left(\underline{g}^{\rm obs};\underline{g}\right)
= \mathcal{S}_{\infty}\left(S_J^*\underline{g}^{\rm obs};
S_J^*\underline{g}\right)
\quad \mbox{and}\quad
\underline{\mathbb{KL}}_J\left(\underline{g}^{\dagger};\underline{g}\right)
= \mathbb{KL}_{\infty}\left(S_J^*\underline{g}^{\dagger};
S_J^*\underline{g}\right).
\]
The discrete data model above can be treated in the framework of our 
analysis by choosing 
\[
\S{g}{g^{\rm obs}} := 
\underline{\mathcal{S}}_{J}\left(\frac{1}{t}g^{\rm obs};
S_J g\right)\,,
\]
$\mathfrak{s}(g^{\dagger}):=\underline{\mathcal{S}}_{J}
\left(S_Jg^{\dagger};S_Jg^{\dagger}\right)$, and 
$\mathcal{T}:=\mathbb{KL}_{\infty}$. Then Assumption \ref{ass:SR} holds 
true with
\begin{equation}\label{eq:err_binning}
\begin{aligned}
\err(g):=& \left| \sum_{j=1}^J\ln((S_Jg)_j) 
\left(\frac{1}{t}\underline{g}^{\rm obs}_j-(S_Jg^{\dagger})_j\right)\right|\\
&+ \left|\mathbb{KL}_{\infty}\left(g^{\dagger};g\right)
- \mathbb{KL}_{\infty}\left(P_Jg^{\dagger};P_Jg\right)
\right|
\end{aligned}
\end{equation}
if $S_Jg\geq 0$, $\{j:(S_Jg)_j=0, (Sg^{\dagger})_j+\underline{g}^{\rm obs}_j>0\}=\emptyset$ and $\err(g):=\infty$ else. 
To achieve convergence, the binning has to be refined as $t\to\infty$.
The binning should be chosen such that the second term on the
right hand side of \eqref{eq:err_binning} (the discretization error) 
is dominated by the first term (the stochastic error) such that the
reconstruction error is determined by the number of observed photons 
rather than discretization effects. 

\section{applications and computed examples}\label{sec:applications}

{\bf Solution of the convex subproblems.}
We first describe a simple strategy 
to minimize the convex functional \eqref{eq:NMgen} with $\calS$  
as defined in \eqref{eq:Se} in each Newton step. For the moment we neglect the side condition $g\geq -\offset/2$ 
in \eqref{eq:Se}. For simplicity we further assume
that $\mathcal{R}$ is quadratic, e.g.\  $\mathcal{R}(u) = \|u-u_0\|^2$. 
We approximate $\S{g+h}{\gdelta}$ by the second order Taylor expansion
\[
\calS^{(2)}[\gdelta;g](h) := 
\S{g}{\gdelta} +
\int\limits_{\manifold}\left[\left(1-\frac{g^{\rm obs}+\offset}{g+\offset}\right) h 
+ \frac{1}{2}\frac{g^{\rm obs}+\offset}{\left(g+\offset\right)^2}h^2 \right]\,\mathrm d x
\]
and define an inner iteration 
\begin{equation}\label{eq:inner_it}
h_{n,l} := \argmin_h 
\left[ \calS^{(2)}\Big[\gdelta;F(u_n)+F'[u_n](u_{n,l}-u_n);\Big](h) 
+ \alpha_n \mathcal{R}(u_{n,l}+h)\right]
\end{equation}
for $l=0,1,\dots$ 
with $u_{n,0}:=u_n$ and $u_{n,l+1}:= u_{n,l} + s_{n,l} h_{n,l}$. 
Here the step-length parameter $s_{n,l}$ is chosen as the largest $s\in [0,1]$ for which 
$ s F'[u_n]\geq -\eta \offset- F(u_n)$ with a tuning parameter $\eta\in [0,1)$ (typically $\eta=0.9$). 
This choice of $s_{n,l}$ ensures that $F(u_n) + F'[u_n](u_{n,l+1}-u_n)\geq -\eta \offset$, i.e.\
\eqref{eq:inner_it} is a reasonable approximation to \eqref{eq:NMgen}, and $\eta=1/2$ ensures
that $u_{n,l+1}$ satisfies the side condition in \eqref{eq:Se}. It follows from the first order
optimality conditions, which are necessary and sufficient due to strict convexity here, that 
$u_{n,l}=u_{n,l+1}$ is the exact solution $u_{n+1}$ of \eqref{eq:NMgen} if $h_{n,l}=0$. 
Therefore, we stop the inner iteration if $\|h_{n,l}\|/\|h_{n,0}\|$
is sufficiently small. We also stop the inner iteration if $s_{n,l}$ is $0$ or too small. 

Simplifying and omitting terms independent of $h$ we can write \eqref{eq:inner_it} as a least squares
problem 
\begin{align}\label{eq:method_impl}
\begin{aligned}
h_{n,l} = \argmin\limits_{h}&\Bigg[\int\limits_{\manifold}  \frac{1}{2}\left( 
\frac{\sqrt{g^{\rm obs}+\offset}}{g_{n,l}+\offset} F'[u_n]h + \frac{g_{n,l} - g^{\rm obs}}{\sqrt{g^{\rm obs}+\offset}}\right)^2\,\mathrm d x\\[0.1cm]
&+ \alpha_n \mathcal R \left(u_{n,l}+h\right)\Bigg]
\end{aligned}
\end{align}
with $g_{n,l}:= F(u_n) + F'[u_n](u_{n,l}-u_n)$.
\eqref{eq:method_impl} is solved by the CG method applied to the normal equation.

In the examples below we observed fast convergence of the inner iteration \eqref{eq:inner_it}. In the phase retrieval problem we had problems with the
convergence of the CG iteration when $\alpha_n$ becomes too small. 
If the offset parameter $\offset$ becomes too small or if $\offset=0$ convergence 
deteriorates in general. This is not surprising since
the iteration \eqref{eq:inner_it} cannot be expected to converge to the exact solution 
$u_{n+1}$ of \eqref{eq:NMgen} if the side condition 
$F(u_n)+F'(u_n;u_{n+1}-u_n)\geq -\offset/2$  is active at $u_{n+1}$.
The design of efficient algorithms for this case 
will be addressed in future research.

\medskip

{\bf An inverse obstacle scattering problem without phase information.} 
The scattering of polarized, transverse magnetic (TM) time harmonic 
electromagnetic waves by 
a perfect cylindrical conductor with smooth cross section $D\subset\mathbb{R}^2$ 
is described by the equations
\begin{subequations}
\begin{align}
&\Delta u + k^2 u = 0,&& \mbox{in } \mathbb{R}^2\setminus D,\\
&\frac{\partial u}{\partial n}= 0,&& \mbox{on } \partial D,\\
\label{eq:SRC}
&\lim_{r\to \infty} \sqrt{r}\left(\frac{u_s}{r} -\textup{i}k u_s\right) =0,&&
\mbox{where } r:=|x|, u_s := u-u_i\,.
\end{align}
\end{subequations}
Here $D$ is compact, $\mathbb{R}^2\setminus D$ is connected, $n$ is the outer
normal vector on $\partial D$, and $u_i=\exp(\textup{i}k x\cdot d)$
is a plane incident wave with direction $d\in \{x\in \mathbb{R}^2:|x|=1\}$.
This is a classical obstacle scattering problems, and we refer
to the monograph \cite{CK:97} for further details and references. 
The Sommerfeld radiation condition \eqref{eq:SRC} implies the asymptotic
behavior
\[
u_s(x) = \frac{\exp(\textup{i}k|x|)}{\sqrt{|x|}}\left(u_{\infty}\left(\frac{x}{\left|x\right|}\right) + \mathcal O\left(\frac{1}{\left|x\right|}\right)\right)
\]
as $|x|\to\infty$, and $u_{\infty}$ is called
the \emph{far field pattern} or \emph{scattering amplitude} of $u_s$. 

\begin{figure}[!thb]
\setlength\fheight{4.9cm} \setlength\fwidth{4.9cm}
\centering
\subfigure[true obstacle and total field for an incident wave from ``South West'']{
\label{subfig:scattering_wave}
%
%
\begin{tikzpicture}

\begin{axis}[%
view={0}{90},
name=plot1,
scale only axis,
width=\fwidth,
height=\fheight,
xmin=-2.50716, xmax=2.50716,
ymin=-2.50716, ymax=2.50716,
axis on top]
\addplot graphics [xmin=-2.507163e+00, xmax=2.507163e+00, ymin=-2.507163e+00, ymax=2.507163e+00] {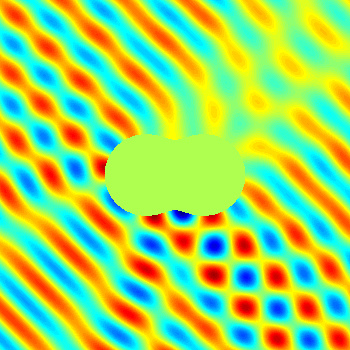};
\addplot [
color=black,
solid,
line width=2.0pt
]
coordinates{ (1,0) (0.999473,0.0245357) (0.997893,0.0490234) (0.995264,0.0734151) (0.991593,0.0976634) (0.986887,0.121721) (0.981158,0.145541) (0.974419,0.169078) (0.966686,0.192286) (0.957977,0.215121) (0.948312,0.23754) (0.937714,0.2595) (0.926208,0.280962) (0.913821,0.301886) (0.90058,0.322233) (0.886518,0.341968) (0.871667,0.361056) (0.856061,0.379466) (0.839736,0.397166) (0.82273,0.414128) (0.805083,0.430326) (0.786834,0.445736) (0.768026,0.460337) (0.748701,0.47411) (0.728904,0.487038) (0.708678,0.499108) (0.688071,0.510308) (0.667128,0.520632) (0.645896,0.530073) (0.624421,0.53863) (0.602752,0.546303) (0.580935,0.553096) (0.559017,0.559017) (0.537045,0.564076) (0.515066,0.568288) (0.493124,0.571668) (0.471265,0.574239) (0.449533,0.576023) (0.427969,0.577049) (0.406614,0.577348) (0.385509,0.576955) (0.364689,0.575907) (0.34419,0.574247) (0.324045,0.572019) (0.304283,0.569274) (0.284932,0.566062) (0.266014,0.56244) (0.247551,0.558466) (0.229557,0.554201) (0.212046,0.549709) (0.195024,0.545057) (0.178495,0.540312) (0.162455,0.535544) (0.146898,0.530823) (0.131811,0.526218) (0.117174,0.521799) (0.102964,0.517633) (0.0891499,0.513783) (0.0756975,0.510311) (0.0625659,0.507271) (0.0497098,0.504712) (0.0370797,0.502677) (0.0246223,0.501198) (0.0122817,0.500301) (3.06162e-17,0.5) (-0.0122817,0.500301) (-0.0246223,0.501198) (-0.0370797,0.502677) (-0.0497098,0.504712) (-0.0625659,0.507271) (-0.0756975,0.510311) (-0.0891499,0.513783) (-0.102964,0.517633) (-0.117174,0.521799) (-0.131811,0.526218) (-0.146898,0.530823) (-0.162455,0.535544) (-0.178495,0.540312) (-0.195024,0.545057) (-0.212046,0.549709) (-0.229557,0.554201) (-0.247551,0.558466) (-0.266014,0.56244) (-0.284932,0.566062) (-0.304283,0.569274) (-0.324045,0.572019) (-0.34419,0.574247) (-0.364689,0.575907) (-0.385509,0.576955) (-0.406614,0.577348) (-0.427969,0.577049) (-0.449533,0.576023) (-0.471265,0.574239) (-0.493124,0.571668) (-0.515066,0.568288) (-0.537045,0.564076) (-0.559017,0.559017) (-0.580935,0.553096) (-0.602752,0.546303) (-0.624421,0.53863) (-0.645896,0.530073) (-0.667128,0.520632) (-0.688071,0.510308) (-0.708678,0.499108) (-0.728904,0.487038) (-0.748701,0.47411) (-0.768026,0.460337) (-0.786834,0.445736) (-0.805083,0.430326) (-0.82273,0.414128) (-0.839736,0.397166) (-0.856061,0.379466) (-0.871667,0.361056) (-0.886518,0.341968) (-0.90058,0.322233) (-0.913821,0.301886) (-0.926208,0.280962) (-0.937714,0.2595) (-0.948312,0.23754) (-0.957977,0.215121) (-0.966686,0.192286) (-0.974419,0.169078) (-0.981158,0.145541) (-0.986887,0.121721) (-0.991593,0.0976634) (-0.995264,0.0734151) (-0.997893,0.0490234) (-0.999473,0.0245357) (-1,1.22465e-16) (-0.999473,-0.0245357) (-0.997893,-0.0490234) (-0.995264,-0.0734151) (-0.991593,-0.0976634) (-0.986887,-0.121721) (-0.981158,-0.145541) (-0.974419,-0.169078) (-0.966686,-0.192286) (-0.957977,-0.215121) (-0.948312,-0.23754) (-0.937714,-0.2595) (-0.926208,-0.280962) (-0.913821,-0.301886) (-0.90058,-0.322233) (-0.886518,-0.341968) (-0.871667,-0.361056) (-0.856061,-0.379466) (-0.839736,-0.397166) (-0.82273,-0.414128) (-0.805083,-0.430326) (-0.786834,-0.445736) (-0.768026,-0.460337) (-0.748701,-0.47411) (-0.728904,-0.487038) (-0.708678,-0.499108) (-0.688071,-0.510308) (-0.667128,-0.520632) (-0.645896,-0.530073) (-0.624421,-0.53863) (-0.602752,-0.546303) (-0.580935,-0.553096) (-0.559017,-0.559017) (-0.537045,-0.564076) (-0.515066,-0.568288) (-0.493124,-0.571668) (-0.471265,-0.574239) (-0.449533,-0.576023) (-0.427969,-0.577049) (-0.406614,-0.577348) (-0.385509,-0.576955) (-0.364689,-0.575907) (-0.34419,-0.574247) (-0.324045,-0.572019) (-0.304283,-0.569274) (-0.284932,-0.566062) (-0.266014,-0.56244) (-0.247551,-0.558466) (-0.229557,-0.554201) (-0.212046,-0.549709) (-0.195024,-0.545057) (-0.178495,-0.540312) (-0.162455,-0.535544) (-0.146898,-0.530823) (-0.131811,-0.526218) (-0.117174,-0.521799) (-0.102964,-0.517633) (-0.0891499,-0.513783) (-0.0756975,-0.510311) (-0.0625659,-0.507271) (-0.0497098,-0.504712) (-0.0370797,-0.502677) (-0.0246223,-0.501198) (-0.0122817,-0.500301) (-9.18485e-17,-0.5) (0.0122817,-0.500301) (0.0246223,-0.501198) (0.0370797,-0.502677) (0.0497098,-0.504712) (0.0625659,-0.507271) (0.0756975,-0.510311) (0.0891499,-0.513783) (0.102964,-0.517633) (0.117174,-0.521799) (0.131811,-0.526218) (0.146898,-0.530823) (0.162455,-0.535544) (0.178495,-0.540312) (0.195024,-0.545057) (0.212046,-0.549709) (0.229557,-0.554201) (0.247551,-0.558466) (0.266014,-0.56244) (0.284932,-0.566062) (0.304283,-0.569274) (0.324045,-0.572019) (0.34419,-0.574247) (0.364689,-0.575907) (0.385509,-0.576955) (0.406614,-0.577348) (0.427969,-0.577049) (0.449533,-0.576023) (0.471265,-0.574239) (0.493124,-0.571668) (0.515066,-0.568288) (0.537045,-0.564076) (0.559017,-0.559017) (0.580935,-0.553096) (0.602752,-0.546303) (0.624421,-0.53863) (0.645896,-0.530073) (0.667128,-0.520632) (0.688071,-0.510308) (0.708678,-0.499108) (0.728904,-0.487038) (0.748701,-0.47411) (0.768026,-0.460337) (0.786834,-0.445736) (0.805083,-0.430326) (0.82273,-0.414128) (0.839736,-0.397166) (0.856061,-0.379466) (0.871667,-0.361056) (0.886518,-0.341968) (0.90058,-0.322233) (0.913821,-0.301886) (0.926208,-0.280962) (0.937714,-0.2595) (0.948312,-0.23754) (0.957977,-0.215121) (0.966686,-0.192286) (0.974419,-0.169078) (0.981158,-0.145541) (0.986887,-0.121721) (0.991593,-0.0976634) (0.995264,-0.0734151) (0.997893,-0.0490234) (0.999473,-0.0245357) (1,0)
};

\end{axis}

\end{tikzpicture}
}
\setlength\fheight{2.18cm}
\subfigure[$t|u_{\infty}|^2=tF(q^{\dagger})$ for both waves (red line) and corresponding
count data $\gdelta$ (blue crosses)]{
\label{subfig:scattering_data}
\input{scattering_data.tikz}
}\\
\setlength\fheight{4.9cm}
\subfigure[results for $\calS$ as in \eqref{eq:Se}. blue: best, green: median,
black: initial guess]{
\label{subfig:scattering_kl}
\input{scattering_kl.tikz}
}
\subfigure[results for $\S{g_1}{g_2}=\|g_1-g_2\|_{L^2}^2$. blue: best, green: median,
black: initial guess]{
\label{subfig:scattering_kl_high}
\input{scattering_l2.tikz}
}
\caption{Numerical results for the inverse obstacle scattering problem \eqref{eq:scat}. Panels c) and
d) show best and median reconstruction from 100 experiments with $t=1000$ 
expected counts. See also Table \ref{tab:scattering}.}
\label{fig:scattering}
\end{figure}

We consider the inverse problem  to recover the shape of the obstacle $D$ 
from photon counts of the scattered electromagnetic field far away from the obstacle.
Since the photon density is proportional to the squared absolute value of the electric
field, we have no immediate access to the phase of the electromagnetic field. 
Since at large distances the photon density is approximately proportional to $|u_{\infty}|^2$, 
our inverse problem is described by the operator equation
\begin{equation}\label{eq:scat}
F(\partial D) = |u_{\infty}|^2\,.
\end{equation}
A similar problem is studied with different methods and noise models 
by Ivanyshyn \& Kress \cite{IK:10}. 
Recall that $|u_{\infty}|$ is invariant under translations of $\partial D$. 
Therefore, it is only possible to recover the shape, but not the location of 
$D$. For plottings we always shift the center of gravity of $\partial D$ to the origin.
We assume that $D$ is star-shaped and represent $\partial D$ by a periodic
function $q$ such that $\partial D = \{q(t)(\cos t,\sin t)^{\top}:t\in [0,2\pi]\}$.
For details on the implementation of $F$, its derivative and adjoint we refer
to \cite{hohage:98} where the mapping $q\mapsto u_{\infty}$ is considered
as forward operator. Even in this situation where the phase of $u_\infty$
is given in addition to its modulus, it has been shown in \cite{hohage:98}
that for Sobolev-type smoothness assumptions at most logarithmic rates
of convergence can be expected. 

\begin{table}[!htb]
\begin{center}
\begin{tabular}{cc|ccc|}
$t$ &$\S{g}{g^{\rm obs}}$ & $N$ &  
$\sqrt{\EW \|q_N\!-\!q^{\dagger}\|_{L^2}^2}$ & $\sqrt{\Var \|q_N\!-\!q^{\dagger}\|_{L^2}}$\\ [1ex]
\hline\hline
&$\|g-g^{\rm obs}\|_{L^2}^2$ & 7 & 0.124 & 0.033 \\
&$\phi^2 \left(g;\max\left\{\gdelta,0.2\right\}\right)$ & 2 & 0.122 & 0.018 \\
\raisebox{1.5ex}[-1.5ex]{$100$}&$\mathcal{S}$ in eq.~\eqref{eq:Se}& 3  & 0.091 & 0.025\\ [0.5ex]
\hline
&$\|g-g^{\rm obs}\|_{L^2}^2$ & 9 & 0.106 & 0.014 \\
&$\phi^2 \left(g;\max\left\{\gdelta,0.2\right\}\right)$ & 7 & 0.091 & 0.012 \\
\raisebox{1.5ex}[-1.5ex]{$1000$}&$\mathcal{S}$ in eq.~\eqref{eq:Se}& 5  & 0.070 & 0.017\\ [0.5ex]
\hline
&$\|g-g^{\rm obs}\|_{L^2}^2$ & 9 & 0.105 & 0.004 \\
&$\phi^2 \left(g;\max\left\{\gdelta,0.2\right\}\right)$ & 23 & 0.076 & 0.048 \\
\raisebox{1.5ex}[-1.5ex]{$10000$}&$\mathcal{S}$ in eq.~\eqref{eq:Se}& 5  & 0.050 & 0.005 \\ [0.5ex]
\hline
\end{tabular}
\end{center}

\caption{$L^2$-error statistics for the inverse obstacle scattering problem \eqref{eq:scat}. The log-likelihood functional \eqref{eq:Se} is compared to the 
standard $L^2$ and Pearson's $\phi^2$ distance (cf. \eqref{eq:pearson}) for different values of the expected total number
of counts $t$ with 100 experiments for each set of parameters. 
The error of the initial guess is $\|q_0\!-\!q^{\dagger}\|_{L^2}= 0.288$. 
All parameters as in Figure \ref{fig:scattering}.
}
\label{tab:scattering}
\end{table}
As a test example we choose the obstacle shown in Figure \ref{fig:scattering}
described by $q^{\dagger}(t) = \frac{1}{2}\sqrt{3\cos^2t+1}$ with two incident 
waves from ``South West'' and from ``East'' with wave number 
$k=10$ as shown in Figure \ref{fig:scattering}. We used $J=200$ equidistant
bins. 
The initial guess for the Newton iteration is the unit circle
described by $q_0\equiv 1$, and we choose the Sobolev norm 
$\mathcal R\left(q\right) = \left\Vert q-q_0\right\Vert_{H^s}^2$ with
$s=1.6$ as penalty functional. The regularization parameters are chosen
as $\alpha_n = 0.5\cdot(2/3)^n$. Moreover, we choose an initial offset parameter
$\offset=0.002$, which is reduced by $\frac45$ in each iteration step. The inner 
iteration \eqref{eq:inner_it} is stopped when $\|h_{n,l}\|/\|h_{n,0}\|\leq 0.1$, 
which was usually the case after about 3 iterations (or about 5 iterations for
$\|h_{n,l}\|/\|h_{n,0}\|\leq 0.01$). 

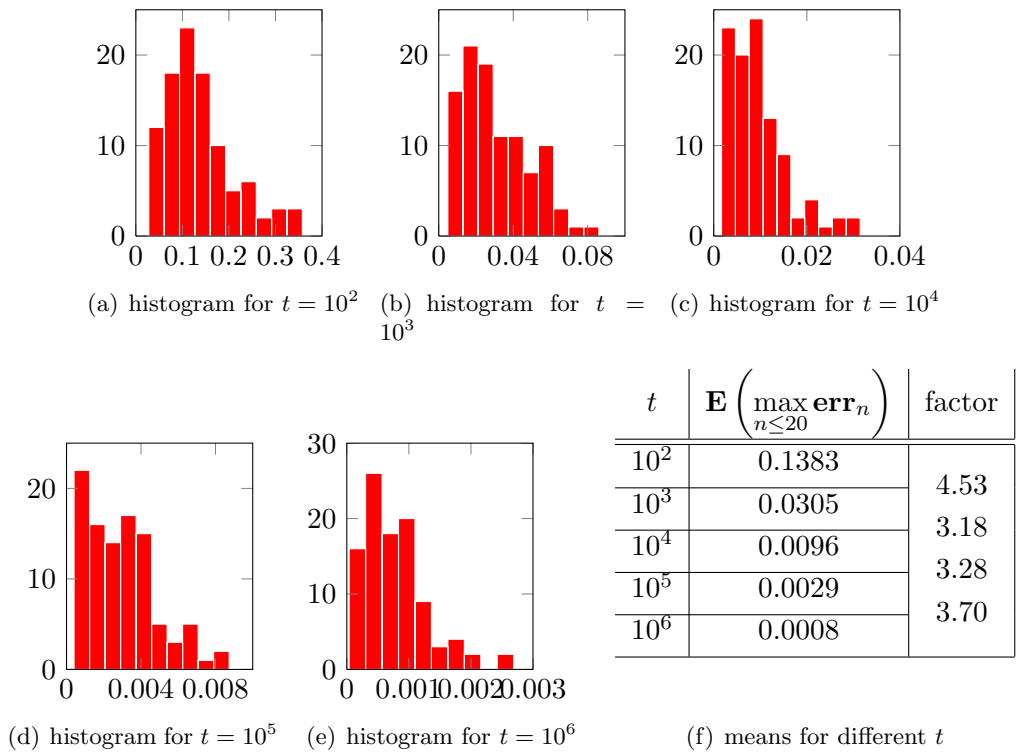
\begin{figure}[!thb]
\setlength\fheight{3cm} \setlength\fwidth{2.45cm}
\centering
\subfigure[histogram for $t= 10^2$]{
%
%
\begin{tikzpicture}

\begin{axis}[%
xtick = {0,0.1,0.2,0.3,0.4},
scale only axis,
width=\fwidth,
height=\fheight,
xmin=0, xmax=0.4,
ymin=0, ymax=25,
axis on top]
\addplot [fill=red,draw=white] coordinates{ (0.027883,0) (0.027883,12) (0.0608618,12) (0.0608618,0) (0.0608618,0) (0.0608618,18) (0.0938406,18) (0.0938406,0) (0.0938406,0) (0.0938406,23) (0.126819,23) (0.126819,0) (0.126819,0) (0.126819,18) (0.159798,18) (0.159798,0) (0.159798,0) (0.159798,10) (0.192777,10) (0.192777,0) (0.192777,0) (0.192777,5) (0.225756,5) (0.225756,0) (0.225756,0) (0.225756,6) (0.258735,6) (0.258735,0) (0.258735,0) (0.258735,2) (0.291714,2) (0.291714,0) (0.291714,0) (0.291714,3) (0.324692,3) (0.324692,0) (0.324692,0) (0.324692,3) (0.357671,3) (0.357671,0) (0.027883,0)};
\end{axis}
\end{tikzpicture}
}
\subfigure[histogram for $t= 10^3$]{
%
%
\begin{tikzpicture}

\begin{axis}[%
xtick = {0,0.04,0.08},
xticklabels = {0,0.04,0.08},
scale only axis,
width=\fwidth,
height=\fheight,
xmin=0, xmax=0.1,
ymin=0, ymax=25,
axis on top]
\addplot [fill=red,draw=white] coordinates{ (0.00490657,0) (0.00490657,16) (0.0130143,16) (0.0130143,0) (0.0130143,0) (0.0130143,21) (0.021122,21) (0.021122,0) (0.021122,0) (0.021122,19) (0.0292296,19) (0.0292296,0) (0.0292296,0) (0.0292296,11) (0.0373373,11) (0.0373373,0) (0.0373373,0) (0.0373373,11) (0.045445,11) (0.045445,0) (0.045445,0) (0.045445,7) (0.0535527,7) (0.0535527,0) (0.0535527,0) (0.0535527,10) (0.0616604,10) (0.0616604,0) (0.0616604,0) (0.0616604,3) (0.0697681,3) (0.0697681,0) (0.0697681,0) (0.0697681,1) (0.0778758,1) (0.0778758,0) (0.0778758,0) (0.0778758,1) (0.0859835,1) (0.0859835,0) (0.00490657,0)};
\end{axis}
\end{tikzpicture}
}
\subfigure[histogram for $t= 10^4$]{
%
%
\begin{tikzpicture}

\begin{axis}[%
xtick = {0,0.02,0.04},
xticklabels = {0,0.02,0.04},
scale only axis,
width=\fwidth,
height=\fheight,
xmin=0, xmax=0.04,
ymin=0, ymax=25,
axis on top]
\addplot [fill=red,draw=white] coordinates{ (0.00168076,0) (0.00168076,23) (0.00465686,23) (0.00465686,0) (0.00465686,0) (0.00465686,20) (0.00763295,20) (0.00763295,0) (0.00763295,0) (0.00763295,24) (0.010609,24) (0.010609,0) (0.010609,0) (0.010609,13) (0.0135851,13) (0.0135851,0) (0.0135851,0) (0.0135851,9) (0.0165612,9) (0.0165612,0) (0.0165612,0) (0.0165612,2) (0.0195373,2) (0.0195373,0) (0.0195373,0) (0.0195373,4) (0.0225134,4) (0.0225134,0) (0.0225134,0) (0.0225134,1) (0.0254895,1) (0.0254895,0) (0.0254895,0) (0.0254895,2) (0.0284656,2) (0.0284656,0) (0.0284656,0) (0.0284656,2) (0.0314417,2) (0.0314417,0) (0.00168076,0)};
\end{axis}
\end{tikzpicture}
}
\\
\subfigure[histogram for $t= 10^5$]{
%
%
\begin{tikzpicture}

\begin{axis}[%
xtick = {0,0.004,0.008},
xticklabels = {0,0.004,0.008},
scale only axis,
width=\fwidth,
height=\fheight,
xmin=0, xmax=0.01,
ymin=0, ymax=25,
axis on top]
\addplot [fill=red,draw=white] coordinates{ (0.000405405,0) (0.000405405,22) (0.00123821,22) (0.00123821,0) (0.00123821,0) (0.00123821,16) (0.00207101,16) (0.00207101,0) (0.00207101,0) (0.00207101,14) (0.00290382,14) (0.00290382,0) (0.00290382,0) (0.00290382,17) (0.00373662,17) (0.00373662,0) (0.00373662,0) (0.00373662,15) (0.00456942,15) (0.00456942,0) (0.00456942,0) (0.00456942,5) (0.00540223,5) (0.00540223,0) (0.00540223,0) (0.00540223,3) (0.00623503,3) (0.00623503,0) (0.00623503,0) (0.00623503,5) (0.00706783,5) (0.00706783,0) (0.00706783,0) (0.00706783,1) (0.00790064,1) (0.00790064,0) (0.00790064,0) (0.00790064,2) (0.00873344,2) (0.00873344,0) (0.000405405,0)};
\end{axis}
\end{tikzpicture}
}
\subfigure[histogram for $t= 10^6$]{
%
%
\begin{tikzpicture}

\begin{axis}[%
xtick = {0,0.001,0.002,0.003},
xticklabels = {0,0.001,0.002,0.003},
scale only axis,
width=\fwidth,
height=\fheight,
xmin=0, xmax=0.003,
ymin=0, ymax=30,
axis on top
]
\addplot [fill=red,draw=white] coordinates{ (3.76262e-05,0) (3.76262e-05,16) (0.000302741,16) (0.000302741,0) (0.000302741,0) (0.000302741,26) (0.000567856,26) (0.000567856,0) (0.000567856,0) (0.000567856,18) (0.000832972,18) (0.000832972,0) (0.000832972,0) (0.000832972,20) (0.00109809,20) (0.00109809,0) (0.00109809,0) (0.00109809,9) (0.0013632,9) (0.0013632,0) (0.0013632,0) (0.0013632,3) (0.00162832,3) (0.00162832,0) (0.00162832,0) (0.00162832,4) (0.00189343,4) (0.00189343,0) (0.00189343,0) (0.00189343,2) (0.00215855,2) (0.00215855,0) (0.00215855,0) (0.00215855,0) (0.00242366,0) (0.00242366,0) (0.00242366,0) (0.00242366,2) (0.00268878,2) (0.00268878,0) (3.76262e-05,0)};
\end{axis}
\end{tikzpicture}
}
\subfigure[means for different $t$]{
\raisebox{15ex}{\begin{tabular}{c|c|c|}
$t$ & $\EW\left(\max\limits_{n \leq 20} \err_n\right)$ & factor\\[1ex]
\hline\hline
\rule{0cm}{.3cm}$10^2$ & 0.1383 & \\[.5ex]
\cline{1-2}
\rule{0cm}{.3cm}$10^3$ & 0.0305 &\raisebox{1.5ex}[1.5ex]{$4.53$}\\[.5ex]
\cline{1-2}
\rule{0cm}{.3cm}$10^4$ & 0.0096 &\raisebox{1.5ex}[1.5ex]{$3.18$}\\[.5ex]
\cline{1-2}
\rule{0cm}{.3cm}$10^5$ & 0.0029 &\raisebox{1.5ex}[1.5ex]{$3.28$}\\[.5ex]
\cline{1-2}
\rule{0cm}{.3cm}$10^6$ & 0.0008 &\raisebox{1.5ex}[1.5ex]{$3.70$}\\[.5ex]
\hline
\end{tabular}}
}
\caption{Overview for the error terms \eqref{eq:defi_errnB} for the inverse scattering problem. For different values of the expected total number of counts the value $\max_{n \leq 20} \err_n$ has been calculated in 100 experiments. The
figure shows the corresponding histograms and means. The decay of order $\frac{1}{\sqrt{t}}$, i.e.\ reduction by a factor of $\sqrt{10} \approx 3.16$ in the table is clearly visible. 
All parameters are as in Figure \ref{fig:scattering}.}
\end{figure}

For comparison we take the usual IRGNM, i.e.\ \eqref{eqs:method} with $\S{g}{\hat g} =\left\Vert g-\hat g\right\Vert_{L^2}^2$ and $\mathcal R$ as above as well as a weighted IRGNM where $\calS$ is chosen to be Pearson's $\phi^2$-distance:
\begin{equation}\label{eq:pearson}
\phi^2 \left(\gdelta;g\right) = \int\limits_{\manifold} \frac{\left|g- \gdelta\right|^2}{\gdelta}\,\mathrm d x.
\end{equation}
Since in all our examples we have many zero counts, we actually used
\[
\S{g}{\gdelta} = \phi^2\left(\gdelta;\max\{g,c\}\right)
\]
with a cutoff-parameter $c>0$. 

Error statistics of shape reconstructions from 100 experiments are shown in Table \ref{tab:scattering}. The stopping index
$N$ is chosen a priori such that (the empirical version of) the expectation 
$\EW \|q_n-q^{\dagger}\|_{L^2}^2$ is minimal for $n=N$, i.e.\ we compare both 
methods with an oracle stopping rule. Note that the mean square error is significantly smaller for the Kullback-Leibler divergence than
for the $L^2$-distance and also clearly smaller than for Pearson's distance.
Moreover the distribution of the error is more concentrated for the
Kullback-Leibler divergence. 
For Pearson's $\phi^2$ distance it must be said that the results depend strongly on the cutoff parameter for the data. In our experiments $c=0.2$ 
seemed to be a good choice in general.

\medskip
\paragraph{\bf A phase retrieval problem.} 

\begin{figure}[!htb]
\setlength\fheight{4cm} \setlength\fwidth{4cm} 
\centering
\subfigure[exact solution]{
\label{subfig:phase_retrieval_exact}
%
%
\begin{tikzpicture}

\begin{axis}[%
view={0}{90},
name=plot1,
scale only axis,
width=\fwidth,
height=\fheight,
y dir=reverse,
xmin=-0.5, xmax=255.5,
ymin=-0.5, ymax=255.5,
axis on top]
\addplot graphics [xmin=-5.000000e-01, xmax=2.555000e+02, ymin=-5.000000e-01, ymax=2.555000e+02] {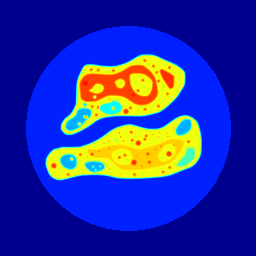};
\end{axis}

\begin{axis}[%
axis on top,
at=(plot1.right of south east), anchor=left of south west,
width=0.0675676\fwidth, height=1\fheight,
scale only axis,
xmin=0, xmax=1,
ymin=0, ymax=2.2,
xtick=\empty, yticklabel pos=right]
\addplot graphics [xmin=0, xmax=1, ymin=0, ymax=2.200000e+00] {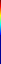};
\end{axis}
\end{tikzpicture} 
}
\subfigure[$\log_{10}$ of observed count data $\gdelta$]{
\label{subfig:phase_retrieval_data}
%
%
\begin{tikzpicture}

\begin{axis}[%
view={0}{90},
name=plot1,
scale only axis,
width=\fwidth,
height=\fheight,
y dir=reverse,
xmin=0.5, xmax=256.5,
ymin=0.5, ymax=256.5,
axis on top]
\addplot graphics [xmin=5.000000e-01, xmax=2.565000e+02, ymin=5.000000e-01, ymax=2.565000e+02] {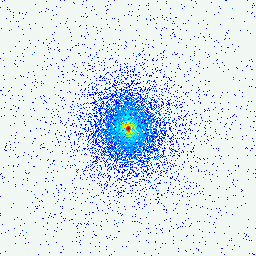};
\end{axis}

\begin{axis}[%
axis on top,
at=(plot1.right of south east), anchor=left of south west,
width=0.0675676\fwidth, height=1\fheight,
scale only axis,
xmin=0, xmax=1,
ymin=-3, ymax=5.796,
xtick=\empty, yticklabel pos=right]
\addplot graphics [xmin=0, xmax=1, ymin=-3, ymax=5.796000e+00] {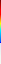};
\end{axis}
\end{tikzpicture}
}\\
\subfigure[median reconstruction for the IRGNM]{
\label{subfig:phase_retrieval_l2}
%
%
\begin{tikzpicture}

\begin{axis}[%
view={0}{90},
name=plot1,
scale only axis,
width=\fwidth,
height=\fheight,
y dir=reverse,
xmin=-0.5, xmax=255.5,
ymin=-0.5, ymax=255.5,
axis on top]
\addplot graphics [xmin=-5.000000e-01, xmax=2.555000e+02, ymin=-5.000000e-01, ymax=2.555000e+02] {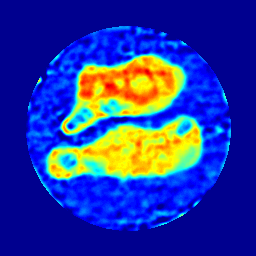};
\end{axis}

\begin{axis}[%
axis on top,
at=(plot1.right of south east), anchor=left of south west,
width=0.0675676\fwidth, height=1\fheight,
scale only axis,
xmin=0, xmax=1,
ymin=0, ymax=2.2,
xtick=\empty, yticklabel pos=right]
\addplot graphics [xmin=0, xmax=1, ymin=0, ymax=2.200000e+00] {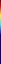};
\end{axis}
\end{tikzpicture} 
}
\subfigure[$\log_{10}$ of exact data $tF(\varphi^{\dagger})$]{
\label{subfig:phase_retrieval_data_exact}
%
%
\begin{tikzpicture}

\begin{axis}[%
view={0}{90},
name=plot1,
scale only axis,
width=\fwidth,
height=\fheight,
y dir=reverse,
xmin=0.5, xmax=256.5,
ymin=0.5, ymax=256.5,
axis on top]
\addplot graphics [xmin=5.000000e-01, xmax=2.565000e+02, ymin=5.000000e-01, ymax=2.565000e+02] {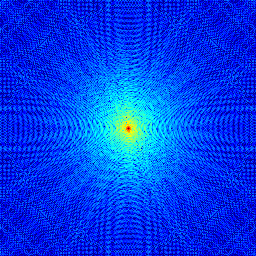};
\end{axis}

\begin{axis}[%
axis on top,
at=(plot1.right of south east), anchor=left of south west,
width=0.0675676\fwidth, height=1\fheight,
scale only axis,
xmin=0, xmax=1,
ymin=-3, ymax=5.79626,
xtick=\empty, yticklabel pos=right]
\addplot graphics [xmin=0, xmax=1, ymin=-3, ymax=5.796263e+00] {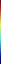};
\end{axis}
\end{tikzpicture} 
}\\
\subfigure[median reconstruction for our method \eqref{eq:method_impl}]{
\label{subfig:phase_retrieval_kl}
%
%
\begin{tikzpicture}

\begin{axis}[%
view={0}{90},
name=plot1,
scale only axis,
width=\fwidth,
height=\fheight,
y dir=reverse,
xmin=-0.5, xmax=255.5,
ymin=-0.5, ymax=255.5,
axis on top]
\addplot graphics [xmin=-5.000000e-01, xmax=2.555000e+02, ymin=-5.000000e-01, ymax=2.555000e+02] {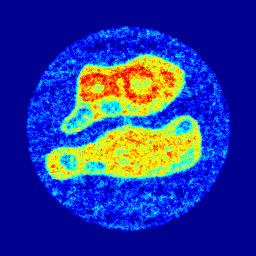};
\end{axis}

\begin{axis}[%
axis on top,
at=(plot1.right of south east), anchor=left of south west,
width=0.0675676\fwidth, height=1\fheight,
scale only axis,
xmin=0, xmax=1,
ymin=0, ymax=2.2,
xtick=\empty, yticklabel pos=right]
\addplot graphics [xmin=0, xmax=1, ymin=0, ymax=2.200000e+00] {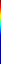};
\end{axis}
\end{tikzpicture} 
}
\subfigure[$\log_{10}$ of median data reconstruction $tF(\varphi_N)$
for our method \eqref{eq:method_impl}]{
\label{subfig:phase_retrieval_data_rec}
%
%
\begin{tikzpicture}

\begin{axis}[%
view={0}{90},
name=plot1,
scale only axis,
width=\fwidth,
height=\fheight,
y dir=reverse,
xmin=0.5, xmax=256.5,
ymin=0.5, ymax=256.5,
axis on top]
\addplot graphics [xmin=5.000000e-01, xmax=2.565000e+02, ymin=5.000000e-01, ymax=2.565000e+02] {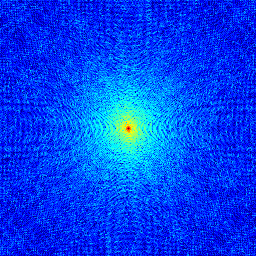};
\end{axis}

\begin{axis}[%
axis on top,
at=(plot1.right of south east), anchor=left of south west,
width=0.0675676\fwidth, height=1\fheight,
scale only axis,
xmin=0, xmax=1,
ymin=-3, ymax=5.79667,
xtick=\empty, yticklabel pos=right]
\addplot graphics [xmin=0, xmax=1, ymin=-3, ymax=5.796673e+00] {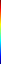};
\end{axis}
\end{tikzpicture} 
}
\caption{Median reconstructions for the phase retrieval problem with 
$t=10^6$ expected counts.}
\label{fig:phaseretrieval}
\end{figure}

A well-known class of inverse problems with numerous applications in optics
consists in reconstructing a function $f:\mathbb{R}^d\to\mathbb{C}$ from the modulus of 
its Fourier transform $|\mathcal{F} f|$
and additional a priori information, or equivalently to reconstruct the phase
$\mathcal{F}f/|\mathcal{F}f|$ of $\mathcal{F}f$ (see Hurt \cite{h89}). 

In the following we assume more specifically that $f:\mathbb{R}^2\to \mathbb{C}$ is of 
the form $f(x)= \exp(\textup{i} \varphi(x))$ with an unknown real-valued function 
$\varphi$ with known compact support $\mathrm{supp}(\varphi)$. 
For a uniqueness result we refer to Klibanov \cite{klibanov:06}, although
not all assumptions of this theorem are satisfied in the example below. 
It turns out to be particularly
helpful if $\varphi$ has a jump of known magnitude at the boundary of its support.
We will assume that $\mathrm{supp}\,  \varphi = B_{\rho} = \{x\in\mathbb{R}^2:|x|\leq \rho\}$
and that $\varphi \approx \chi_{B_{\rho}}$ close to the boundary $\partial B_{\rho}$ (here
$\chi_{B_{\rho}}$ denotes the characteristic function of $B_{\rho}$). This leads to an inverse
problem where the forward operator is given by 
\begin{align}\label{eq:phaseretrievalop}
\begin{aligned}
F&: H^s(B_{\rho})  \longrightarrow L^\infty(\manifold)\,, \\
& (F\varphi)(\xi) := \left|\int_{B_\rho} e^{-\textup{i}\xi\cdot x} e^{\textup{i}\varphi(x)}\,\mathrm dx\right|^2\,.
\end{aligned}
\end{align}
Here $H^s(B_\rho)$ denotes a Sobolev space with index $s\geq 0$ and 
$\manifold\subset\mathbb{R}^2$ is typically of the form $\manifold = [-\kappa,\kappa]^2$. 
The a priori information on $\varphi$ can be incorporated in the form of an initial guess
$\varphi_0\equiv 1$. Note that the range of $F$ consists of analytic functions.

The problem above occurs in optical imaging: If $f(x')=\exp(\textup{i}\varphi(x')) =u(x',0)$ ($x'=(x_1,x_2)$)
denotes the values of a cartesian component $u$ of an electric field in
the plane $\{x\in\mathbb{R}^3:x_3=0\}$ and $u$ solves the Helmholtz equation
$\Delta u +k^2u=0$ 
and a radiation condition in the half-space $\{x\in\mathbb{R}^3:x_3>0\}$, 
then the intensity $g(x') = |u(x',\Delta)|^2$ of the electric field at a 
measurement plane $\{x\in\mathbb{R}^3: x_3=\Delta\}$ in the limit $\Delta\to \infty$ in the
\emph{Fraunhofer approximation} is given by $|\mathcal{F}_2f|^2$ up to rescaling
(see e.g.\ 
Paganin \cite[Sec. 1.5]{paganin:06}).
If $f$ is generated by a plane incident wave in $x_3$ direction passing through a non-absorbing, weakly scattering object of interest 
in the half-space $\{x_3<0\}$ close to the plane $\{x_3=0\}$ and if the wave
length is small compared to the length scale of the object, 
then the \emph{projection approximation} 
$\varphi(x')\approx \frac{k}{2}\int_{-\infty}^0 (n^2(x',x_3)-1)\,\mathrm dx_3$ is valid 
where  $n$ describes the refractive index of the object of interest 
(see e.g.\ \cite[Sec. 2.1]{paganin:06}). 
A priori information on $\varphi$ concerning a jump 
at the boundary of its support can be obtained by placing a known transparent object
before or behind the object or interest. 

The simulated test object in Figure \ref{fig:phaseretrieval} which represents 
two cells is taken from Giewekemeyer et al.\ \cite{giewekemeyer_etal:11}. 
We choose the initial guess $\varphi_0\equiv 1$, the Sobolev index $s=\frac12$, 
and the regularization parameters $\alpha_n=\frac{5}{10^6}\cdot(2/3)^n$. 
The photon density is approximated by $J=256^2$ bins. 
The offset parameter $\offset$ is initially set to $2\cdot 10^{-6}$ and reduced by 
a factor $\frac45$ in each iteration step. As for the scattering problem, 
we use an oracle stopping rule 
$N := \mathrm{argmin}_n\EW \|\varphi_n-\varphi^{\dagger}\|_{L^2}^2$.  
As already mentioned, we had difficulties to solve the quadratic minimization
problems \eqref{eq:method_impl} by the CG method for small $\alpha_n$ and had to stop
the iterations before residuals were sufficiently small to guarantee a 
reliable solution. 

Nevertheless, comparing subplots (c) and (e)
in Figure \ref{fig:phaseretrieval}, the median KL-reconstruction (e) seems preferable  
(although more noisy) since the contours are sharper and details in the interior of the 
cells are more clearly separated.

\section*{Acknowledgement}
We would like to thank Tim Salditt and Klaus Giewekemeyer for helpful 
discussions and data concerning the phase retrieval problem, 
Patricia Reynaud-Bouret for fruitful discussions on concentration
inequalities, and two anonymous referees for their suggestions,
which helped to improve the paper considerably. 
Financial support by the German Research Foundation DFG through 
SFB 755, the Research Training Group 1023 and the Federal Ministry of 
Education and Research (BMBF) through the project INVERS is gratefully 
acknowledged.

\small
\bibliography{bib}{}
\bibliographystyle{plain}
\end{document}